\begin{document}

\ensubject{fdsfd}

\ArticleType{ARTICLES}
\Year{2023}
\Month{}%
\Vol{}
\No{}
\BeginPage{1} %
\DOI{}
\ReceiveDate{}
\AcceptDate{}
\OnlineDate{}

\title[]{Multiplicity of solutions for semilinear subelliptic Dirichlet
problem}{Multiplicity of solutions for semilinear subelliptic Dirichlet
problem}

\author[1,$\ast$]{Hua Chen}{{chenhua@whu.edu.cn}}
\author[2]{Hong-Ge Chen}{{hongge\_chen@whu.edu.cn}}
\author[1]{Jin-Ning Li}{lijinning@whu.edu.cn}
\author[1]{Xin Liao}{xin\_liao@whu.edu.cn}

\AuthorMark{Chen H}

\AuthorCitation{Chen H, Chen H G, Li J N, Liao X}

\address[1]{School of Mathematics and Statistics, Wuhan University, Wuhan {\rm430072}, China}
\address[2]{Wuhan Institute of Physics and Mathematics,\\
 Innovation Academy for Precision Measurement Science and Technology,\\
  Chinese Academy of Sciences, Wuhan {\rm430071}, China}

\abstract{In this paper, we study the semilinear
subelliptic equation
\[ \left\{
      \begin{array}{cc}
      -\triangle_{X} u=f(x,u)+g(x,u) & \mbox{in}~\Omega, \\[2mm]
      u=0\hfill           & \mbox{on}~\partial\Omega,
      \end{array}
 \right.  \]
where $\triangle_{X}=-\sum_{i=1}^{m}X_{i}^{*}X_{i}$ is the self-adjoint H\"{o}rmander operator associated with vector fields $X=(X_{1},X_{2},\ldots,X_{m})$ satisfying the H\"{o}rmander condition, $f(x,u)\in C(\overline{\Omega}\times \mathbb{R})$, $g(x,u)$ is a Carath\'{e}odory function on $\Omega\times \mathbb{R}$, and $\Omega$ is an open bounded domain in $\mathbb{R}^n$ with smooth boundary. Combining the perturbation from symmetry method with the approaches involving eigenvalue estimate and Morse index in estimating the min-max values, we obtain two kinds of existence results for multiple weak solutions to the problem above. Furthermore, we discuss the difference between the eigenvalue estimate approach and the Morse index approach in degenerate situations. Compared with the classical elliptic cases, both approaches here have their own strengths in the degenerate cases. This new phenomenon implies the results in general degenerate cases would be quite different from the situations in classical elliptic cases.}

\keywords{Degenerate elliptic equations, H\"{o}rmander operators, perturbation method, Morse index}

\MSC{35A15, 35H20, 35J70}

\maketitle

\section{Introduction and main results}
For $n\geq 2$, let $X=(X_{1},X_{2},\ldots,X_{m})$ be a system of real smooth vector fields defined on an open domain $W\subset \mathbb{R}^{n}$. Suppose that $X=(X_{1},X_{2},\ldots,X_{m})$ satisfy the so-called H\"ormander's condition (cf. \cite{hormander1967}) in $W$, i.e., the vector fields $X_{1}, X_{2},\ldots, X_{m}$ together with their commutators up to a certain fixed length span the tangent space at each point of $W$. Then, we denote by $ \triangle_{X}:=-\sum_{i=1}^{m}X_{i}^{*}X_{i}$ the H\"ormander operator associated with vector fields $X=(X_{1},X_{2},\ldots,X_{m})$, where $X_{i}^{*}$ is the formal adjoint of $X_{i}$.

As an important class of degenerate elliptic operators, the H\"{o}rmander operators have received considerable attention
over the last fifty years. The motivations in studying H\"{o}rmander operators originated from its wide applications among many different areas not only in PDEs but also in sub-Riemannian geometry, systems of stochastic differential equations, the theory of functions of several complex variables, geometric control theory and nonholonomic mechanics, see \cite{Montgomery2002,Bell1996,Jean2014,Hormander1973}. Through the work of several mathematicians, many remarkable results have been acquired, such as subelliptic estimates, Poincar\'{e} inequalities, Sobolev embedding results, Harnack's inequalities, size estimates of the Green kernels and heat kernels, etc. Here we mention the works \cite{hormander1967,Stein1976,Jerison1986duke,Yung2015,Citti1993,Sanchez-calle1984,Stein1985, Jerison1986,Jerison1987,MB2014,Capogna1993,Capogna1994,Capogna1996,Cohn2007,Derridj1971,Derridj1972,Garofalo1996,Jost1998} and the references therein.

In this paper, we study the semilinear subelliptic Dirichlet problem
\begin{equation}\label{problem1-1}
\left\{
      \begin{array}{cc}
      -\triangle_{X} u=f(x,u)+g(x,u) & \mbox{in}~\Omega, \\[2mm]
      u=0\hfill           & \mbox{on}~\partial\Omega.
      \end{array}
 \right.
\end{equation}
Here $\Omega\subset\subset W$ is a bounded
  open domain with smooth boundary, and $f\in  C(\overline{\Omega}\times\mathbb{R})$ is a continuous function satisfying the following assumptions:
   \begin{enumerate}

            \item [(H.1)] $f$ is odd with respect to variable $u$, i.e.~ $f(x,-u)=-f(x,u)$.

          \item [(H.2)] There exist $2<p<2_{\tilde{\nu}}^{*}:=\frac{2\tilde{\nu}}{\tilde{\nu}-2}$ and $C>0$ such that
         \[ |f(x,u)|\leq C(1+|u|^{p-1}) \]
   holds for all $x\in \overline{\Omega}$ and all $u\in \mathbb{R}$, where $\tilde{\nu}\geq 3 $ is the generalized M\'{e}tivier index (also called the non-isotropic dimension of $\Omega$ related to the vector fields $X$) defined in Definition \ref{def2-1}  below.

   \item [(H.3)] There exist $\mu>2$ and $R_0>0$ such that
\[0<\mu F(x,u)\leq f(x,u)u\]
   holds for all  $x\in\overline{\Omega}$ and all $u\in \mathbb{R}$ with $|u|\geq R_0$, where $F(x,u)=\int_{0}^{u}f(x,v)dv$ is the primitive of $f(x,u)$.
\end{enumerate}
  Moreover, we suppose that $g:\Omega\times \mathbb{R}\to \mathbb{R}$ is a Carath\'{e}odory function, i.e., $g(x,\cdot)$ is a continuous function  for almost everywhere $x\in \Omega$, and $g(\cdot,u)$ is a measurable function for all $u\in \mathbb{R}$. Additionally,
   \begin{enumerate}
            \item [(H.4)] There exist $0\leq\sigma<\mu-1$ and  $\beta\geq 0$ such that
         \[ |g(x,u)|\leq \alpha(x)+\beta|u|^{\sigma} \]
       holds  for all $x\in \Omega, ~u\in \mathbb{R}$, and some non-negative $\alpha(x) \in L^{\frac{\mu}{\mu-1}}(\Omega)$.
\end{enumerate}

If $X=(\partial_{x_{1}},\ldots,\partial_{x_{n}})$, $\triangle_{X}$ coincides with the classical Laplacian $\triangle$ and the generalized M\'{e}tivier index $\tilde{\nu}=n$. The classical semilinear elliptic Dirichlet problems have been intensively studied over the past half-century. In 1981,  Bahri-Berestycki  \cite{Bahri-Berestycki1981} first investigated the problem \eqref{problem1-1} with $f(x,u)=|u|^{p-2}u$ and $g(x,u)=g(x)\in L^2(\Omega)$, and they proved the problem \eqref{problem1-1} has infinitely many weak solutions provided
\begin{equation}\label{1-1-12}
  2<p<\frac{5n-2+\sqrt{9n^{2}-4n+4}}{4(n-1)}<\frac{2n}{n-2}.
\end{equation}
In addition, if $2<p<\frac{2n}{n-2}$, Bahri \cite{Bahri1981} showed that
the problem \eqref{problem1-1} has infinitely many weak solutions for a residual set of $g$ in $H^{-1}(\Omega)$. Meanwhile, similar multiplicity results were obtained by Struwe \cite{Struwe1980} and Dong-Li \cite{Dong1982} via the approach in the spirit of symmetric mountain pass theorem. Then, by the perturbation from symmetry method, Rabinowitz \cite{Rabinowitz1982,Rabinowitz1986} studied the problem \eqref{problem1-1} for general function $f$ under the assumptions (H.1)-(H.3) and $g(x,u)=g(x)\in L^2(\Omega)$. More precisely, he established the multiplicity of weak solutions under
\begin{equation}\label{1-1-13}
  2<p<\frac{\mu n+(\mu-1)(n+2)}{\mu n+(\mu-1)(n-2)}+1<\frac{2n}{n-2},
\end{equation}
where the admissible range of index $p$ in \eqref{1-1-13} was carried out by carefully estimates on min-max values obtained through the lower bound estimate $\lambda_k\geq Ck^{\frac{2}{n}}$ on $k$-th Dirichlet eigenvalue $\lambda_{k}$ of Laplacian. Furthermore, Bahri-Lions \cite{Bahri1988} improved these multiplicity results and dealt with the general perturbation terms by combining the perturbation method with the sharp estimates on min-max values obtained through the Cwikel-Lieb-Rozenblum inequality and an estimate for the Morse index of the corresponding critical points. For example, if $f(x,u)=|u|^{p-2}u$ and $g(x,u)=g(x)\in L^2(\Omega)$, \cite{Bahri1988} gave the multiplicity of weak solutions for
\begin{equation}\label{1-1-14}
  2<p<\frac{2n-2}{n-2}<\frac{2n}{n-2}.
\end{equation}
Observing that $\mu=p$ when $f(x,u)=|u|^{p-2}u$, the admissible range of $p$ given by \eqref{1-1-13} is the same with \eqref{1-1-12}, and smaller than \eqref{1-1-14} since  $\frac{5n-2+\sqrt{9n^{2}-4n+4}}{4(n-1)}<\frac{2n-2}{n-2}$ for $n\geq 3$. It turns out that the Morse index approach on estimates of min-max values usually yields better results in studying the multiple weak solutions of problem \eqref{problem1-1} for classical Laplacian. For more multiplicity results related to the classical semilinear elliptic equations with perturbation terms, one can see \cite{Struwe2000,Sqassina2006,Bahri1992,Tanaka1989} and the references therein.

In recent years, the study of nonlinear degenerate elliptic equations on sub-Riemannian manifolds has become an active area in PDEs and geometry analysis. In the degenerate case, $\triangle_{X}$ is a subelliptic operator and the generalized M\'{e}tivier index $\tilde{\nu}>n\geq 2$. For the vector fields $X$ under the M\'etivier's condition (see \cite{Metivier1976} and Remark \ref{remark2-1} below), the existence and multiplicity of weak solutions for the nonlinear degenerate elliptic equations involving H\"{o}rmander vector fields have been treated by Jerison-Lee \cite{Jerison-lee1987}, Garofalo-Lanconelli \cite{Garofalo1992}, Xu \cite{Xu1994,Xu1995,Xu1996}, Xu-Zuily \cite{Xu1997}, Citti \cite{Citti1995}, Maalaoui-Martino \cite{Maalaoui2013}, Loiudice \cite{Loiudice2007} and Venkatesha Murthy \cite{Murthy2008}.
 However, when we deal with the general cases without M\'etivier's condition, the lack of degenerate Rellich-Kondrachov compact embedding results and precise lower bound of Dirichlet eigenvalues caused many obstacles
over a long period in the past. As far as we know, there are only few results in this situation. The multiplicity of weak solutions for
problem \eqref{problem1-1} with perturbation term have been studied by Luyen-Tri \cite{Luyen2019} for some special Grushin type operators. In addition, Chen-Chen-Yuan \cite{Chen-Chen-Yuan2022} generalized Rabinowitz's classical results in \cite{Rabinowitz1982,Rabinowitz1986} to general degenerate subelliptic operators and gave the multiplicity results of problem \eqref{problem1-1} with the free perturbation term $g(x,u)=g(x)\in L^{2}(\Omega)$. Specifically, if the boundary $\partial\Omega$ is non-characteristic for $X$ (i.e. for any $x_{0}\in\partial\Omega$, there exists at least one vector field $X_{j_{0}}~(1\leq j_0\leq m)$ such that $X_{j_{0}}(x_{0})\notin T_{x_{0}}(\partial\Omega)$), \cite{Chen-Chen-Yuan2022} proved the problem \eqref{problem1-1} possesses a sequence of weak solutions if nonlinear term $f$ satisfies assumptions (H.1)-(H.3) and
\begin{equation}\label{1-1-5}
  2<p< \frac{\mu \tilde{\nu}+(\mu-1)(\tilde{\nu}+2)}{\mu \tilde{\nu}+(\mu-1)(\tilde{\nu}-2)}+1<\frac{2\tilde{\nu}}{\tilde{\nu}-2}.
\end{equation}

It is worth pointing out that the admissible range of index $p$ in \eqref{1-1-5} depends on the explicit lower bound $\lambda_k\geq Ck^{\frac{2}{\tilde{\nu}}}$ for $k$-th Dirichlet eigenvalue $\lambda_{k}$ of H\"{o}rmander operator $\triangle_{X}$ obtained in \cite{Chen2021} in which the non-characteristic condition for $\partial\Omega$ would be also assumed. However, in the absence of M\'{e}tivier condition, the Weyl type asymptotic formula of Dirichlet eigenvalues for general H\"{o}rmander operator has not yet been fully understood, and such lower bound of $\lambda_k$ may be imprecise in some degenerate cases. For example, the recent study \cite{Chen-Chen-Li2022} proved that for a class of homogeneous H\"{o}rmander operators on general smooth domain  without the non-characteristic condition, the Dirichlet eigenvalue $\lambda_{k}$ admits the asymptotic behaviour  $\lambda_k\approx k^{\frac{2}{\vartheta}}(\ln k)^{-\frac{2d}{\vartheta}}$ as $k\to+\infty$, where $n\leq \vartheta\leq \tilde{\nu}$ is a positive rational number and $0\leq d\leq n-1$ is an integer. In view of this asymptotic behaviour, it is extremely interesting to analyse the effects of the lower bound of $\lambda_{k}$ on the admissible range of index $p$. One can find some new situations that never happen in non-degenerate cases.

The present work is devoted to investigating the admissible range of index $p$ such that the semilinear subelliptic Dirichlet problem \eqref{problem1-1}  preserves a sequence of weak solutions. First, under a general setting on the lower bound of Dirichlet eigenvalues, we have

\begin{theorem}\label{thm1}
Let $X=(X_{1},X_{2},\ldots,X_{m})$ be the  real smooth vector fields defined on an open domain
  $W\subset\mathbb{R}^n$ and satisfy the H\"{o}rmander's condition in $W$.  Assume that $\Omega\subset\subset W$ is a bounded open domain with smooth boundary. Denoting by $\lambda_k$  the $k$-th Dirichlet eigenvalue of $-\triangle_{X}$ on $\Omega$, we suppose $\lambda_k$ admits the following lower bound:
\[ \lambda_k\geq Ck^{\frac{2}{\vartheta}}(\ln k)^{-\kappa}~~~\mbox{for sufficient large}~~k,\leqno(L)\]
 where $n\leq\vartheta\leq \tilde{\nu}$, $\kappa\geq 0$ and $C>0$ are the constants depending on $X$ and $\Omega$. If the functions $f$ and $g$ in the problem \eqref{problem1-1} satisfy the assumptions $(H.1)$-$(H.4)$ and
\[ \frac{2p}{\vartheta(p-2)}-\frac{\tilde{\nu}}{\vartheta}>\frac{\mu}{\mu-\sigma-1},\leqno(A1)\]
then the problem \eqref{problem1-1} possesses an unbounded sequence of weak solutions in $H_{X,0}^{1}(\Omega)$, where $H_{X,0}^{1}(\Omega)$ is the function space defined in Section \ref{Section2} below.
\end{theorem}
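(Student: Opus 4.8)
The plan is to realize weak solutions of \eqref{problem1-1} as critical points of the energy functional
\[
I(u)=\frac12\int_{\Omega}|Xu|^{2}\,dx-\int_{\Omega}F(x,u)\,dx-\int_{\Omega}G(x,u)\,dx ,\qquad G(x,u):=\int_{0}^{u}g(x,v)\,dv ,
\]
on $E:=H_{X,0}^{1}(\Omega)$. By (H.1) the functional $J_{1}(u):=\frac12\int_{\Omega}|Xu|^{2}dx-\int_{\Omega}F(x,u)dx$ is even, while $\int_{\Omega}G(x,u)dx$ destroys the $\mathbb{Z}_{2}$-symmetry, so I would follow the perturbation-from-symmetry scheme of Rabinowitz \cite{Rabinowitz1982,Rabinowitz1986}, adapted to the subelliptic setting as in Chen--Chen--Yuan \cite{Chen-Chen-Yuan2022}. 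Fix a Hilbert basis $\{\varphi_{j}\}_{j\ge1}$ of $E$ consisting of Dirichlet eigenfunctions of $-\triangle_{X}$, with $\|\varphi_{j}\|_{L^{2}}=1$, and set $E_{j}=\mathrm{span}\{\varphi_{1},\dots,\varphi_{j}\}$, $E_{j}^{\perp}=\overline{\mathrm{span}}\{\varphi_{j+1},\varphi_{j+2},\dots\}$. First I would record the analytic consequences of the hypotheses: (H.3) yields $F(x,u)\ge a_{1}|u|^{\mu}-a_{2}$ (in particular $F\ge -a_{2}$) and $\frac1\mu f(x,u)u-F(x,u)\ge -a_{2}$; (H.2) gives $|F(x,u)|\le C(1+|u|^{p})$ with $2<\mu\le p<2_{\tilde{\nu}}^{*}$; (H.4) together with H\"older's inequality and $\sigma+1<\mu$ gives $\bigl|\int_{\Omega}G(x,u)dx\bigr|\le C\bigl(1+\|u\|_{L^{\mu}}^{\sigma+1}\bigr)$; and I would use the compact embedding $E\hookrightarrow\hookrightarrow L^{q}(\Omega)$ for $2\le q<2_{\tilde{\nu}}^{*}$ from Section \ref{Section2}.

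Next I would introduce a suitable cut-off function and the associated modified functional $\widetilde{I}\in C^{1}(E,\mathbb{R})$, which coincides with $I$ on the region where $\bigl|\int_{\Omega}G\,dx\bigr|$ is small relative to the energy, and which enjoys: (i) $\widetilde{I}$ satisfies the Palais--Smale condition above some level $\bar c$ (here $\sigma<\mu-1$ and the compact embedding are used, exactly as in the boundedness estimate for Palais--Smale sequences of $I$); (ii) every critical point $u$ of $\widetilde{I}$ with $\widetilde{I}(u)>\bar c$ is a weak solution of \eqref{problem1-1}; (iii) $\widetilde{I}$ is almost even, $|\widetilde{I}(u)-\widetilde{I}(-u)|\le C\bigl(1+|\widetilde{I}(u)|\bigr)^{\gamma}$ for all $u\in E$, with $\gamma:=\frac{\sigma+1}{\mu}\in(0,1)$ --- this is where (H.3)--(H.4) enter decisively, via $\bigl|\int_{\Omega}G\,dx\bigr|\le C(1+\|u\|_{L^{\mu}}^{\sigma+1})$ combined with the bound $\|u\|_{L^{\mu}}^{\mu}\le C(1+|\widetilde{I}(u)|)$ valid where $\widetilde{I}=I$, obtained from the Euler relation and $F\ge a_{1}|u|^{\mu}-a_{2}$. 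Using Rabinowitz's minimax class $\Gamma_{j}$ adapted to the broken symmetry (half-discs in $E_{j}\oplus\mathbb{R}\varphi_{j+1}$, identity on the relevant boundary faces, oddness on the symmetric face), I would define the perturbed minimax values $b_{j}=\inf_{h\in\Gamma_{j}}\max_{u\in D_{j}}\widetilde{I}(h(u))$ and the symmetric minimax values $c_{j}$ of $J_{1}$, and show $b_{j}\ge c_{j}-C(1+c_{j}^{\gamma})$, hence $b_{j}\ge\frac12 c_{j}$ for $j$ large, and $b_{j}\to+\infty$.

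The heart of the matter is the two-sided control of $b_{j}$. For the lower bound I would estimate $c_{j}$ directly: on $E_{j-1}^{\perp}$ one has $\|u\|_{L^{2}}^{2}\le\lambda_{j}^{-1}\|Xu\|_{L^{2}}^{2}$, so interpolating the $L^{p}$-norm between $L^{2}$ and $L^{2_{\tilde{\nu}}^{*}}$ (via the subelliptic Sobolev embedding) gives $\|u\|_{L^{p}}\le C\lambda_{j}^{-(1-\theta)/2}\|Xu\|_{L^{2}}$ with $\theta=\frac{\tilde{\nu}(p-2)}{2p}\in(0,1)$; the intersection lemma and optimization of $\rho=\|Xu\|_{L^{2}}$ in $J_{1}(u)\ge\frac12\rho^{2}-C\rho^{p}\lambda_{j}^{-(1-\theta)p/2}-C$ on $S_{\rho}\cap E_{j-1}^{\perp}$ then yield
\[
b_{j}\ \ge\ c\,\lambda_{j}^{\,\frac{p}{p-2}-\frac{\tilde{\nu}}{2}}-C\ \ge\ c\,j^{\,\frac{2p}{\vartheta(p-2)}-\frac{\tilde{\nu}}{\vartheta}}(\ln j)^{-\kappa\left(\frac{p}{p-2}-\frac{\tilde{\nu}}{2}\right)}-C ,
\]
where (H.2) guarantees $\frac{p}{p-2}-\frac{\tilde{\nu}}{2}>0$ and the last step uses $(L)$. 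For the upper bound I would run the Rabinowitz iteration: if only finitely many $b_{j}$ were critical values of $\widetilde{I}$, then for all large $j$ a deformation argument exploiting property (iii) forces $b_{j+1}\le b_{j}+M(b_{j}^{\gamma}+1)$, and an elementary induction gives $b_{j}\le A\,j^{1/(1-\gamma)}=A\,j^{\frac{\mu}{\mu-\sigma-1}}$. But $(A1)$ says precisely $\frac{2p}{\vartheta(p-2)}-\frac{\tilde{\nu}}{\vartheta}>\frac{\mu}{\mu-\sigma-1}$, so for large $j$ the lower bound strictly exceeds the upper bound (the logarithmic factor being harmless against strictly ordered polynomial powers), a contradiction. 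Hence infinitely many $b_{j}$ are critical values of $\widetilde{I}$; by (i)--(ii) these give weak solutions $u_{j}$ of \eqref{problem1-1} with $I(u_{j})=b_{j}\to+\infty$, and since $I(u_{j})\le\frac12\|Xu_{j}\|_{L^{2}}^{2}+C+C\|Xu_{j}\|_{L^{2}}^{\sigma+1}$ one concludes $\|u_{j}\|_{H_{X,0}^{1}}\to\infty$, i.e.\ \eqref{problem1-1} has an unbounded sequence of weak solutions in $H_{X,0}^{1}(\Omega)$.

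I expect the main obstacle to be the construction and analysis of the modified functional together with the iteration (the second step and the upper bound in the third): one must design the truncation so that (i), (ii) and (iii) hold \emph{simultaneously}, keep the symmetry-defect exponent sharp at $\gamma=\frac{\sigma+1}{\mu}$ under the weak growth restriction $\sigma<\mu-1$ on $g$ only, and carry out the deformation lemma in the subelliptic space $H_{X,0}^{1}(\Omega)$. The genuinely new ingredient, by contrast with the classical elliptic situation, is propagating the eigenvalue lower bound $(L)$ --- with its possible logarithmic loss --- through the interpolation of the subelliptic Sobolev embedding, which is exactly what generates the exponent $\frac{2p}{\vartheta(p-2)}-\frac{\tilde{\nu}}{\vartheta}$ and hence the precise shape of $(A1)$.
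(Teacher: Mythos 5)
Your proposal is correct and follows essentially the same route as the paper: Rabinowitz's perturbation-from-symmetry with a truncated functional whose symmetry defect carries the exponent $\frac{\sigma+1}{\mu}$, Palais--Smale and ``high critical points of the modified functional are critical points of $E$'' lemmas, and a min-max lower bound obtained on $W_{k-1}^{\perp}$ by interpolating $L^{p}$ between $L^{2}$ and $L^{2_{\tilde{\nu}}^{*}}$ and inserting the eigenvalue bound $(L)$, exactly as in Lemmas \ref{lemma3-2}--\ref{lemma3-5} and Propositions \ref{prop3-3}--\ref{prop3-5}. The only cosmetic difference is the endgame: you turn the recursion $b_{j+1}\le b_{j}+M(b_{j}^{\frac{\sigma+1}{\mu}}+1)$ into the polynomial bound $b_{j}\lesssim j^{\frac{\mu}{\mu-\sigma-1}}$ and compare exponents, whereas the paper iterates multiplicatively and uses $(A1)$ together with the lower bound to get $\sum_{k}b_{k}^{\frac{\sigma+1-\mu}{\mu}}<\infty$, hence boundedness of $b_{k}$, contradicting $b_{k}\to\infty$; the two arguments are equivalent under $(A1)$.
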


\begin{remark}
It is worth emphasizing that, compared with the result in \cite{Chen-Chen-Yuan2022},  the main result of Theorem \ref{thm1} in this paper would be more interesting and reasonable since we study problem \eqref{problem1-1} without the non-characteristic condition for boundary $\partial\Omega$. In this case, we need to prove Friedrichs-Poincar\'{e} type inequality related to H\"ormander's vector fields for general bounded domain $\Omega$ with smooth boundary (see Proposition \ref{Poincare} below) and use a sharp lower bound of Dirichlet eigenvalues in the form $\lambda_{k}\geq C k^{\frac{2}{\vartheta}}(\ln k)^{-\frac{2d}{\vartheta}}$ for a wide class of homogeneous H\"{o}rmander operators in the recent result of  \cite{Chen-Chen-Li2022}. We remark that the homogeneous H\"{o}rmander operators include a lot of well-known degenerate operators, such as Grushin operators, Bony operators, Martinet operators, etc (see examples in \cite{Chen-Chen-Li2022}). Note that the inequality $(A1)$ in Theorem \ref{thm1} is equivalent to
\begin{equation}\label{1-1-16}
2<p<\frac{\vartheta\mu+(\tilde{\nu}+2)(\mu-\sigma-1)}{\vartheta\mu+(\tilde{\nu}-2)(\mu-\sigma-1)}+1<\frac{2\tilde{\nu}}{\tilde{\nu}-2}.
\end{equation}
Since $n\leq \vartheta\leq \tilde{\nu}$, we have
\[ \frac{\tilde{\nu}\mu+(\tilde{\nu}+2)(\mu-\sigma-1)}{\tilde{\nu}\mu+(\tilde{\nu}-2)(\mu-\sigma-1)}\leq \frac{\vartheta\mu+(\tilde{\nu}+2)(\mu-\sigma-1)}{\vartheta\mu+(\tilde{\nu}-2)(\mu-\sigma-1)}.\]
Therefore, Theorem \ref{thm1} may derive a wider admissible range of index $p$ than the results in  \cite{Chen-Chen-Yuan2022, Luyen2019}.
\end{remark}

Secondly, according to the Morse index approach and degenerate Cwikel-Lieb-Rozenblum inequality, we can obtain the following result without the condition $(L)$.
\begin{theorem}\label{thm2}
Let $X=(X_{1},X_{2},\ldots,X_{m})$ be the real smooth vector fields defined on an open domain
  $W\subset\mathbb{R}^n$, which satisfy the H\"{o}rmander's condition in $W$.  Suppose that  $\Omega\subset\subset W$ is a bounded open domain with smooth boundary. Assume that the functions $f$ and $g$ in the problem \eqref{problem1-1} satisfy the assumptions (H.1)-(H.4) and
\[ \frac{2p}{\tilde{\nu}(p-2)}>\frac{\mu}{\mu-\sigma-1},\leqno(A2)\]
then the problem \eqref{problem1-1} possesses an unbounded sequence of weak solutions in $H^{1}_{X,0}(\Omega)$.
\end{theorem}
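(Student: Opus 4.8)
The plan is to carry out the perturbation-from-symmetry argument for \eqref{problem1-1} exactly as for Theorem \ref{thm1}, but to replace the use of the eigenvalue lower bound $(L)$ by an estimate of the Morse index of the min-max critical points, bounded in turn by the degenerate Cwikel--Lieb--Rozenblum inequality. I work in $H:=H_{X,0}^{1}(\Omega)$ with $\|u\|^{2}=\int_{\Omega}|Xu|^{2}\,dx$, which by the Friedrichs--Poincar\'e inequality (Proposition \ref{Poincare}) is an equivalent norm, and I set $F(x,u)=\int_{0}^{u}f(x,v)\,dv$, $G(x,u)=\int_{0}^{u}g(x,v)\,dv$,
\[ I(u)=\tfrac12\|u\|^{2}-\int_{\Omega}F(x,u)\,dx-\int_{\Omega}G(x,u)\,dx . \]
Since $f$ is only continuous, $I$ is merely of class $C^{1}$; to have a Morse index at my disposal I first replace $f$ by odd $C^{1}$ functions fulfilling (H.2)--(H.3) with constants uniform in the approximation parameter and converging to $f$ locally uniformly, run the whole scheme for the resulting $C^{2}$ functionals with estimates uniform in this parameter, and pass to the limit at the end (I suppress the regularization in the notation). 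Because $g$ destroys the oddness, following Rabinowitz \cite{Rabinowitz1982,Rabinowitz1986} I introduce the modified functional $\widetilde{I}$ obtained by multiplying $\int_{\Omega}G(x,u)\,dx$ by a smooth cut-off that is active only where $\bigl|\int_{\Omega}G(x,u)\,dx\bigr|$ is large compared with $1+|\widetilde{I}(u)|$. Exactly as in \cite{Rabinowitz1982,Chen-Chen-Yuan2022}, $\widetilde{I}\in C^{2}$, it agrees with $I$ near every critical point with large critical value (so such critical points solve \eqref{problem1-1}), and the loss of evenness obeys
\[ \bigl|\widetilde{I}(u)-\widetilde{I}(-u)\bigr|\le C\bigl(1+|\widetilde{I}(u)|\bigr)^{\frac{\sigma+1}{\mu}},\qquad \tfrac{\sigma+1}{\mu}<1\ \text{ by (H.4)}. \]
Using (H.2)--(H.4) and the subelliptic Rellich--Kondrachov embedding $H\hookrightarrow\hookrightarrow L^{q}(\Omega)$ for $q<2_{\tilde\nu}^{*}$, I verify that $\widetilde{I}$ satisfies the Palais--Smale condition.

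I then set up the symmetric min-max values. With $E_{1}\subset E_{2}\subset\cdots$ spanned by the Dirichlet eigenfunctions of $-\triangle_{X}$, $D_{m}=\overline{B_{R_{m}}}\cap E_{m}$ for $R_{m}$ large enough that $\widetilde{I}<0$ on $\partial D_{m}$, and
\[ \Gamma_{m}=\{\,h\in C(D_{m},H):\ h\ \text{odd},\ h|_{\partial D_{m}}=\mathrm{id}\,\},\qquad c_{m}=\inf_{h\in\Gamma_{m}}\ \sup_{u\in D_{m}}\widetilde{I}(h(u)), \]
one has $c_{m}\to+\infty$. Arguing by contradiction, suppose \eqref{problem1-1} has only finitely many weak solutions; then the deformation lemma together with the evenness-defect estimate above gives, as in Theorem \ref{thm1} (cf.\ \cite{Rabinowitz1982,Chen-Chen-Yuan2022}), the upper bound $c_{m}\le C\bigl(1+m^{\mu/(\mu-\sigma-1)}\bigr)$ for all large $m$, while the refinement of the deformation argument in the spirit of Bahri--Lions \cite{Bahri1988} produces, for each large $m$, a critical point $u_{m}$ of $\widetilde{I}$ at level $c_{m}$ with augmented Morse index $\mathrm{ind}(u_{m})+\mathrm{nul}(u_{m})\ge m$.

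The heart of the matter is a matching lower bound for $c_{m}$. From (H.2)--(H.3) and $\langle I'(u_{m}),u_{m}\rangle=0$ one obtains the a priori bounds $\|u_{m}\|^{2}\le C(1+c_{m})$ and $\|u_{m}\|_{L^{p}(\Omega)}^{p}\le C(1+c_{m})$. The second variation is $\widetilde{I}''(u_{m})=-\triangle_{X}-W_{m}$, where $W_{m}=f_{u}(x,u_{m})$ up to a bounded correction coming from the cut-off, so (H.2) for the approximating family gives $W_{m}\le C(1+|u_{m}|^{p-2})$; since $2<p<2_{\tilde\nu}^{*}$ forces $q:=\tfrac{(p-2)\tilde\nu}{2}<p$, H\"older's inequality on the bounded domain $\Omega$ yields
\[ \int_{\Omega}(W_{m})_{+}^{\tilde\nu/2}\,dx\le C\bigl(1+\|u_{m}\|_{L^{q}(\Omega)}^{q}\bigr)\le C\bigl(1+\|u_{m}\|_{L^{p}(\Omega)}^{q}\bigr)\le C\bigl(1+c_{m}^{q/p}\bigr). \]
By the degenerate Cwikel--Lieb--Rozenblum inequality (valid since $\tilde\nu\ge3$) the number of non-positive eigenvalues of $\widetilde{I}''(u_{m})$ is at most $C\bigl(1+\int_{\Omega}(W_{m})_{+}^{\tilde\nu/2}\,dx\bigr)$, hence
\[ m\le\mathrm{ind}(u_{m})+\mathrm{nul}(u_{m})\le C\bigl(1+c_{m}^{q/p}\bigr),\qquad\text{i.e.}\qquad c_{m}\ge c\,m^{\frac{p}{q}}=c\,m^{\frac{2p}{\tilde\nu(p-2)}} \]
for all large $m$. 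Comparing with the upper bound of the previous step, the hypothesis (A2), namely $\tfrac{2p}{\tilde\nu(p-2)}>\tfrac{\mu}{\mu-\sigma-1}$, makes these two estimates incompatible once $m$ is large --- the desired contradiction. Thus \eqref{problem1-1} admits infinitely many weak solutions in $H_{X,0}^{1}(\Omega)$ (passing to the limit in the regularization with the uniform bounds), and since their critical values diverge while $\|u_{m}\|^{2}$ is comparable to $I(u_{m})$, the corresponding sequence is unbounded in $H_{X,0}^{1}(\Omega)$.

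The abstract min-max bookkeeping is identical to that behind Theorem \ref{thm1}, so the genuine difficulties are the analytic ingredients special to the degenerate setting: establishing the degenerate Cwikel--Lieb--Rozenblum inequality for $-\triangle_{X}$ on a general smooth bounded domain (with no M\'etivier condition and no non-characteristic hypothesis), which rests on Gaussian-type heat-kernel and trace bounds relative to the Carnot--Carath\'eodory distance and the non-isotropic dimension $\tilde\nu$, and the subelliptic Rellich--Kondrachov compactness underlying the Palais--Smale condition. A further delicate point is to make the Morse-index machinery --- which requires a $C^{2}$ functional --- operational for a merely continuous $f$: one must arrange the $C^{1}$ regularization so that the linearized potentials remain bounded in $L^{\tilde\nu/2}(\Omega)$ uniformly, and so that the sharp a priori bound $\|u_{m}\|_{L^{p}(\Omega)}^{p}\le C(1+c_{m})$ --- which is precisely what produces the exponent $\tfrac{2p}{\tilde\nu(p-2)}$ in (A2) rather than a weaker one --- survives the limit. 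Finally, observe that this exponent involves only the dimension $\tilde\nu$ and no growth rate of $\lambda_{k}$, which is exactly why Theorem \ref{thm2} dispenses with an assumption of the form $(L)$.
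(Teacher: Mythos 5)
Your overall idea --- obtain a lower bound of order $k^{\frac{2p}{\tilde\nu(p-2)}}$ on the symmetric min-max levels via an augmented Morse index estimate combined with the degenerate Cwikel--Lieb--Rozenblum inequality, and play it against the $k^{\frac{\mu}{\mu-\sigma-1}}$-type growth forced by the perturbation argument --- is indeed the mechanism behind Theorem \ref{thm2}. But the route you take to the index estimate has a genuine gap: you apply the Morse-index machinery to the perturbed functional $\widetilde I$ (the paper's $E_1$) itself, which is only $C^1$ here, since $f$ is merely continuous and, worse, $g$ is only a Carath\'eodory function. Your fix is to regularize $f$ and ``pass to the limit at the end'', but this is not a routine step: the Bahri--Lions/Tanaka construction of critical points at level $c_m$ with augmented Morse index at least $m$ is carried out for the regularized functionals, whose critical points are not solutions of \eqref{problem1-1}, while your contradiction hypothesis (finitely many solutions) concerns the original problem; to close the loop you would need uniform-in-regularization two-sided control of the min-max values and a compactness argument sending approximate critical points at diverging levels to genuine solutions, none of which is supplied. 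Moreover your identification $\widetilde I''(u_m)=-\triangle_X-W_m$ with $W_m=f_u(x,u_m)$ ``up to a bounded correction'' is not available: under (H.1)--(H.4) neither $f_u$ nor $g_u$ exists, and even after smoothing, the second derivative of the cut-off term $\psi(u)\int_\Omega G(x,u)\,dx$ contributes $\psi(u)g_u(x,u)$ plus nonlocal (finite-rank) terms, and (H.4) gives no control of $g_u$ compatible with the $L^{\tilde\nu/2}$ bound needed for Proposition \ref{CLR}.

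The paper avoids all of this by never differentiating $E_1$ twice. It introduces the smooth comparison functional $I_p(u)=\frac12\int_\Omega|Xu|^2dx-\frac Bp\int_\Omega|u|^pdx$ in \eqref{4-1}, which is $C^2$ (Proposition \ref{prop4-1}) and even, applies Tanaka's theorem to $I_p$ to produce critical points $v_k$ with $I_p(v_k)\le e_k$ and $m^*(v_k)\ge k$ (Proposition \ref{prop4-6}), and then uses Proposition \ref{CLR} with the explicit potential $-(p-1)B|v_k|^{p-2}$ to get $e_k\ge C_1k^{\frac{2p}{\tilde\nu(p-2)}}$. The transfer to the actual min-max values is purely variational: the upper growth bound in (H.2) gives $E_1(u)\ge\frac12 I_p(u)-\widehat{C_1}$ for a suitable $B$, hence $b_k\ge\frac12 e_k-\widehat{C_1}$ (Proposition \ref{prop4-7}), and the conclusion follows by rerunning the bookkeeping of Theorem \ref{thm1} with Proposition \ref{prop3-4} replaced by Proposition \ref{prop4-7}, rather than by your contradiction with finitely many solutions. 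If you want to salvage your version, you would have to either carry out the regularization-and-limit program in detail (including uniform index and level estimates and the treatment of the $g$-term), or switch to the paper's comparison-functional device, which renders those difficulties moot.
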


\begin{remark}
Inequality $(A2)$ is equivalent to
\begin{equation}\label{1-1-17}
2<p<\frac{\mu\tilde{\nu}+2(\mu-\sigma-1)}{\mu\tilde{\nu}-2(\mu-\sigma-1)}+1<\frac{2\tilde{\nu}}{\tilde{\nu}-2}.
\end{equation}
For $n\leq \vartheta\leq \tilde{\nu}$, comparing \eqref{1-1-16} with \eqref{1-1-17}, we have
\begin{equation}\label{1-1-18}
\begin{aligned}
&\frac{\vartheta\mu+(\tilde{\nu}+2)(\mu-\sigma-1)}{\vartheta\mu+(\tilde{\nu}-2)(\mu-\sigma-1)}-\frac{\mu\tilde{\nu}+2(\mu-\sigma-1)}{\mu\tilde{\nu}-2(\mu-\sigma-1)}\\
&=\frac{4(\mu-\sigma-1)((\sigma+1)\tilde{\nu}-\vartheta\mu)}{(\vartheta\mu+(\tilde{\nu}-2)(\mu-\sigma-1))(\mu\tilde{\nu}-2(\mu-\sigma-1))}.
\end{aligned}
\end{equation}
If $(\sigma+1)\tilde{\nu}<\vartheta \mu$ (e.g. in case of $\vartheta=\tilde{\nu}$), \eqref{1-1-18} implies the admissible range of index $p$ given by Theorem \ref{thm2} is wider than that in Theorem \ref{thm1} since the  supremum of $p$ in \eqref{1-1-17} is closer to the critical index $\frac{2\tilde{\nu}}{\tilde{\nu}-2}$. In this case,
 Theorem \ref{thm2} improves  Theorem \ref{thm1} (also improves the results in \cite{Chen-Chen-Yuan2022,Luyen2019}), which is analogous to the classical elliptic case (cf. \cite{Bahri1988,Rabinowitz1982,Rabinowitz1986}).
However, unlike the classical elliptic cases, there exists some degenerate examples (e.g. Example \ref{ex5-1} below) satisfying $(\sigma+1)\tilde{\nu}>\vartheta \mu$, which indicates the admissible range of index $p$ given in \eqref{1-1-16} will be wider than that in \eqref{1-1-17}  sometimes, and Theorem \ref{thm2} cannot cover Theorem \ref{thm1}. From this point of view, we know that both approaches in Theorem \ref{thm1} and Theorem \ref{thm2}, respectively, have their own strengths in the degenerate cases. Also, such a new phenomenon shows the multiplicity results of problem \eqref{problem1-1} in the degenerate cases would be quite different from the situations in classical elliptic cases.
\end{remark}

The rest of our paper is organized as follows. In Section \ref{Section2}, we introduce some preliminaries including the degenerate Rellich-Kondrachov compact embedding theorem, the degenerate Cwikel-Lieb-Rozenblum inequality, and some prior estimates in critical point theory. Then in Section \ref{Section3}, we prove Theorem \ref{thm1} by combining the perturbation from symmetry method with the eigenvalue estimate approach. Next, we give the proof of Theorem \ref{thm2} in Section \ref{Section4} by invoking the Morse index theory. Finally, we present an example in Section \ref{Section5} to flourish the applications of Theorem \ref{thm1} and Theorem \ref{thm2}.

\textbf{Notations:} For the sake of simplicity, different positive constants are usually denoted by $C$ sometimes without indices.

\section{Preliminaries}
\label{Section2}
 In this section, we recall some useful results of H\"{o}rmander vector fields and present some prior estimates in critical point theory.

\subsection{Some results of H\"{o}rmander vector fields}

If the vector fields $X=(X_{1},X_{2},\ldots, X_{m})$ satisfy the H\"{o}rmander's condition on an open domain $W$ in $\mathbb{R}^n$, then for any bounded domain $\Omega\subset\subset W$, there exists a smallest positive integer $Q\geq 1$  such that $X_{1},X_{2},\ldots,X_{m}$ together with their commutators of length at most $Q$ span the tangent space $T_{x}(W)$ at each point  $x\in \overline{\Omega}$. The integer $Q$ is called the  H\"{o}rmander's index of $\overline{\Omega}$ with respect to $X$. For the vector fields $X$ under H\"{o}rmander's condition, we can define the generalized M\'{e}tivier index, which is also known as the non-isotropic dimension of $\Omega$ related to $X$ (cf. \cite{Chen2021,Yung2015}).

 \begin{definition}[Generalized M\'{e}tivier index]
    \label{def2-1}
    For each $x\in \overline{\Omega}$ and $1\leq j\leq Q$, let $V_{j}(x)$
be the subspace of the tangent space at $x$  spanned by all
commutators of $X_{1},\ldots,X_{m}$ with length at most $j$. We denote by $\nu_{j}(x)$ the dimension of vector space $V_{j}(x)$ at  $x\in \overline{\Omega}$. The pointwise homogeneous dimension at $x$ is given
by
\begin{equation}\label{2-1}
  \nu(x):=\sum_{j=1}^{Q}j(\nu_{j}(x)-\nu_{j-1}(x)),\qquad \nu_{0}(x):=0.
\end{equation}
Then we define
\begin{equation}\label{2-2}
  \tilde{\nu}:=\max_{x\in\overline{\Omega}} \nu(x)
\end{equation}
as the generalized M\'{e}tivier index of $\Omega$ associated with the vector fields $X$. The index $\tilde{\nu}$ is also called the non-isotropic dimension of $\Omega$ related to the vector fields $X$ (cf. \cite{Chen2021,Yung2015}). It follows from \eqref{2-2} that  $n+Q-1\leq \tilde{\nu}< nQ$ for $Q>1$.
\end{definition}

\begin{remark}
\label{remark2-1}
 If the dimension of $V_{j}(x)$ is a constant $\nu_{j}$ in some neighborhood of each $x\in \overline{\Omega}$, then we say the vector fields $X$ satisfy the M\'{e}tivier's condition on $\Omega$. Moreover, the M\'{e}tivier index is defined by
\begin{equation}\label{2-3}
  \nu=\sum_{j=1}^{Q}j(\nu_{j}-\nu_{j-1}),\qquad \nu_{0}:=0,
\end{equation}
which is also called the Hausdorff dimension of $\Omega$ related to the subelliptic metric induced by the vector fields $X$ (cf. \cite{Metivier1976}). Note that $\nu=\tilde{\nu}$ if the M\'{e}tivier's condition is satisfied.\par
 The M\'{e}tivier's condition is also known as the equiregular assumption in sub-Riemannian geometry (cf. \cite{Andrei2019}), which possesses a strong restriction on the vector fields $X$ satisfying H\"ormander's condition. However, there exist many vector fields under H\"ormander's condition but fail to fulfill the M\'{e}tivier's condition (e.g. the Grushin vector fields $X_{1}=\partial_{x_{1}}, X_{2}=x_{1}\partial_{x_{2}}$ in $\mathbb{R}^{2}$), and the generalized M\'{e}tivier index $\tilde{\nu}$ plays a crucial role in the geometry and  functional settings associated with the general H\"{o}rmander vector fields $X$.\par
\end{remark}
\begin{remark}
\label{remark2-1-2}
The generalized M\'{e}tivier index $\tilde{\nu}$ differs from the local homogeneous dimension (denoted by $\tilde{Q}$) on $\overline{\Omega}$ proposed in \cite{Capogna1996,Capogna1994,Capogna1993,Garofalo1996}. Actually, it follows from \eqref{2-1}, \cite[Proposition 2.2]{Chen2019} and \cite[Section 3]{Capogna1996} that $n\leq \nu(x)\leq \max_{x\in\overline{\Omega}}\nu(x)=\tilde{\nu}\leq \tilde{Q}$. In particular, if the vector fields $X$ satisfy the M\'{e}tivier's condition on $\Omega$, then $\tilde{\nu}=\tilde{Q}$.
\end{remark}
The following example shows that the generalized M\'{e}tivier index $\tilde{\nu}$ may strictly less than the local homogeneous dimension $\tilde{Q}$.
\begin{example}\label{ex2-1}
Consider the vector fields $X=(X_1,X_2,X_3)=(e^{x_2}\partial_{x_1}, e^{2x_2}\partial_{x_1}, x_1\partial_{x_2})$ in
$\mathbb{R}^{2}$. Assume that $\Omega=B_1(0)\subset \mathbb{R}^{2}$ is a 2-dimensional unit ball. It follows that the H\"ormander index is 2, and the local
homogeneous dimension $\tilde{Q}=4$. The pointwise homogeneous dimension $\nu(x)=2$ if $x_1\not= 0$, and $\nu(x)=3$ for $x_1=0$. Thus the generalized M\'{e}tivier index $\tilde{\nu}=3<\tilde{Q}$.
\end{example}

Then, we introduce some function spaces (cf. \cite{Xu1990}) associated with the vector fields $X$, which are the natural spaces when dealing with problems related to the H\"{o}rmander operators $\triangle_{X}$.
For vector fields $X=(X_{1},X_{2},\ldots,X_{m})$ and open set $W\subset \mathbb{R}^{n}$, we define
 \[ H_{X}^{1}(W)=\{u\in L^{2}(W)~|~X_{j}u\in L^{2}(W),~ j=1,\ldots,m\}. \]
The function space $H_{X}^{1}(W)$ is analogue to Sobolev space, which
  is a Hilbert space endowed with the norm
\begin{equation}\label{2-4}
 \|u\|^2_{H^{1}_{X}(W)}=\|u\|_{L^2(W)}^2+\|Xu\|_{L^2(W)}^2=\|u\|_{L^2(W)}^2+\sum_{j=1}^{m}\|X_{j}u\|_{L^2(W)}^2.
\end{equation}
In addition, we denote by
   $H^{1}_{X,0}(\Omega)$ the closure of $C_{0}^{\infty}(\Omega)$ in $H_{X}^{1}(W)$. It is well-known that $H^{1}_{X,0}(\Omega)$ is also a Hilbert space endowed with the norm in \eqref{2-4}.

The Poincar\'{e}-Wirtinger type inequality for H\"ormander vector fields was initially studied by Jerison \cite{Jerison1986duke} on the subunit balls with respect to the Carnot-Carath\'{e}odory metric. Subsequently, it was extensively investigated to some special domain (e.g. X-PS domain) by Saloff-Coste \cite{Saloff-Coste1992}, Garofalo-Duy-Minh \cite{Garofalo1996} and Hajlasz-Koskela \cite{Hajlasz2000}, etc. However, when considering the Dirichlet problem on $H_{X,0}^{1}(\Omega)$, we need the following Friedrichs-Poincar\'{e} type inequality for H\"ormander vector fields, which
is different from the Poincar\'{e}-Wirtinger type inequality.

 \begin{proposition}[Friedrichs-Poincar\'{e} type Inequality]
\label{Poincare}
Let $X=(X_{1},X_{2},\ldots,X_{m})$ be the real smooth vector fields defined on an open connected domain
  $W\subset\mathbb{R}^n$, which satisfy the H\"{o}rmander's condition in $W$.  Suppose  $\Omega\subset\subset W$ is a bounded  open domain with smooth boundary. Then the first Dirichlet eigenvalue $\lambda_{1}$ of self-adjoint operator $-\triangle_{X}$ is positive. Moreover,
\begin{equation}\label{2-5}
  \lambda_{1}\int_{\Omega}{|u|^2dx}\leq \int_{\Omega}|Xu|^2dx,~~ \forall u\in H_{X,0}^{1}(\Omega).
  \end{equation}
\end{proposition}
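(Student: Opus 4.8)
The plan is to work with the nonnegative constant
\[
\lambda_{1}:=\inf\Bigl\{\textstyle\int_{\Omega}|Xu|^{2}\,dx \ :\ u\in H_{X,0}^{1}(\Omega),\ \int_{\Omega}|u|^{2}\,dx=1\Bigr\},
\]
for which \eqref{2-5} holds by definition as soon as $\lambda_{1}>0$; it then remains to verify that $\lambda_{1}$ is actually attained, hence is the first Dirichlet eigenvalue, and that it is positive. For the first point I would take a minimizing sequence $\{u_{k}\}$. Since the admissible set is nonempty one has $\lambda_{1}<\infty$, so $\|u_{k}\|_{H_{X}^{1}}^{2}=1+\int_{\Omega}|Xu_{k}|^{2}\,dx$ stays bounded and, after passing to a subsequence, $u_{k}\rightharpoonup u_{1}$ weakly in $H_{X,0}^{1}(\Omega)$ and (by the degenerate Rellich--Kondrachov compact embedding $H_{X,0}^{1}(\Omega)\hookrightarrow\hookrightarrow L^{2}(\Omega)$ from Section~\ref{Section2}) $u_{k}\to u_{1}$ strongly in $L^{2}(\Omega)$. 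Thus $\int_{\Omega}|u_{1}|^{2}\,dx=1$, so $u_{1}\neq0$, while weak lower semicontinuity of the Dirichlet form gives $\int_{\Omega}|Xu_{1}|^{2}\,dx\le\liminf_{k}\int_{\Omega}|Xu_{k}|^{2}\,dx=\lambda_{1}$; hence $u_{1}$ is a minimizer, and its Euler--Lagrange equation $-\triangle_{X}u_{1}=\lambda_{1}u_{1}$ (in the weak sense) identifies $\lambda_{1}$ as the first Dirichlet eigenvalue.

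I would then suppose, for contradiction, that $\lambda_{1}=0$. The minimizer satisfies $\int_{\Omega}|Xu_{1}|^{2}\,dx=0$, i.e. $X_{j}u_{1}=0$ in $L^{2}(\Omega)$ for $j=1,\dots,m$. Because $u_{1}$ is an $H_{X}^{1}(W)$-limit of functions in $C_{0}^{\infty}(\Omega)$, it vanishes a.e. on $W\setminus\overline{\Omega}$, and therefore $X_{j}u_{1}=0$ in $L^{2}(W)$ as well. Consequently, for every $\psi\in C_{0}^{\infty}(W)$,
\[
\sum_{j=1}^{m}\int_{W}(X_{j}u_{1})(X_{j}\psi)\,dx=0,
\]
which, since $-\triangle_{X}=\sum_{j}X_{j}^{*}X_{j}$, means $-\triangle_{X}u_{1}=0$ in $\mathcal{D}'(W)$. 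By H\"ormander's hypoellipticity theorem \cite{hormander1967}, $u_{1}\in C^{\infty}(W)$. The remaining point is to bootstrap $X_{j}u_{1}\equiv0$ up to $\nabla u_{1}\equiv0$: from the identity $[X_{i},X_{j}]u_{1}=X_{i}(X_{j}u_{1})-X_{j}(X_{i}u_{1})$ and induction on bracket length, every iterated commutator of $X_{1},\dots,X_{m}$ annihilates $u_{1}$ throughout $W$; since such commutators of length at most $Q$ span $T_{x}W\cong\mathbb{R}^{n}$ at each $x\in\overline{\Omega}$, we obtain $\nabla u_{1}(x)=0$ for all $x\in\overline{\Omega}$. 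Together with $u_{1}\equiv0$ on $W\setminus\overline{\Omega}$, the smooth function $u_{1}$ has vanishing gradient on a neighbourhood of $\overline{\Omega}$ in $W$, hence is locally constant there; finally, for each connected component $\Omega'$ of $\Omega$ one picks $p\in\partial\Omega'\subset\partial\Omega$ and a ball $B\subset W$ centred at $p$ on which $u_{1}$ is constant, and since $B$ meets both $\Omega'$ and $W\setminus\overline{\Omega}$ (where $u_{1}=0$) we get $u_{1}\equiv0$ on $B$, hence on $\Omega'$, hence on $\Omega$. This contradicts $\int_{\Omega}|u_{1}|^{2}\,dx=1$, so $\lambda_{1}>0$ and \eqref{2-5} follows.

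The step I expect to be the main obstacle is exactly this last one: ruling out nonzero elements of $H_{X,0}^{1}(\Omega)$ that are annihilated by all the $X_{j}$, for a \emph{general} smooth bounded domain, including the case where $\partial\Omega$ has characteristic points. The argument above sidesteps any need for a vector field transverse to $\partial\Omega$ by extending $u_{1}$ by zero across the boundary inside $W$, using hypoellipticity of $\triangle_{X}$ to recover smoothness near $\overline{\Omega}$, and then combining the H\"ormander bracket condition with a connectedness argument; this is essentially the only place where smoothness of $\partial\Omega$ is really used. (Alternatively, for the inequality \eqref{2-5} alone one could invoke the subelliptic Sobolev embedding $H_{X,0}^{1}(\Omega)\hookrightarrow L^{2_{\tilde{\nu}}^{*}}(\Omega)$: H\"older's inequality on the bounded set $\Omega$ gives $\|u\|_{L^{2}(\Omega)}\le|\Omega|^{1/\tilde{\nu}}\|u\|_{L^{2_{\tilde{\nu}}^{*}}(\Omega)}\le C\|Xu\|_{L^{2}(\Omega)}$; but the compact embedding is still needed to know the sharp constant $\lambda_{1}$ is attained and is an eigenvalue.)
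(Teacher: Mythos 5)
Your proof is correct, and while the opening is the same as the paper's, the decisive step is handled by a genuinely different route. Both arguments define $\lambda_{1}$ as the Rayleigh-quotient infimum and use the compact embedding $H_{X,0}^{1}(\Omega)\hookrightarrow L^{2}(\Omega)$ to produce, if $\lambda_{1}=0$, a function $u_{1}$ with $\|u_{1}\|_{L^{2}(\Omega)}=1$ and $Xu_{1}=0$ (the paper quotes this compactness from Derridj rather than from its Proposition \ref{prop2-3}, whose $L^{2}$-part rests on the subelliptic estimate and not on \eqref{2-5}, so neither use is circular), and both invoke hypoellipticity and the bracket condition. From there the paper stays inside $\overline{\Omega}$: $u_{1}$ is constant on $\Omega$, and the constant is shown to vanish by splitting $\partial\Omega$ into its characteristic part $\Gamma_{1}$ (of $(n-1)$-measure zero, by Derridj) and non-characteristic part $\Gamma_{0}$, and applying Derridj's trace theorem on compact pieces $\Gamma_{\varepsilon}\subset\Gamma_{0}$ to see that the trace of any $H_{X,0}^{1}$-function, hence of $u_{1}$, vanishes there. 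You instead exploit that $H_{X,0}^{1}(\Omega)$ is by definition a closure inside $H_{X}^{1}(W)$: the zero extension of $u_{1}$ satisfies $X_{j}u_{1}=0$ in $L^{2}(W)$ (a.e.\ on $\Omega$ by minimality, a.e.\ on $W\setminus\overline{\Omega}$ since the approximating functions vanish there, and $\partial\Omega$ is Lebesgue-null), hence $\triangle_{X}u_{1}=0$ in $\mathcal{D}'(W)$, so $u_{1}\in C^{\infty}(W)$; the commutator induction kills $\nabla u_{1}$, so $u_{1}$ is locally constant on $W$ and vanishes on the nonempty open set $W\setminus\overline{\Omega}$, and connectedness (of $W$, which is assumed — your per-component argument via balls at boundary points also works because $\Omega$ lies locally on one side of its smooth boundary) forces $u_{1}\equiv 0$, a contradiction. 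What your route buys is a proof that avoids Derridj's trace theorem and all boundary analysis at characteristic points; what it costs is hypoellipticity of $\triangle_{X}$ on all of $W$ (available, since the H\"ormander condition is assumed on $W$) and the zero-extension property built into the definition of $H_{X,0}^{1}(\Omega)$. One caveat: your parenthetical shortcut to \eqref{2-5} via $\|u\|_{L^{2_{\tilde{\nu}}^{*}}(\Omega)}\leq C\|Xu\|_{L^{2}(\Omega)}$ is circular, since dropping the $\|u\|_{L^{2}}$ term from Yung's embedding (the paper's \eqref{2-15}) is itself deduced from the Friedrichs--Poincar\'{e} inequality; as it is only an aside, the main argument is unaffected.
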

 If there exists at least one vector field $X_{j}~ (1\leq j\leq m)$ which can be globally straightened in $\Omega$, we may prove \eqref{2-5} in a direct way (see \cite[Lemma 5]{Xu1990}). Furthermore, \eqref{2-5} was also achieved in \cite[Lemma 3.2]{Jost1998} and \cite[Proposition 2.1]{Chen2021} under the extra non-characteristic condition on the boundary of $\Omega$. The
characteristic set in the boundary may not be empty for the general smooth domain (e.g. a unit ball in Heisenberg group), which leads to many difficulties in studying the boundary regularity of weak solutions in the characteristic points. However, Derridj \cite{Derridj1972} showed that the characteristic set admits zero measure in the boundary of smooth domain $\Omega$, which allows us to treat \eqref{2-5} without the non-characteristic condition by using the trace theorem locally in the non-characteristic subset of the boundary. Besides, we remark that Bahouri-Chemin-Xu \cite{Bahouri2005} gave a lifting theorem and a trace theorem on non-characteristic surfaces for Sobolev spaces associated with the vector fields $X$ possessing H\"{o}rmander index $2$. The case of non-degenerate  characteristic points in the framework of the Heisenberg group was also studied in \cite{Bahouri2005}.

 We now present the proof of \eqref{2-5} as follows:

 \begin{proof}[Proof of Proposition \ref{Poincare}]
We set
\[ \lambda_{1}=\inf_{\|\varphi\|_{L^{2}(\Omega)}=1,~~ \varphi\in H_{X,0}^{1}(\Omega)}\|X\varphi\|^2_{L^2(\Omega)}. \]
Suppose  $\lambda_{1}=0$. Then there exists a sequence
$\{\varphi_{j}\}_{j=1}^{\infty}$ in  $H_{X,0}^{1}(\Omega)$ such that
$\|X\varphi_{j}\|_{L^2(\Omega)}\to 0$ with
$\|\varphi_{j}\|_{L^2(\Omega)}=1$. Observing that $H_{X,0}^{1}(\Omega)$ is
compactly embedded into $L^2(\Omega)$ (see \cite{Derridj1971}), it follows from \cite[Chapter I, Theorem 1.2]{Struwe2000}
 that there exists
$\varphi_{0}\in H_{X,0}^{1}(\Omega)$ satisfying
$\|\varphi_{0}\|_{L^2(\Omega)}=1$, $\triangle_{X}\varphi_{0}=0$ and $\|X\varphi_{0}\|_{L^2(\Omega)}=
0$. The hypoellipticity of  $\triangle_{X}$ yields $\varphi_{0}\in C^{\infty}(\Omega)$. Moreover, since $X_{j}\varphi_0=0$ in $\Omega$ for $1\leq j\leq m$, we deduce from  H\"{o}rmander's condition that $\partial_{x_{j}}\varphi_0=0$ in $\Omega$ for $1\leq j\leq n$, which means $\varphi_0(x)=C$  in $\Omega$.

For $x_0\in \partial\Omega$, we say $x_0$ is a non-characteristic point of $\partial\Omega$ for vector fields $X_{1},X_{2},\ldots,X_{m}$, if there exists a vector field $X_{j}\in \{X_{1},X_{2},\ldots,X_{m}\}$ such that $X_{j}(x_0)\notin T_{x_{0}}(\partial\Omega)$.  Otherwise, $x_{0}$ is called the characteristic point of vector fields $X$. Let $\partial\Omega=\Gamma_0\cup \Gamma_{1}$, where $\Gamma_0$ is the collection of all non-characteristic points in $\partial\Omega$, and $\Gamma_1=\partial\Omega\setminus \Gamma_0$ denotes the set of characteristic points in $\partial\Omega$. According to Derridj \cite{Derridj1971,Derridj1972},
 we have $|\Gamma_1|_{n-1}=0$ and $\varphi_{0}\in C^{\infty}(\Omega\cup \Gamma_0)$, where $|\Gamma_1|_{n-1}$ denotes the $(n-1)$-dimensional measure of $\Gamma_1$ in  $\partial\Omega$.

We next prove that $\varphi_{0}(x_{0})=0$ holds for some point $x_0\in \Gamma_0$. For any small $\varepsilon>0$, we can choose a non-empty compact set  $\Gamma_{\varepsilon}$ such that $\Gamma_{\varepsilon}\subset \Gamma_{0}\subset \partial\Omega$ and  $|\partial\Omega\setminus \Gamma_{\varepsilon}|_{n-1}\leq \varepsilon$. Owing to \cite[Theorem 2]{Derridj1972},
there is a continuous trace operator $T_{\varepsilon}:H_{X}^{1}(\Omega)\to L^2(\Gamma_{\varepsilon})$ such that $T_{\varepsilon}(u)=u|_{\Gamma_{\varepsilon}}$ for $u\in H_{X}^{1}(\Omega)\cap C^{\infty}(\Omega\cup \Gamma_{\varepsilon})$. For any $u_0\in H_{X,0}^{1}(\Omega)$, there exists a sequence $\{u_{k}\}_{k=1}^{\infty}\subset C_{0}^{\infty}(\Omega)$ such that $u_{k} \to u_{0}$ in $H_{X,0}^{1}(\Omega)$. Observing that $u_{k}\in C_{0}^{\infty}(\Omega)\subset H_{X}^{1}(\Omega)\cap C^{\infty}(\Omega\cup \Gamma_{\varepsilon})$ with $u_{k}|_{\Gamma_{0}}=0$, we have $T_{\varepsilon}(u_{k})\to T_{\varepsilon}(u_0)$ and $T_{\varepsilon}(u_0)=0$. As a result,  $\varphi_{0}\in H_{X,0}^{1}(\Omega)$ implies $T_{\varepsilon}(\varphi_{0})=0$. Moreover, $\varphi_{0}\in C^{\infty}(\Omega\cup \Gamma_\varepsilon)$ due to $\Gamma_{\varepsilon}\subset \Gamma_0$. Thus, we have $\varphi_{0}|_{\Gamma_{\varepsilon}}=T_{\varepsilon}(\varphi_{0})=0$, which indicates $\varphi_{0}(x_{0})=0$ for some
$x_0\in \Gamma_0$. Combining this fact with  $\varphi_{0}\in C^{\infty}(\Omega\cup \Gamma_0)$ and $\varphi_{0}$ is constant in $\Omega$, it follows that $\varphi_{0}(x)=0$ for all $x\in \Omega$. This
contradicts $\|\varphi_{0}\|_{L^2(\Omega)}=1$.
 \end{proof}

The subelliptic Sobolev-type estimates in general have received a lot of attention over the years, e.g. \cite{Capogna1993,Capogna1994,Capogna1996,Hajlasz2000,Cohn2007,Garofalo1996} etc. Roughly speaking, these results obtained the subelliptic Sobolev embedding $H_{X,0}^{1}(\Omega)\hookrightarrow L^{q}(\Omega)$ for $1\leq q\leq \frac{2\tilde{Q}}{\tilde{Q}-2}$, where $\tilde{Q}$ is the local homogeneous dimension of domain $\overline{\Omega}$. In 2015, Yung \cite{Yung2015} gave the following sharp subelliptic Sobolev embedding theorem, which improved the previous results.

 \begin{proposition}[Weighted Sobolev Embedding Theorem]
 \label{pro2-2}
 Let $X$ and $\Omega$ satisfy the assumptions in Proposition \ref{Poincare}. Denote by $\tilde{\nu}$ the generalized M\'{e}tivier index of $\Omega$ defined in \eqref{2-2}. Then for $1\leq p<\tilde{\nu}$ and $q=\frac{\tilde{\nu}p}{\tilde{\nu}-p}$,  there exists a constant $C=C(\Omega,X)>0$ such that
\begin{equation}\label{2-6}
\|u\|_{L^{q}(\Omega)}\leq C\left(\|Xu\|_{L^{p}(\Omega)}+\|u\|_{L^{p}(\Omega)}\right),~~\forall u\in C^{\infty}(\overline{\Omega}).
\end{equation}
\end{proposition}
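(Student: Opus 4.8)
The plan is to derive \eqref{2-6} from the abstract Sobolev--Poincar\'e inequality on a space of homogeneous type that carries a lower mass bound whose exponent is exactly the generalized M\'etivier index. First I would localize: fix a finite open cover $\{B_{i}\}$ of the compact set $\overline{\Omega}$ and a smooth subordinate partition of unity $\{\phi_{i}\}$, and write $u=\sum_{i}\phi_{i}u$. Because $X_{j}(\phi_{i}u)=\phi_{i}X_{j}u+(X_{j}\phi_{i})u$ and the coefficients of each $X_{j}$ are smooth, one has $\|X(\phi_{i}u)\|_{L^{p}(\Omega)}\le C\bigl(\|Xu\|_{L^{p}(\Omega)}+\|u\|_{L^{p}(\Omega)}\bigr)$, the constant depending only on the cover and on $X$; so it suffices to prove \eqref{2-6} for a smooth function supported in one fixed small Carnot--Carath\'eodory ball $B=B_{\mathrm{cc}}(x_{0},r_{0})$ (replaced by $B\cap\Omega$ if $x_{0}\in\partial\Omega$).

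The geometric core is the \emph{uniform lower mass bound}
\[
\mathcal{L}^{n}\bigl(B_{\mathrm{cc}}(x,r)\bigr)\ \ge\ c\,r^{\tilde{\nu}}\qquad\text{for all }x\in\overline{\Omega}\text{ and }0<r\le r_{0}.
\]
By the Nagel--Stein--Wainger ball--box theorem (see \cite{Yung2015}) the left side is comparable to $\sum_{I}|\lambda_{I}(x)|\,r^{d(I)}$, the sum running over admissible $n$-tuples $I$ of iterated commutators, where $\lambda_{I}(x)$ is the corresponding Jacobian and $d(I)$ its total degree; moreover for each $x$ the least degree $d(I)$ with $\lambda_{I}(x)\ne 0$ equals the pointwise homogeneous dimension $\nu(x)$ of \eqref{2-1}, and $\nu(x)\le\tilde{\nu}$ by \cite[Proposition 2.2]{Chen2019}. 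A short compactness argument then shows $\max\{|\lambda_{I}(x)|:d(I)\le\tilde{\nu}\}\ge c>0$ uniformly on $\overline{\Omega}$ (otherwise one would find a point where every such $\lambda_{I}$ vanishes, forcing $\nu>\tilde{\nu}$ there), and since $r_{0}$ may be taken $\le 1$ the displayed bound follows. This is precisely where $\tilde{\nu}$ --- rather than the larger local homogeneous dimension $\tilde{Q}$ --- appears: the vanishing of the minimal-degree Jacobian near the (lower-dimensional) set where $\nu$ is maximal is compensated by a higher-degree, non-degenerate term. Now $(\overline{\Omega},d_{\mathrm{cc}},\mathcal{L}^{n})$ is doubling, and Jerison's subelliptic Poincar\'e inequality on Carnot--Carath\'eodory balls \cite{Jerison1986duke} supplies a $(1,p)$-Poincar\'e pairing for $(u,|Xu|)$; feeding the lower mass bound with exponent $s=\tilde{\nu}$, the $(1,p)$-Poincar\'e inequality and doubling into the Sobolev--Poincar\'e embedding theorem on spaces of homogeneous type (cf.\ \cite{Hajlasz2000}) yields, for $1\le p<\tilde{\nu}$,
\[
\left(\frac{1}{|B|}\int_{B}|u-u_{B}|^{q}\,dx\right)^{1/q}\le C\,r_{0}\left(\frac{1}{|\sigma B|}\int_{\sigma B}|Xu|^{p}\,dx\right)^{1/p},\qquad q=\frac{\tilde{\nu}p}{\tilde{\nu}-p},
\]
where $u_{B}$ is the average of $u$ over $B$ and $\sigma>1$ is a fixed dilation factor. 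Bounding $|u_{B}|$ by $\|u\|_{L^{p}(B)}$ through H\"older's inequality, absorbing the fixed $r_{0}$ and $|B|$ into the constant, and summing over the partition of unity gives \eqref{2-6}; the endpoint $p=1$ follows from the usual truncation device, with the weak $(1,1)$ maximal inequality in place of the strong one.

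The delicate point is the boundary. At a boundary point $x_{0}\in\partial\Omega$ the local estimate above must be carried out on $B_{\mathrm{cc}}(x_{0},r_{0})\cap\Omega$, and since only $\|Xu\|_{L^{p}}$ is available --- not $\|\nabla u\|_{L^{p}}$, which in the degenerate case is strictly larger --- a Euclidean extension of $u$ is useless. What is required is that a bounded domain with smooth Euclidean boundary be a Poincar\'e--Sobolev domain for the metric $d_{\mathrm{cc}}$, a property that is genuinely subtle at characteristic boundary points. This can be handled exactly as in the proof of Proposition \ref{Poincare}: by Derridj's theorem \cite{Derridj1972} the characteristic set has vanishing $(n-1)$-dimensional measure, and one works with the local trace/extension theorem on the non-characteristic part of $\partial\Omega$ (alternatively, one simply invokes the corresponding assertion of \cite{Yung2015} directly). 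Establishing this boundary regularity is the main obstacle; everything else is soft once the ball--box geometry and the metric Sobolev machinery are in hand.
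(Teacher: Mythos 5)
The paper offers no argument at all for this proposition: its ``proof'' is the single citation to \cite[Corollary 1]{Yung2015}. Your interior analysis is therefore a genuinely different, essentially self-contained route, and its core is correct: the reduction by a partition of unity, the Nagel--Stein--Wainger comparison of $|B_{\mathrm{cc}}(x,r)|$ with $\sum_I|\lambda_I(x)|r^{d(I)}$, the fact that the minimal degree with $\lambda_I(x)\neq 0$ is $\nu(x)\leq\tilde{\nu}$ (cf. \eqref{2-1}, \cite{Chen2019}), and the compactness argument yielding $|B_{\mathrm{cc}}(x,r)|\geq c\,r^{\tilde{\nu}}$ for $r\leq r_0\leq 1$ are all sound, and feeding this lower mass bound together with Jerison's Poincar\'e inequality \cite{Jerison1986duke} into the Haj{\l}asz--Koskela machinery \cite{Hajlasz2000} does produce the Sobolev--Poincar\'e inequality with exponent $q=\tilde{\nu}p/(\tilde{\nu}-p)$ on Carnot--Carath\'eodory balls compactly contained in $\Omega$. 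This correctly isolates why $\tilde{\nu}$, and not the larger $\tilde{Q}$, governs the exponent, and it would suffice for $u\in C_0^{\infty}(\Omega)$, which is all that the paper actually uses later through $H_{X,0}^{1}(\Omega)$ in Corollary \ref{corollary2-1}.

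The genuine gap is the boundary step, which you flag as ``the delicate point'' and then dispose of incorrectly. The statement is for all $u\in C^{\infty}(\overline{\Omega})$, and for a piece $\phi_i u$ supported in a ball meeting $\partial\Omega$ the metric-space Sobolev--Poincar\'e inequality cannot be applied: $\phi_i u$ is defined only on $B_i\cap\Omega$ and does not extend by zero across $\partial\Omega$, so one needs $\Omega$ to be an extension (or Boman chain/John) domain for the Carnot--Carath\'eodory metric, a property that a smooth Euclidean boundary does not automatically enjoy at characteristic points. Your proposed fix via Derridj \cite{Derridj1972} does not supply it: his results give a trace operator on the non-characteristic part of the boundary and the measure-zero statement for the characteristic set. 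In Proposition \ref{Poincare} that was enough because one only needed a single non-characteristic point at which the trace of a constant function must vanish; here one needs a quantitative estimate of $u$ up to \emph{every} boundary point, characteristic ones included, and neither a trace theorem nor a measure-zero statement yields the required extension or chain property. Your other fallback --- ``invoke the corresponding assertion of \cite{Yung2015} directly'' --- is precisely the paper's proof, at which point the rest of your construction proves nothing beyond the citation. To make the argument self-contained as stated you would have to actually establish the boundary estimate (for instance, prove that bounded smooth domains are $L^p$-extension domains for the fields $X$, or reproduce Yung's treatment of the boundary); that substantive step is missing.
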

\begin{proof}
See  \cite [Corollary 1]{Yung2015}.
\end{proof}

\begin{corollary}
\label{corollary2-1}
In particular, for $\tilde{\nu}\geq 3$,  Proposition \ref{pro2-2} implies the embedding
\begin{equation}
\label{2-7}
H_{X,0}^{1}(\Omega)\hookrightarrow L^{q}(\Omega)
\end{equation}
is bounded for any $1\leq q\leq \frac{2\tilde{\nu}}{\tilde{\nu}-2}:=2_{\tilde{\nu}}^{*}$.
\end{corollary}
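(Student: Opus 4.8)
The plan is to obtain this as a direct consequence of Proposition \ref{pro2-2} by specializing the exponent, then passing from $C^{\infty}(\overline{\Omega})$ to $H_{X,0}^{1}(\Omega)$ by density, and finally using the boundedness of $\Omega$ to fill in the whole range $1\leq q\leq 2_{\tilde\nu}^{*}$. First I would apply Proposition \ref{pro2-2} with $p=2$, which is admissible because $\tilde\nu\geq 3>2$; the associated exponent is then $q=\frac{\tilde\nu\cdot 2}{\tilde\nu-2}=\frac{2\tilde\nu}{\tilde\nu-2}=2_{\tilde\nu}^{*}$. Hence \eqref{2-6} provides a constant $C=C(\Omega,X)>0$ such that
\[ \|u\|_{L^{2_{\tilde\nu}^{*}}(\Omega)}\leq C\bigl(\|Xu\|_{L^{2}(\Omega)}+\|u\|_{L^{2}(\Omega)}\bigr)=C\|u\|_{H_{X}^{1}(\Omega)},\qquad\forall\,u\in C^{\infty}(\overline{\Omega}). \]

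Next I would extend this estimate to every $u\in H_{X,0}^{1}(\Omega)$. By definition $H_{X,0}^{1}(\Omega)$ is the closure of $C_{0}^{\infty}(\Omega)$ in the $H_{X}^{1}$-norm, and $C_{0}^{\infty}(\Omega)\subset C^{\infty}(\overline{\Omega})$, so for $u\in H_{X,0}^{1}(\Omega)$ we may pick $u_{k}\in C_{0}^{\infty}(\Omega)$ with $u_{k}\to u$ in $H_{X}^{1}(\Omega)$. The displayed inequality applied to $u_{k}-u_{j}$ shows that $(u_{k})$ is Cauchy in $L^{2_{\tilde\nu}^{*}}(\Omega)$, hence $u_{k}\to v$ in $L^{2_{\tilde\nu}^{*}}(\Omega)$ for some $v$; since also $u_{k}\to u$ in $L^{2}(\Omega)$, passing to a suitable subsequence gives $u_{k}\to u$ and $u_{k}\to v$ almost everywhere, so $v=u$ a.e. Therefore $u\in L^{2_{\tilde\nu}^{*}}(\Omega)$ and, passing to the limit in the inequality (or using Fatou's lemma), $\|u\|_{L^{2_{\tilde\nu}^{*}}(\Omega)}\leq C\|u\|_{H_{X}^{1}(\Omega)}$. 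This is the boundedness of \eqref{2-7} for $q=2_{\tilde\nu}^{*}$; one may, if desired, further invoke Proposition \ref{Poincare} to replace $\|u\|_{H_{X}^{1}(\Omega)}$ by the equivalent norm $\|Xu\|_{L^{2}(\Omega)}$, although this is not needed here.

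Finally, for $1\leq q<2_{\tilde\nu}^{*}$, the boundedness of $\Omega$ together with H\"older's inequality gives
\[ \|u\|_{L^{q}(\Omega)}\leq |\Omega|^{\frac{1}{q}-\frac{1}{2_{\tilde\nu}^{*}}}\,\|u\|_{L^{2_{\tilde\nu}^{*}}(\Omega)}, \]
and composing with the estimate from the previous step yields a constant $C=C(\Omega,X,q)>0$ with $\|u\|_{L^{q}(\Omega)}\leq C\|u\|_{H_{X,0}^{1}(\Omega)}$ for all $u\in H_{X,0}^{1}(\Omega)$. Thus the embedding \eqref{2-7} is bounded for every $q\in[1,2_{\tilde\nu}^{*}]$.

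\textbf{Main obstacle.} This corollary is essentially a bookkeeping consequence of the sharp Sobolev estimate in Proposition \ref{pro2-2}, so there is no genuine difficulty; the only point demanding a little care is the limiting argument, namely checking that an inequality valid a priori only on $C^{\infty}(\overline{\Omega})$ survives the completion procedure defining $H_{X,0}^{1}(\Omega)$. This works precisely because the class $C_{0}^{\infty}(\Omega)$ used to build that space is smooth up to the boundary and therefore lies inside $C^{\infty}(\overline{\Omega})$, so the constant in \eqref{2-6} is inherited by all of $H_{X,0}^{1}(\Omega)$.
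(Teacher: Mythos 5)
Your proposal is correct and follows essentially the same route as the paper, which states the corollary as an immediate consequence of Proposition \ref{pro2-2}: you simply make explicit the choice $p=2$, the density of $C_{0}^{\infty}(\Omega)\subset C^{\infty}(\overline{\Omega})$ in $H_{X,0}^{1}(\Omega)$, and the H\"older interpolation on the bounded domain for $1\leq q<2_{\tilde{\nu}}^{*}$. These routine details are exactly what the paper leaves implicit, so there is nothing to add.
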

\begin{remark}
Remark \ref{remark2-1-2} implies that  $\frac{2\tilde{Q}}{\tilde{Q}-2}\leq \frac{2\tilde{\nu}}{\tilde{\nu}-2}$. Hence, the embedding $H_{X,0}^{1}(\Omega)\hookrightarrow L^{\frac{2\tilde{\nu}}{\tilde{\nu}-2}}(\Omega)$ in Corollary \ref{corollary2-1} is sharper than the embedding $H_{X,0}^{1}(\Omega)\hookrightarrow L^{\frac{2\tilde{Q}}{\tilde{Q}-2}}(\Omega)$ in  \cite{Capogna1996,Capogna1994,Capogna1993,Garofalo1996}, where $\frac{2\tilde{\nu}}{\tilde{\nu}-2}$ is the optimal critical exponent for general degenerate elliptic problem.
\end{remark}

In order to prove the Rellich-Kondrachov compact embedding theorem in degenerate case, we need introduce the following abstract version of the Rellich-Kondrachov compactness theorem.

\begin{proposition}\label{prop2-2-1}
Let $Y$ be a set equipped with a finite measure $\mu$. Assume that a linear normed space $G$ of measurable functions on $Y$ has the following two properties:
\begin{enumerate}
  \item [(1)] There exists a constant $q>1$ such that the embedding $G\hookrightarrow L^q(Y,\mu)$ is bounded;
  \item [(2)] Every bounded sequence in $G$ contains a subsequence that converges almost everywhere.
\end{enumerate}
 Then the embedding $G\hookrightarrow L^s(Y,\mu)$ is compact for every $1\leq s<q$.
\end{proposition}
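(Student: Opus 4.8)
The plan is to combine a diagonal extraction argument with the classical proof that weak convergence plus pointwise convergence implies norm convergence in $L^s$ for subcritical exponents, via Vitali's convergence theorem. Let $\{u_j\}$ be a bounded sequence in $G$, say $\|u_j\|_G \le M$. By property (2) we extract a subsequence (not relabeled) with $u_j \to u$ almost everywhere on $Y$. By property (1), the sequence is bounded in $L^q(Y,\mu)$, so Fatou's lemma gives $u \in L^q(Y,\mu)$ with $\|u\|_{L^q} \le \liminf \|u_j\|_{L^q} \le CM$. It then suffices to show $u_j \to u$ in $L^s(Y,\mu)$ for each fixed $1 \le s < q$.

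First I would reduce to proving that $\{|u_j - u|^s\}$ is uniformly integrable with respect to $\mu$; once this is established, Vitali's convergence theorem (applicable because $\mu(Y) < \infty$ and $|u_j - u|^s \to 0$ almost everywhere) yields $\int_Y |u_j - u|^s \, d\mu \to 0$, which is exactly the desired conclusion. To verify uniform integrability, fix a measurable set $E \subset Y$ and apply H\"older's inequality with exponents $\frac{q}{s} > 1$ and its conjugate $\frac{q}{q-s}$:
\[
\int_E |u_j - u|^s \, d\mu \le \left( \int_E |u_j - u|^q \, d\mu \right)^{s/q} \mu(E)^{1 - s/q} \le \left( \|u_j\|_{L^q} + \|u\|_{L^q} \right)^{s} \mu(E)^{1 - s/q} \le (2CM)^s \, \mu(E)^{1-s/q}.
\]
Since $1 - s/q > 0$, the right-hand side tends to $0$ as $\mu(E) \to 0$, uniformly in $j$; this is precisely uniform integrability of the family $\{|u_j - u|^s\}_j$.

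With uniform integrability in hand, Vitali's theorem applies: the family $\{|u_j-u|^s\}$ is uniformly integrable, $\mu$ is finite, and $|u_j - u|^s \to 0$ $\mu$-a.e., hence $\|u_j - u\|_{L^s}^s = \int_Y |u_j-u|^s\,d\mu \to 0$. This shows every bounded sequence in $G$ has a subsequence converging in $L^s(Y,\mu)$, which is the definition of compactness of the embedding $G \hookrightarrow L^s(Y,\mu)$.

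**The main obstacle** is essentially bookkeeping rather than deep: one must be careful that the extracted a.e.-convergent subsequence is the same one used throughout, and that the limit $u$ genuinely lies in $L^q$ (this is where Fatou is needed, since property (1) only controls the $u_j$, not $u$ directly). One should also make sure Vitali's convergence theorem is invoked in the correct form — the version requiring finite measure space, a.e. convergence, and uniform integrability — rather than the version for general measure spaces which needs an additional tightness hypothesis; here finiteness of $\mu$ makes tightness automatic. No compactness of $Y$ or reflexivity of $G$ is needed, which is what makes the statement flexible enough to later apply with $G = H^1_{X,0}(\Omega)$ and $q = 2^*_{\tilde\nu}$.
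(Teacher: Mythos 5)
Your proof is correct. Note that the paper itself does not prove this proposition at all: its ``proof'' consists of the single line ``See Theorem 4 in \cite{Hajlasz1998}'', deferring entirely to Haj\l asz--Koskela. What you supply is a complete, self-contained argument along the standard lines underlying that reference: extract an a.e.-convergent subsequence via property (2), use property (1) together with Fatou to place the limit $u$ in $L^q(Y,\mu)$, establish uniform integrability of $\{|u_j-u|^s\}$ by H\"older with exponents $\tfrac{q}{s}$ and $\tfrac{q}{q-s}$ (this is where $s<q$ and $\mu(Y)<\infty$ enter), and conclude by Vitali's convergence theorem. All steps check out, including the points you flag yourself: the same subsequence is used throughout, the limit's $L^q$-membership is obtained from Fatou rather than assumed, and the finite-measure form of Vitali is the right one. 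An equally common variant replaces Vitali by Egorov (uniform convergence off a small set, with the H\"older bound handling the exceptional set), but the content is the same; your version buys the paper a proof it currently only cites.
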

\begin{proof}
See Theorem 4 in \cite{Hajlasz1998}.
\end{proof}

Thus, we have the following compact embedding result.
\begin{proposition}[Degenerate Rellich-Kondrachov Compact Embedding Theorem]
\label{prop2-3}
Let $X$ and $\Omega$ satisfy the assumptions in Proposition  \ref{Poincare}. Then the embedding
\[ H_{X,0}^1(\Omega)\hookrightarrow L^s(\Omega) \]
is compact for every $s\in [1,2_{\tilde{\nu}}^*)$, where $2_{\tilde{\nu}}^*=\frac{2\tilde{\nu}}{\tilde{\nu}-2}$ and $\tilde{\nu}\geq 3$ is the generalized M\'{e}tivier index defined in \eqref{2-2}.
\end{proposition}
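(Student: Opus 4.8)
The plan is to deduce Proposition \ref{prop2-3} directly from the abstract compactness criterion in Proposition \ref{prop2-2-1}, applied with $Y=\Omega$ equipped with Lebesgue measure (which is finite since $\Omega$ is bounded), with the normed space $G=H_{X,0}^{1}(\Omega)$, and with the exponent $q=2_{\tilde{\nu}}^{*}=\frac{2\tilde{\nu}}{\tilde{\nu}-2}$; note that $q>1$ because $\tilde{\nu}\geq 3$. It then suffices to verify the two hypotheses (1) and (2) of Proposition \ref{prop2-2-1}, after which its conclusion yields precisely the compactness of $H_{X,0}^{1}(\Omega)\hookrightarrow L^{s}(\Omega)$ for every $1\leq s<2_{\tilde{\nu}}^{*}$.

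For hypothesis (1), I would apply Proposition \ref{pro2-2} with $p=2$: since $\tilde{\nu}\geq 3$ we have $2<\tilde{\nu}$ and $\frac{\tilde{\nu}\cdot 2}{\tilde{\nu}-2}=2_{\tilde{\nu}}^{*}$, so \eqref{2-6} gives $\|u\|_{L^{2_{\tilde{\nu}}^{*}}(\Omega)}\leq C\left(\|Xu\|_{L^{2}(\Omega)}+\|u\|_{L^{2}(\Omega)}\right)$ for all $u\in C^{\infty}(\overline{\Omega})$. Since $C_{0}^{\infty}(\Omega)\subset C^{\infty}(\overline{\Omega})$ is dense in $H_{X,0}^{1}(\Omega)$ and the right-hand side is dominated by $\|u\|_{H_{X}^{1}(\Omega)}$, a routine density and limiting argument extends the inequality to all $u\in H_{X,0}^{1}(\Omega)$; this is exactly the bounded embedding recorded in Corollary \ref{corollary2-1}, so hypothesis (1) holds with $q=2_{\tilde{\nu}}^{*}$.

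The step carrying the genuine compactness content is hypothesis (2): every bounded sequence in $H_{X,0}^{1}(\Omega)$ must contain a subsequence converging almost everywhere on $\Omega$. Here I would invoke the classical fact, already used in the proof of Proposition \ref{Poincare} and due to Derridj \cite{Derridj1971}, that $H_{X,0}^{1}(\Omega)$ embeds compactly into $L^{2}(\Omega)$. Thus, given a bounded sequence $\{u_{k}\}\subset H_{X,0}^{1}(\Omega)$, it admits a subsequence converging strongly in $L^{2}(\Omega)$, and strong $L^{2}$-convergence in turn furnishes a further subsequence converging a.e. on $\Omega$; this establishes hypothesis (2). I expect this to be the main point of the argument, in the sense that it relies on the nontrivial subelliptic $L^{2}$-compactness rather than on soft functional analysis.

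With both hypotheses verified, Proposition \ref{prop2-2-1} applies and yields the claimed compact embedding $H_{X,0}^{1}(\Omega)\hookrightarrow L^{s}(\Omega)$ for all $s\in[1,2_{\tilde{\nu}}^{*})$. Conceptually, the only deep inputs are Yung's sharp weighted Sobolev embedding (Proposition \ref{pro2-2}), which fixes the optimal critical exponent $2_{\tilde{\nu}}^{*}$, and Derridj's baseline $L^{2}$-compactness; the extension to the entire subcritical range $[1,2_{\tilde{\nu}}^{*})$ is then the purely interpolation-type mechanism packaged in Proposition \ref{prop2-2-1}.
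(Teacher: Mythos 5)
Your proposal is correct and follows essentially the same route as the paper: verify the two hypotheses of Proposition \ref{prop2-2-1} with $q=2_{\tilde{\nu}}^{*}$, using Yung's embedding for boundedness and the compact embedding $H_{X,0}^{1}(\Omega)\hookrightarrow L^{2}(\Omega)$ to extract an a.e.-convergent subsequence. The only cosmetic difference is that the paper justifies the $L^{2}$-compactness via a subelliptic estimate $\|u\|_{H^{\epsilon_{0}}}^{2}\leq C(\|Xu\|_{L^{2}}^{2}+\|u\|_{L^{2}}^{2})$ combined with the classical Rellich embedding $H^{\epsilon_{0}}(\Omega)\hookrightarrow L^{2}(\Omega)$, whereas you cite Derridj's result directly, which the paper itself also invokes elsewhere.
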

\begin{proof}
From \eqref{2-7}, we know the embedding $H_{X,0}^1(\Omega)\hookrightarrow L^{2_{\tilde{\nu}}^*}(\Omega)$ is bounded. For any bounded sequence $\{u_k\}_{k=1}^{\infty}$ in $H_{X,0}^1(\Omega)$, there exists a subsequence $\{u_{k_{j}}\}_{j=1}^{\infty}\subset\{u_k\}_{k=1}^{\infty}$ such that $u_{k_{j}}\rightharpoonup u$ weakly in $H_{X,0}^1(\Omega)$. From subelliptic estimate we know that there exist constants $\epsilon_{0}>0$ and  $C>0$ such that
  $\|u\|_{H^{\epsilon_{0}}(\mathbb{R}^n)}^2\leq C\left(\|Xu\|_{L^2(\mathbb{R}^n)}^{2}+\|u\|_{L^2(\mathbb{R}^n)}^2\right)$ holds for all $u\in H_{X,0}^{1}(\Omega)$. Thus the classical compact embedding $H^{\epsilon_{0}}(\Omega)\hookrightarrow L^{2}(\Omega)$ gives the embedding $H_{X,0}^{1}(\Omega)\hookrightarrow L^{2}(\Omega)$ is compact. That means  $u_{k_{j}}\to u$ in $L^2(\Omega)$. Therefore, the Riesz theorem allows us to find a subsequence $\{v_{j}\}_{j=1}^{\infty}\subset  \{u_{k_{j}}\}_{j=1}^{\infty}$  such that $v_{j}\to u$ almost everywhere on $\Omega$ as $j\to +\infty$. Hence, we conclude from Proposition \ref{prop2-2-1} that the embedding $H_{X,0}^1(\Omega)\hookrightarrow L^s(\Omega)$ is compact for $s\in [1,2_{\tilde{\nu}}^*)$.
 \end{proof}

Combining Proposition \ref{Poincare} and Proposition \ref{prop2-3}, we can prove the well-definedness of subelliptic Dirichlet eigenvalue problem.

\begin{proposition}
\label{prop2-4}
Assume $X$ and $\Omega$ satisfy the assumptions in Proposition \ref{Poincare}.  Then the subelliptic Dirichlet eigenvalue problem
  \begin{equation}\label{2-8}
    \left\{
      \begin{array}{ll}
    -\triangle_{X}u=\lambda u   & \hbox{for~$x\in\Omega$;} \\[2mm]
        u\in H_{X,0}^{1}(\Omega), &
      \end{array}
    \right.
  \end{equation}
  is well-defined, i.e. the self-adjoint operator $-\triangle_{X}$ admits a sequence of discrete Dirichlet eigenvalues $0<\lambda_1<\lambda_2\leq\cdots\leq\lambda_{k-1}\leq\lambda_k\leq\cdots$,
and $\lambda_{k}\to +\infty $ as $k\to +\infty$. Moreover, the corresponding eigenfunctions $\{\varphi_{k}\}_{k=1}^{\infty}$ constitute an orthonormal basis of $L^2(\Omega)$ and also an orthogonal basis of $H_{X,0}^{1}(\Omega)$.
\end{proposition}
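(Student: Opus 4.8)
The plan is to realize \eqref{2-8} through the spectral theorem for compact self-adjoint operators. First I would equip $H_{X,0}^{1}(\Omega)$ with the bilinear form $a(u,v):=\int_{\Omega}Xu\cdot Xv\,dx$. By the Friedrichs--Poincar\'{e} inequality \eqref{2-5} of Proposition \ref{Poincare}, $a(\cdot,\cdot)$ is an inner product whose norm is equivalent to $\|\cdot\|_{H_{X}^{1}(\Omega)}$, so $(H_{X,0}^{1}(\Omega),a)$ is a Hilbert space. For $f\in L^{2}(\Omega)$, the functional $v\mapsto\int_{\Omega}fv\,dx$ is bounded on $(H_{X,0}^{1}(\Omega),a)$ by the bounded embedding $H_{X,0}^{1}(\Omega)\hookrightarrow L^{2}(\Omega)$ from Corollary \ref{corollary2-1}, so the Riesz representation theorem furnishes a unique $u=:Tf\in H_{X,0}^{1}(\Omega)$ with $a(Tf,v)=\int_{\Omega}fv\,dx$ for all $v\in H_{X,0}^{1}(\Omega)$; this $u$ is exactly the weak solution of $-\triangle_{X}u=f$ with $u=0$ on $\partial\Omega$, and in particular every eigenfunction of $-\triangle_{X}$ lies in $H_{X,0}^{1}(\Omega)$.

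Next I would check that $T:L^{2}(\Omega)\to L^{2}(\Omega)$ (composed with the inclusion $H_{X,0}^{1}(\Omega)\hookrightarrow L^{2}(\Omega)$) is compact, self-adjoint and positive. Self-adjointness and positivity follow from the symmetry and coercivity of $a$: $\int_{\Omega}fTg\,dx=a(Tf,Tg)=\int_{\Omega}gTf\,dx$, and $\int_{\Omega}fTf\,dx=a(Tf,Tf)\geq 0$ with equality only if $Tf=0$, i.e. $f=0$, so $T$ is injective. Compactness is immediate from the degenerate Rellich--Kondrachov embedding $H_{X,0}^{1}(\Omega)\hookrightarrow\hookrightarrow L^{2}(\Omega)$ of Proposition \ref{prop2-3}. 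The Hilbert--Schmidt spectral theorem then yields an $L^{2}(\Omega)$-orthonormal basis $\{\varphi_{k}\}_{k=1}^{\infty}$ of eigenfunctions of $T$ with eigenvalues $\mu_{1}\geq\mu_{2}\geq\cdots>0$ and $\mu_{k}\to 0$; setting $\lambda_{k}:=\mu_{k}^{-1}$ gives $0<\lambda_{1}\leq\lambda_{2}\leq\cdots$ with $\lambda_{k}\to+\infty$ and $-\triangle_{X}\varphi_{k}=\lambda_{k}\varphi_{k}$ weakly, while $\lambda_{1}>0$ is precisely Proposition \ref{Poincare}. Testing the equation for $\varphi_{j}$ against $\varphi_{k}$ gives $a(\varphi_{j},\varphi_{k})=\lambda_{j}\int_{\Omega}\varphi_{j}\varphi_{k}\,dx=\lambda_{j}\delta_{jk}$, so $\{\varphi_{k}\}$ is $a$-orthogonal; and it is $a$-complete since $a(u,\varphi_{k})=0$ for all $k$ forces $\lambda_{k}\int_{\Omega}u\varphi_{k}\,dx=0$, hence $u=0$ in $L^{2}(\Omega)$ and thus in $H_{X,0}^{1}(\Omega)$.

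It remains to upgrade $\lambda_{1}\leq\lambda_{2}$ to the strict inequality $\lambda_{1}<\lambda_{2}$, i.e. to prove that $\lambda_{1}$ is simple, and this is the only non-routine point. I would argue that $|\varphi_{1}|\in H_{X,0}^{1}(\Omega)$ is again a minimizer of the Rayleigh quotient (as $|X|\varphi_{1}||=|X\varphi_{1}|$ a.e.), hence a nonnegative first eigenfunction; the hypoellipticity of $\triangle_{X}$ gives $\varphi_{1}\in C^{\infty}(\Omega)$, and the subelliptic Harnack inequality for nonnegative solutions of $-\triangle_{X}w=\lambda_{1}w$ on Hörmander vector fields (cf. the references cited in the introduction) forces $\varphi_{1}>0$ throughout the connected open set $\Omega$. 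If $\lambda_{1}$ were not simple there would be an eigenfunction $\psi$ for $\lambda_{1}$ with $\int_{\Omega}\varphi_{1}\psi\,dx=0$; but $|\psi|$ is again a nonnegative first eigenfunction, hence of one sign, which contradicts its $L^{2}$-orthogonality to the strictly positive $\varphi_{1}$. Everything else reduces to the standard Lax--Milgram and compact-self-adjoint-operator machinery, whose two essential inputs in the subelliptic setting---the Poincar\'{e} inequality and the compact embedding---have already been established in Propositions \ref{Poincare} and \ref{prop2-3}.
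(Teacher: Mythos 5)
Your proposal is correct and follows essentially the route the paper intends: the paper gives no separate proof of this proposition, merely noting that it follows by combining Proposition \ref{Poincare} with the compact embedding of Proposition \ref{prop2-3}, and the detailed machinery you use (coercive symmetric form, Riesz/Lax--Milgram inverse, compactness of the solution operator in $L^{2}(\Omega)$, spectral theorem for compact self-adjoint operators) is exactly what the paper spells out in its proof of Proposition \ref{prop2-5} for the Schr\"odinger operator $-\triangle_{X}+V$. You in fact go further than the paper on one point: the strict inequality $\lambda_{1}<\lambda_{2}$ (simplicity of the first eigenvalue), which the paper asserts but never argues, is handled in your last paragraph via positivity of $|\varphi_{1}|$, hypoellipticity, and the subelliptic Harnack inequality; this is sound provided $\Omega$ is connected (which is what ``domain'' means here, and connectedness in the Carnot--Carath\'eodory sense follows from H\"ormander's condition via Chow's theorem) and the Harnack inequality of Citti--Garofalo--Lanconelli \cite{Citti1993} applies to $-\triangle_{X}w=\lambda_{1}w$, so the chain of Harnack balls gives $\varphi_{1}>0$ in $\Omega$. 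It would strengthen the write-up to cite that Harnack reference explicitly rather than ``the references in the introduction,'' but no step of your argument fails; note also that the simplicity claim is never used later in the paper, so this extra work is a genuine (if optional) improvement on the exposition.
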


We further consider the Dirichlet eigenvalue problem for the subelliptic Schr\"{o}dinger operator $-\triangle_{X}+V$ on $\Omega$,
  \begin{equation}\label{2-9}
    \left\{
      \begin{array}{ll}
    -\triangle_{X}u+V(x)u=\mu u   & \hbox{for~$x\in\Omega$;} \\[2mm]
        u\in H_{X,0}^{1}(\Omega), &
      \end{array}
    \right.
  \end{equation}
 where $V\in L^{\frac{p_{1}}{2}}(\Omega)$ with $p_{1}>\tilde{\nu}$.   Proposition \ref{Poincare} and Proposition \ref{prop2-3} guarantee the validity of eigenvalue problem \eqref{2-9}.
 \begin{proposition}
\label{prop2-5}
 Suppose that $X$ and $\Omega$ satisfy the assumptions in Proposition \ref{Poincare}. If the potential term $V\in L^{\frac{p_{1}}{2}}(\Omega)$ with $p_{1}>\tilde{\nu}$, then the Dirichlet eigenvalue problem \eqref{2-9} of subelliptic Schr\"{o}dinger operator $-\triangle_{X}+V$ is well-defined, i.e. the self-adjoint operator $-\triangle_{X}+V$ admits a sequence of discrete Dirichlet eigenvalues $\mu_{1}\leq \mu_2\leq\cdots\leq\mu_k\leq\cdots$,
and $\mu_{k}\to +\infty $ as $k\to +\infty$.
\end{proposition}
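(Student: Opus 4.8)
The plan is to realise $-\triangle_{X}+V$ through its quadratic form and then apply the spectral theorem for compact self-adjoint operators, just as in Proposition \ref{prop2-4}, the only extra work being the control of the potential term. I would set
\[ a(u,v):=\int_{\Omega}Xu\cdot Xv\,dx+\int_{\Omega}V(x)uv\,dx,\qquad u,v\in H_{X,0}^{1}(\Omega), \]
and $Q(u):=a(u,u)$. The key — and, I expect, the only genuinely delicate — step is to show that the potential is \emph{infinitesimally form-bounded} with respect to $\int_{\Omega}|Xu|^{2}\,dx$: for every $\varepsilon>0$ there is $C_{\varepsilon}>0$ such that
\[ \int_{\Omega}|V(x)|\,|u|^{2}\,dx\leq \varepsilon\int_{\Omega}|Xu|^{2}\,dx+C_{\varepsilon}\int_{\Omega}|u|^{2}\,dx,\qquad\forall\,u\in H_{X,0}^{1}(\Omega). \]

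To obtain this I would argue as follows. Since $p_{1}>\tilde{\nu}\geq 3$ and $t\mapsto \tfrac{2t}{t-2}$ is decreasing on $(2,+\infty)$, the H\"older-conjugate exponent satisfies $\tfrac{2p_{1}}{p_{1}-2}<\tfrac{2\tilde{\nu}}{\tilde{\nu}-2}=2_{\tilde{\nu}}^{*}$, so H\"older's inequality (with exponents $\tfrac{p_{1}}{2}$ and $\tfrac{p_{1}}{p_{1}-2}$) together with the weighted Sobolev embedding of Corollary \ref{corollary2-1} gives
\[ \int_{\Omega}|V|\,|u|^{2}\,dx\leq \|V\|_{L^{p_{1}/2}(\Omega)}\,\|u\|_{L^{2p_{1}/(p_{1}-2)}(\Omega)}^{2}\leq C\,\|V\|_{L^{p_{1}/2}(\Omega)}\,\|u\|_{H_{X,0}^{1}(\Omega)}^{2}. \]
To make the right-hand side small I would split $V=V\mathbf{1}_{\{|V|\leq M\}}+V\mathbf{1}_{\{|V|>M\}}$: the bounded part contributes at most $M\|u\|_{L^{2}(\Omega)}^{2}$, while applying the estimate above to $V\mathbf{1}_{\{|V|>M\}}$, using $\|V\mathbf{1}_{\{|V|>M\}}\|_{L^{p_{1}/2}(\Omega)}\to 0$ as $M\to+\infty$ (dominated convergence), and invoking the Friedrichs--Poincar\'e inequality of Proposition \ref{Poincare} to absorb $\|u\|_{H_{X,0}^{1}}^{2}$ into $\|Xu\|_{L^{2}}^{2}$, makes the tail part $\leq \varepsilon\|Xu\|_{L^{2}(\Omega)}^{2}$ once $M$ is large. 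This yields the claimed inequality.

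Granting this, I would fix $\varepsilon=\tfrac12$ and choose $K>0$ so large that the symmetric bilinear form $a_{K}(u,v):=a(u,v)+K\int_{\Omega}uv\,dx$ satisfies $a_{K}(u,u)\geq \tfrac12\|u\|_{H_{X,0}^{1}(\Omega)}^{2}$; it is also continuous on $H_{X,0}^{1}(\Omega)$ (again by H\"older and Corollary \ref{corollary2-1}), so the Lax--Milgram theorem furnishes a bounded solution operator $T\colon L^{2}(\Omega)\to H_{X,0}^{1}(\Omega)$ with $a_{K}(Tf,v)=\langle f,v\rangle_{L^{2}(\Omega)}$ for all $v\in H_{X,0}^{1}(\Omega)$. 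Composing with the compact embedding $H_{X,0}^{1}(\Omega)\hookrightarrow L^{2}(\Omega)$ of Proposition \ref{prop2-3}, $T$ becomes a compact operator on $L^{2}(\Omega)$; symmetry of $a_{K}$ makes it self-adjoint and coercivity makes it positive definite, so the spectral theorem produces eigenvalues $\tau_{1}\geq\tau_{2}\geq\cdots>0$ with $\tau_{k}\to 0$ and an orthonormal eigenbasis of $L^{2}(\Omega)$. Since $Tu=\tau u$ is equivalent to $-\triangle_{X}u+Vu=(\tau^{-1}-K)u$, the numbers $\mu_{k}:=\tau_{k}^{-1}-K$ form a discrete nondecreasing sequence with $\mu_{k}\to+\infty$, bounded below (by $-C_{1/2}$), which is exactly the assertion; in passing this also gives the Courant--Fischer characterisation of the $\mu_{k}$ that is needed later. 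The main obstacle is genuinely the infinitesimal form-boundedness step — where the strict inequality $\tfrac{2p_{1}}{p_{1}-2}<2_{\tilde{\nu}}^{*}$ coming from $p_{1}>\tilde{\nu}$ is used crucially — after which the argument is the same functional-analytic machinery already used for Proposition \ref{prop2-4}.
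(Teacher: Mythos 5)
Your proposal is correct and follows essentially the same route as the paper: realize $-\triangle_{X}+V$ through its quadratic form, prove a lower bound of the type $\mathcal{Q}(u,u)\geq c\,\|u\|_{H_{X,0}^{1}(\Omega)}^{2}-C_{0}\|u\|_{L^{2}(\Omega)}^{2}$ using H\"older with exponents $\tfrac{p_{1}}{2}$, $\tfrac{p_{1}}{p_{1}-2}$, the embedding $H_{X,0}^{1}(\Omega)\hookrightarrow L^{\frac{2p_{1}}{p_{1}-2}}(\Omega)$ and the Friedrichs--Poincar\'e inequality, then shift by a constant, invert, and use the compact embedding $H_{X,0}^{1}(\Omega)\hookrightarrow L^{2}(\Omega)$ to get a compact self-adjoint resolvent and hence a discrete spectrum with $\mu_{k}\to+\infty$. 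The only cosmetic difference is how the smallness is produced: you truncate $V$ at level $M$ and absorb the tail, while the paper interpolates $\|u\|_{L^{2p_{1}/(p_{1}-2)}(\Omega)}$ between $\|u\|_{L^{2_{\tilde{\nu}}^{*}}(\Omega)}$ and $\|u\|_{L^{2}(\Omega)}$ and applies Young's inequality; both give the same semiboundedness and the remainder of the argument coincides.
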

\begin{proof}
 The corresponding quadratic form of $-\triangle_{X}+V$ is given by
 \begin{equation}\label{2-10}
   \mathcal{Q}(u,v)=\int_{\Omega}Xu\cdot Xv dx+\int_{\Omega}V(x)uvdx,\quad \forall u,v\in H_{X,0}^{1}(\Omega).
 \end{equation}
Since $2<\frac{2p_{1}}{p_{1}-2}<\frac{2\tilde{\nu}}{\tilde{\nu}-2}$, by \eqref{2-7} we have for all $u,v\in H_{X,0}^{1}(\Omega)$,
\begin{equation}
\begin{aligned}\label{2-11}
|\mathcal{Q}(u,v)|&\leq \|u\|_{H_{X,0}^{1}(\Omega)}\|v\|_{H_{X,0}^{1}(\Omega)}+\|V\|_{L^{\frac{p_{1}}{2}}(\Omega)}\|u\|_{L^{\frac{2p_{1}}{p_{1}-2}}(\Omega)}\|v\|_{L^{\frac{2p_{1}}{p_{1}-2}}(\Omega)}\\
&\leq \|u\|_{H_{X,0}^{1}(\Omega)}\|v\|_{H_{X,0}^{1}(\Omega)}+\|V\|_{L^{\frac{p_{1}}{2}}(\Omega)}\|u\|_{H_{X,0}^{1}(\Omega)}\|v\|_{H_{X,0}^{1}(\Omega)}.
\end{aligned}
\end{equation}
On the other hand, using H\"{o}lder inequality and Young's inequality we obtain
\begin{equation}
\begin{aligned}\label{2-12}
\mathcal{Q}(u,u)&\geq \frac{\lambda_{1}}{1+\lambda_{1}} \|u\|_{H_{X,0}^{1}(\Omega)}^{2}-\|V\|_{L^{\frac{p_{1}}{2}}(\Omega)}\|u\|_{L^{\frac{2p_{1}}{p_{1}-2}}(\Omega)}^{2}\\
&\geq \frac{\lambda_{1}}{1+\lambda_{1}}\|u\|_{H_{X,0}^{1}(\Omega)}^{2}-\|V\|_{L^{\frac{p_{1}}{2}}(\Omega)}\|u\|_{L^{\frac{2\tilde{\nu}}{\tilde{\nu}-2}}(\Omega)}^{2\frac{\tilde{\nu}}{p_{1}}}\|u\|_{L^{2}(\Omega)}^{2(1-\frac{\tilde{\nu}}{p_{1}})}\\
&\geq \frac{\lambda_{1}}{1+\lambda_{1}}\|u\|_{H_{X,0}^{1}(\Omega)}^{2}-\|V\|_{L^{\frac{p_{1}}{2}}(\Omega)}\left(\varepsilon\|u\|_{L^{\frac{2\tilde{\nu}}{\tilde{\nu}-2}}(\Omega)}^{2}+\varepsilon^{-\frac{\tilde{\nu}}{p_{1}-\tilde{\nu}}}\|u\|_{L^{2}(\Omega)}^{2}\right)
\end{aligned}
\end{equation}
for any $\varepsilon>0$ and any $u\in H_{X,0}^{1}(\Omega)$. Using \eqref{2-7} again and choosing suitable $\varepsilon>0$, we derive from \eqref{2-12} that
\begin{equation}\label{2-13}
  \mathcal{Q}(u,u)\geq \frac{\lambda_{1}}{2(1+\lambda_{1})}\|u\|_{H_{X,0}^{1}(\Omega)}^{2}-C_{0}\|u\|_{L^2(\Omega)}^{2},
\end{equation}
where $C_{0}>0$ is a positive constant depending on $V$.

Consider the quadratic form $\widetilde{\mathcal{Q}}(u,v):=\mathcal{Q}(u,v)+C_{0}(u,v)_{L^2(\Omega)}$ associated with the operator $-\triangle_{X}+V+C_{0}$. It follows from \eqref{2-10}-\eqref{2-13} that $\widetilde{\mathcal{Q}}(u,v)$ is positive, symmetric and closed in $L^2(\Omega)$. Therefore, by the theory of quadratic form (see \cite[Theorem 53.24]{Driver2003}), we know the corresponding operator $\mathcal{L}:=-\triangle_{X}+V+C_{0}$ is a positive self-adjoint operator in $L^2(\Omega)$ with the domain \[\text{dom}(\mathcal{L}):=\{u\in H_{X,0}^{1}(\Omega)|\exists c\geq 0~\mbox{such that}~|\widetilde{\mathcal{Q}}(u,v)|\leq c\|v\|_{L^2(\Omega)}~\forall v\in H_{X,0}^{1}(\Omega)\}. \]
Furthermore,  \eqref{2-13} indicates $\widetilde{\mathcal{Q}}(u,v)$ is coercive. Combining the Riesz representation theorem and Proposition \ref{Poincare}, we can obtain the existence of $\mathcal{L}^{-1}$ and it is also a bounded linear operator
from $L^2(\Omega)$ to $H_{X,0}^{1}(\Omega)$. Using Proposition \ref{prop2-3},  we obtain that $\mathcal{L}^{-1}$ is a compact operator from $L^2(\Omega)$ to $L^2(\Omega)$ since $H_{X,0}^{1}(\Omega)$ can be compactly embedded into $L^2(\Omega)$. From the spectral theory, the subelliptic Schr\"{o}dinger operator $-\triangle_{X}+V$
 has a sequence of discrete Dirichlet eigenvalues $\mu_{1}\leq \mu_2\leq\cdots\leq\mu_{k-1}\leq\mu_k\leq\cdots$,
and $\mu_{k}\to +\infty $ as $k\to +\infty$.
\end{proof}

For the Dirichlet eigenvalue of subelliptic Schr\"{o}dinger operator $-\triangle_{X}+V$, we have the following degenerate
Cwikel-Lieb-Rozenblum inequality.
\begin{proposition}\label{CLR}
Let $p_{1}>\tilde{\nu}$ be a given positive constant and $V\in L^{\frac{p_{1}}{2}}(\Omega)$ be a function satisfying $V\leq 0$, then there exists a positive constant $C>0$ such that
\begin{equation}
  N(0,-\triangle_{X}+V)\leq C\int_{\Omega}|V(x)|^{\frac{\tilde{\nu}}{2}}dx,
\end{equation}
where $N(0,-\triangle_{X}+V):=\#\{k|\mu_{k}<0 \}$ denotes the numbers of negative Dirichlet eigenvalue of $-\triangle_{X}+V$, and $\tilde{\nu}$ is the generalized M\'{e}tivier index of $X$.
\end{proposition}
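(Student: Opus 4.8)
The plan is to reduce the degenerate CLR bound to the classical CLR inequality for the standard Laplacian on $\mathbb{R}^n$ (or on a Euclidean ball containing $\Omega$) via a heat-kernel comparison argument, following the Lieb–Rozenblum route through the Feynman–Kac formula. The key object is the subelliptic heat semigroup $e^{t\triangle_X}$ on $L^2(\Omega)$ with Dirichlet boundary conditions, whose kernel $p_t^\Omega(x,y)$ satisfies, by the domain monotonicity of Dirichlet heat kernels, $0\le p_t^\Omega(x,y)\le p_t(x,y)$, where $p_t(x,y)$ is the free subelliptic heat kernel associated to $\triangle_X$ on $W$ (or on $\mathbb{R}^n$ after extending the vector fields). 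The decisive input is the on-diagonal Gaussian-type upper bound for $p_t$: there is a constant $C>0$ such that
\[
p_t(x,x)\le \frac{C}{|B(x,\sqrt t)|}\qquad\text{for }0<t\le 1,\ x\in\overline\Omega,
\]
where $|B(x,r)|$ is the Carnot–Carathéodory ball volume. First I would record the local volume estimate $|B(x,r)|\ge c\,r^{\tilde\nu}$ for $0<r\le r_0$ uniformly in $x\in\overline\Omega$, which is exactly the content of the generalized Métivier index $\tilde\nu$ (Definition \ref{def2-1}) together with the Nagel–Stein–Wainger ball-volume formula; this yields the ultracontractive bound $\|e^{t\triangle_X}\|_{L^1\to L^\infty}\le C\,t^{-\tilde\nu/2}$ for $0<t\le 1$.

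Next I would invoke the abstract CLR principle of Lieb (and its semigroup form due to Rozenblum, Li–Yau, and later Frank): if a nonnegative self-adjoint operator $H_0$ on $L^2(\Omega)$ generates a positivity-preserving semigroup with $\|e^{-tH_0}\|_{L^1\to L^\infty}\le M\,t^{-\tilde\nu/2}$ for $0<t\le 1$ (and bounded for $t\ge1$, which follows here from $\lambda_1>0$ in Proposition \ref{Poincare}), then for any $V\le 0$,
\[
N(0,H_0+V)\le C_{\tilde\nu}\int_\Omega |V(x)|^{\tilde\nu/2}\,dx.
\]
The standard proof estimates, for a fixed $\gamma\in(0,1)$, the number of eigenvalues of $H_0+V$ below $0$ by $e^{\gamma}$ times the trace $\operatorname{Tr}\big(e^{-(H_0+\gamma|V|)}\,\mathbf 1_{\{H_0+V<0\}}\big)$, expands $e^{-(H_0+\gamma|V|)}$ by the Feynman–Kac formula, and applies the Golden–Thompson / Jensen inequality together with the ultracontractivity bound on $p_t^\Omega(x,x)\le p_t(x,x)$; the $t$-integral $\int_0^\infty t^{-\tilde\nu/2}(e^{t|V(x)|}-1)\,\frac{dt}{t}\cdot(\text{cutoff})$ is then controlled and, after optimizing $\gamma$ and splitting the time integral at $t=1$ (using the $t\ge 1$ boundedness from the spectral gap), produces $\int_\Omega |V|^{\tilde\nu/2}$. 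All of this is purely operator-theoretic once the semigroup bound $\|e^{t\triangle_X}\|_{L^1\to L^\infty}\le C t^{-\tilde\nu/2}$, $0<t\le1$, is in hand; I would cite the classical references (Lieb, Rozenblum, Cwikel, and the heat-kernel exposition in Li–Yau or Frank's lecture notes) for the abstract step rather than reproving it.

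The main obstacle is establishing the ultracontractive bound with the sharp exponent $\tilde\nu$ rather than the larger local homogeneous dimension $\tilde Q$. The naive heat-kernel estimates available in the literature (Sánchez-Calle, Nagel–Stein–Wainger, Jerison–Sánchez-Calle) give $p_t(x,x)\lesssim |B(x,\sqrt t)|^{-1}$, and the gain from $\tilde Q$ to $\tilde\nu$ comes precisely from the lower volume bound $|B(x,r)|\gtrsim r^{\tilde\nu}$ that holds uniformly on the compact set $\overline\Omega$ — this is where Yung's sharp analysis (underlying Proposition \ref{pro2-2} and Corollary \ref{corollary2-1}) enters, since the Sobolev exponent $2\tilde\nu/(\tilde\nu-2)$ is equivalent, via the Varopoulos–Saloff-Coste–Coulhon machinery, to exactly this ultracontractivity rate. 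An alternative, cleaner route that avoids heat kernels altogether is to deduce the Lieb-type bound directly from the Sobolev inequality of Corollary \ref{corollary2-1}: the bounded embedding $H^1_{X,0}(\Omega)\hookrightarrow L^{2\tilde\nu/(\tilde\nu-2)}(\Omega)$ is equivalent to a Nash/Sobolev inequality which, by the Varopoulos equivalence theorem, gives the ultracontractive estimate with exponent $\tilde\nu$; then the abstract CLR principle applies verbatim. I would take whichever of these two the paper's later sections make most convenient, but in either case the crux is the passage to the optimal exponent $\tilde\nu$, and everything downstream is standard.
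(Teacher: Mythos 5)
Your proposal is essentially correct, but its primary route differs from the paper's. The paper does not go through heat kernels at all: it combines the Friedrichs--Poincar\'e inequality (Proposition \ref{Poincare}) with Yung's embedding (Proposition \ref{pro2-2}) to get the Sobolev inequality $\|u\|_{L^{2\tilde{\nu}/(\tilde{\nu}-2)}(\Omega)}^{2}\leq C\int_{\Omega}|Xu|^{2}dx$, verifies that the form $\mathcal{Q}_{1}(u,u)=\int_{\Omega}|Xu|^{2}dx$ satisfies the Beurling--Deny conditions (using that $|u|$ and $\min(1,u)$ lie in $H_{X,0}^{1}(\Omega)$ with non-increasing form, quoted from \cite{Chen-Chen-Li2022}), and then applies the abstract theorem of Frank--Lieb--Seiringer \cite{Frank2010} (with \cite{Levin1997}) which converts a Sobolev inequality for a Dirichlet form directly into the CLR bound with exponent $\tilde{\nu}/2$. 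This is exactly the ``alternative, cleaner route'' you sketch in your last sentences, except that the paper's abstract input is the Sobolev-to-CLR equivalence of \cite{Frank2010} rather than the Varopoulos equivalence plus Lieb's Feynman--Kac argument. Your main route (on-diagonal bound $p_{t}(x,x)\lesssim |B(x,\sqrt{t})|^{-1}$, uniform volume lower bound $|B(x,r)|\gtrsim r^{\tilde{\nu}}$ on $\overline{\Omega}$, ultracontractivity, then the semigroup form of CLR) is viable but heavier: the uniform volume bound with exponent $\tilde{\nu}$ needs a compactness argument on top of Nagel--Stein--Wainger (the pointwise exponent $\nu(x)$ is not uniform), the local heat-kernel estimates must be patched to the Dirichlet semigroup on $\Omega$, and --- a point you gloss over --- the positivity preservation/Markov property you need for Feynman--Kac or for Lieb's trace argument is precisely the Beurling--Deny property that the paper verifies explicitly; it is not automatic for an abstract form and should be checked in your route as well. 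What your heat-kernel route would buy is independence from \cite{Frank2010}; what the paper's route buys is a much shorter proof whose only analytic inputs are already established earlier in Section \ref{Section2}.
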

\begin{proof}
Combining Proposition \ref{Poincare} and Proposition \ref{pro2-2}, we have the following Sobolev inequality
\begin{equation}\label{2-15}
  \left(\int_{\Omega}|u|^{\frac{2\tilde{\nu}}{\tilde{\nu}-2}}dx\right)^{\frac{\tilde{\nu}-2}{2\tilde{\nu}}}\leq C\left(\int_{\Omega}|Xu|^{2}dx\right)^{\frac{1}{2}}~~~\forall u\in H_{X,0}^{1}(\Omega),
\end{equation}
where $C>0$ is a positive constant. Then, we consider the quadratic form
\[ \mathcal{Q}_{1}(u,u)=\int_{\Omega}|Xu|^{2}dx\]
defined on  $H_{X,0}^{1}(\Omega)$.  By
\cite[Remark 7.1]{Chen-Chen-Li2022}, we see that for any
 $u\in H_{X,0}^{1}(\Omega)$,  $|u|\in H_{X,0}^{1}(\Omega)$ and
\begin{equation}\label{2-17}
  \mathcal{Q}_{1}(|u|,|u|)=\int_{\Omega}|X|u||^{2}dx\leq \int_{\Omega}|Xu|^{2}dx=\mathcal{Q}_{1}(u,u).
\end{equation}
Additionally,  we can obtain from \cite[Remark 7.1]{Chen-Chen-Li2022} that if $u\in H_{X,0}^{1}(\Omega)$, then
\begin{equation}\label{2-18}
\min(1,u)=u-\max\{u-1,0\}\in H_{X,0}^{1}(\Omega)
\end{equation}
 and
\begin{equation}\label{2-19}
  \mathcal{Q}_{1}(\min(1,u),\min(1,u))\leq \mathcal{Q}_{1}(u,u).
\end{equation}
Recalling $H_{X,0}^{1}(\Omega)$ is dense in $L^2(\Omega)$ and the corresponding self-adjoint operator $-\triangle_{X}$ is real, \eqref{2-17}-\eqref{2-19} indicate the
Beurling-Deny conditions (cf. \cite[Assumption 2.1]{Frank2010} and \cite{Levin1997}) of $\mathcal{Q}_{1}(u,v)$. Using   \cite[Theorem 2.1]{Frank2010} and \eqref{2-15}, we obtain the degenerate Cwikel-Lieb-Rozenblum inequality
\begin{equation}
  N(0,-\triangle_{X}+V)\leq C\int_{\Omega}|V(x)|^{\frac{\tilde{\nu}}{2}}dx.
\end{equation}

\end{proof}

\subsection{Some estimates in critical point theory}
In this part, we give some estimates in critical point theory.
\begin{proposition}
\label{prop2-7}
Let $\Omega\subset \mathbb{R}^{n}$ be a bounded open domain and $h:\Omega\times \mathbb{R}\to \mathbb{R}$ be a Carath\'{e}odory function satisfying the growth condition:
\[|h(x,u)|\leq a(x)+b|u|^{\frac{q_{1}}{q_{2}}}~~\forall (x,u)\in \Omega\times \mathbb{R},\]
where $b>0$ is a positive constant and $a(x)\geq 0$ with $a(x)\in L^{q_{2}}(\Omega)$. Then the map $u(x)\mapsto h(x,u(x))$ belongs to $ C(L^{q_{1}}(\Omega), L^{q_{2}}(\Omega))$ for any $q_{1}\geq 1$ and $q_{2}\geq 1$.
\end{proposition}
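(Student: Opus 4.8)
The plan is to recognize Proposition \ref{prop2-7} as the classical continuity theorem for Nemytskii (superposition) operators and to establish it in two stages: first that the map $N_h:u\mapsto h(\cdot,u(\cdot))$ sends $L^{q_1}(\Omega)$ into $L^{q_2}(\Omega)$, and then that it is continuous. For the first stage I would note that $x\mapsto h(x,u(x))$ is measurable whenever $u$ is measurable — a standard consequence of the Carath\'eodory property, obtained by approximating $u$ by simple functions and using the measurability of $h(\cdot,c)$ for each constant $c$ together with the continuity of $h(x,\cdot)$. Then, using the growth hypothesis together with the elementary inequality $(s+t)^{q_2}\leq 2^{q_2}(s^{q_2}+t^{q_2})$ for $s,t\geq 0$ and $q_2\geq 1$, and noting that $\bigl(|u|^{q_1/q_2}\bigr)^{q_2}=|u|^{q_1}$, one gets
\[ \int_{\Omega}|h(x,u(x))|^{q_2}\,dx\leq 2^{q_2}\Bigl(\|a\|_{L^{q_2}(\Omega)}^{q_2}+b^{q_2}\|u\|_{L^{q_1}(\Omega)}^{q_1}\Bigr)<\infty, \]
so that $N_h(u)\in L^{q_2}(\Omega)$.

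For the continuity, given $u_k\to u$ in $L^{q_1}(\Omega)$, I would invoke the subsequence principle: it suffices to show that every subsequence of $\{u_k\}$ admits a further subsequence along which $N_h$ converges to $N_h(u)$ in $L^{q_2}(\Omega)$. Along any such subsequence, the Riesz--Fischer theorem furnishes a sub-subsequence $\{u_{k_j}\}$ converging almost everywhere on $\Omega$ to $u$ and dominated by a fixed function $w\in L^{q_1}(\Omega)$. The continuity of $h(x,\cdot)$ then gives $h(x,u_{k_j}(x))\to h(x,u(x))$ for a.e.\ $x\in\Omega$, while the growth bound supplies the $j$-independent integrable majorant
\[ |h(x,u_{k_j}(x))-h(x,u(x))|^{q_2}\leq C\bigl(a(x)^{q_2}+b^{q_2}w(x)^{q_1}\bigr), \]
so that the dominated convergence theorem yields $\|N_h(u_{k_j})-N_h(u)\|_{L^{q_2}(\Omega)}\to 0$ and closes the argument.

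I expect the only genuinely nontrivial ingredients to be the measurability of the composition $h(\cdot,u(\cdot))$ and the refined Riesz--Fischer statement that $L^{q_1}$-convergence produces an a.e.-convergent subsequence with an $L^{q_1}$ dominating function; both are classical, and the subsequence-extraction device is precisely what circumvents the absence of a single dominating function for the entire sequence $\{u_k\}$, which is the one place where a naive direct application of dominated convergence would fail.
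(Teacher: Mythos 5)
Your argument is correct, and it is a complete, self-contained proof of the statement, whereas the paper does not prove it at all: its ``proof'' is a one-line citation to Krasnosel'skii's book on superposition (Nemytskii) operators. Your route --- measurability of $x\mapsto h(x,u(x))$ from the Carath\'eodory property, the growth bound plus $(s+t)^{q_2}\leq 2^{q_2}(s^{q_2}+t^{q_2})$ to map $L^{q_1}(\Omega)$ into $L^{q_2}(\Omega)$, and then the subsequence principle combined with the refined Riesz--Fischer lemma (an a.e.-convergent sub-subsequence dominated by a fixed $w\in L^{q_1}(\Omega)$) and dominated convergence --- is the standard modern proof and is exactly the device that repairs the failure of a naive dominated-convergence argument for the whole sequence; Krasnosel'skii's original treatment reaches the same conclusion by a somewhat different, more measure-theoretic contradiction argument, so what you gain is a short transparent proof, while the paper gains only brevity by outsourcing it. One small point worth making explicit: in your integrable majorant for $|h(x,u_{k_j}(x))-h(x,u(x))|^{q_2}$ you also need to control the term $|h(x,u(x))|$, which requires $|u|\leq w$ a.e.; this is immediate since $u$ is the a.e.\ limit of the $u_{k_j}$ with $|u_{k_j}|\leq w$, but it should be said (or the majorant written as $C\bigl(a^{q_2}+b^{q_2}w^{q_1}+b^{q_2}|u|^{q_1}\bigr)$, which is equally integrable).
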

\begin{proof}
See \cite{Krasnosel'skii1964}.
\end{proof}

\begin{proposition}
\label{prop2-8}
Let $X=(X_1,X_{2},\ldots,X_m)$ and $\Omega$ satisfy the conditions in Proposition \ref{Poincare}. Suppose
$ f_{0}: \Omega\times \mathbb{R}\to \mathbb{R}$ is a Carath\'{e}odory function satisfy the growth condition:
\begin{equation}\label{2-21}
  |f_{0}(x,u)|\leq a_{1}(x)+b_{1}|u|^{s_{1}}~~\forall (x,u)\in \Omega\times \mathbb{R},
\end{equation}
where $0\leq s_{1}<\frac{\tilde{\nu}+2}{\tilde{\nu}-2}$, $b_{1}\geq 0$ and $a_{1}(x)\geq 0$ with $a_{1}(x)\in L^{\frac{2\tilde{\nu}}{\tilde{\nu}+2}}(\Omega)$. Denoting by $F_{0}(x,u)=\int_{0}^{u}f_{0}(x,v)dv$, then
$J(u)=\int_{\Omega}F_{0}(x,u(x))dx$ defines a $C^1$-functional on $H_{X,0}^1(\Omega)$, that is to say, the Fr\'{e}chet derivative of $J$ exists and continuous on $H_{X,0}^1(\Omega)$. Furthermore, the Fr\'{e}chet derivative (denoted by $DJ$)  given by
 \[\langle DJ(u),v\rangle=\int_{\Omega}f_{0}(x,u)vdx~~\forall v\in H_{X,0}^1(\Omega)\]
maps the bounded sets in $H_{X,0}^{1}(\Omega)$ to the relatively compact sets in $H_{X}^{-1}(\Omega)$, where $\langle\cdot, \cdot\rangle$ denotes the pairing between $H_{X}^{-1}(\Omega)$ and $H_{X,0}^1(\Omega)$, and $H_{X}^{-1}(\Omega)$ is the dual space of $H_{X,0}^{1}(\Omega)$.
\end{proposition}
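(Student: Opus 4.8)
The plan is to reduce everything to the compactness and boundedness tools already established, namely the sharp subelliptic Sobolev embedding of Proposition \ref{pro2-2}, the degenerate Rellich–Kondrachov compactness of Proposition \ref{prop2-3}, and the Nemytskii-type continuity statement of Proposition \ref{prop2-7}. The key observation linking them is the identity of exponents: by Corollary \ref{corollary2-1} we have the continuous embedding $H_{X,0}^{1}(\Omega)\hookrightarrow L^{q}(\Omega)$ for every $q\in[1,2_{\tilde\nu}^{*}]$, and the growth exponent $s_{1}<\frac{\tilde\nu+2}{\tilde\nu-2}$ is exactly calibrated so that $(s_{1}+1)\cdot\frac{2\tilde\nu}{\tilde\nu+2}<\frac{2\tilde\nu}{\tilde\nu-2}$; this is the numerical fact that makes the whole argument run.

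First I would show $J$ is well-defined and finite on $H_{X,0}^{1}(\Omega)$: integrating the bound \eqref{2-21} gives $|F_{0}(x,u)|\le a_{1}(x)|u|+\frac{b_{1}}{s_{1}+1}|u|^{s_{1}+1}$, and both terms are integrable because $a_{1}\in L^{\frac{2\tilde\nu}{\tilde\nu+2}}(\Omega)\subset L^{1}(\Omega)$ paired with $u\in L^{\frac{2\tilde\nu}{\tilde\nu-2}}(\Omega)$ via Hölder, while $|u|^{s_{1}+1}\in L^{1}(\Omega)$ since $s_{1}+1\le\frac{2\tilde\nu}{\tilde\nu-2}$. Next, to identify the Gâteaux derivative, I would differentiate under the integral sign along a ray $u+tv$, using the mean value theorem and the growth bound to produce a dominating function in $L^{1}(\Omega)$ (again Hölder with the Sobolev embedding), yielding $\langle DJ(u),v\rangle=\int_{\Omega}f_{0}(x,u)v\,dx$. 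To upgrade Gâteaux to Fréchet differentiability and continuity of $DJ$, the decisive step is to apply Proposition \ref{prop2-7} with $h=f_{0}$, $q_{1}=\frac{2\tilde\nu}{\tilde\nu-2}$ and $q_{2}=\frac{2\tilde\nu}{\tilde\nu+2}$, so that $\frac{q_{1}}{q_{2}}=\frac{\tilde\nu+2}{\tilde\nu-2}>s_{1}$ (a $|u|^{s_{1}}$ growth is dominated by $|u|^{q_{1}/q_{2}}$ on the finite-measure set after absorbing into $a_{1}$): this gives that $u\mapsto f_{0}(x,u)$ is continuous from $L^{\frac{2\tilde\nu}{\tilde\nu-2}}(\Omega)$ into $L^{\frac{2\tilde\nu}{\tilde\nu+2}}(\Omega)$. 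Composing with the bounded embeddings $H_{X,0}^{1}(\Omega)\hookrightarrow L^{\frac{2\tilde\nu}{\tilde\nu-2}}(\Omega)$ and the dual embedding $L^{\frac{2\tilde\nu}{\tilde\nu+2}}(\Omega)\hookrightarrow H_{X}^{-1}(\Omega)$ shows $DJ:H_{X,0}^{1}(\Omega)\to H_{X}^{-1}(\Omega)$ is continuous, and a standard argument (continuity of the Gâteaux derivative implies Fréchet differentiability) finishes the $C^{1}$ claim.

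For the compactness assertion, let $\{u_{k}\}$ be bounded in $H_{X,0}^{1}(\Omega)$. By Proposition \ref{prop2-3} the embedding $H_{X,0}^{1}(\Omega)\hookrightarrow L^{s}(\Omega)$ is compact for every $s<2_{\tilde\nu}^{*}$; choosing $s$ with $s_{1}+1\le s<2_{\tilde\nu}^{*}$ — which is possible precisely because $s_{1}<\frac{\tilde\nu+2}{\tilde\nu-2}$ forces $s_{1}+1<2_{\tilde\nu}^{*}$ — we extract a subsequence converging strongly in $L^{s}(\Omega)$. Then by Proposition \ref{prop2-7}, applied now with $q_{1}=s$ and a suitable $q_{2}$ satisfying $q_{2}\le\frac{2\tilde\nu}{\tilde\nu+2}$ and $\frac{q_{1}}{q_{2}}\ge s_{1}$, the functions $f_{0}(x,u_{k})$ converge strongly in $L^{q_{2}}(\Omega)$, hence in $H_{X}^{-1}(\Omega)$ after composing with $L^{q_{2}}(\Omega)\hookrightarrow L^{\frac{2\tilde\nu}{\tilde\nu+2}}(\Omega)\hookrightarrow H_{X}^{-1}(\Omega)$; therefore $\{DJ(u_{k})\}$ has a convergent subsequence in $H_{X}^{-1}(\Omega)$, i.e. $DJ$ maps bounded sets to relatively compact sets. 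The main obstacle — really a bookkeeping point rather than a deep difficulty — is verifying at each stage that the pair of exponents fed into Proposition \ref{prop2-7} is admissible (both $\ge 1$ and with the right ratio) and that $\frac{2\tilde\nu}{\tilde\nu+2}$ is indeed the conjugate of the Sobolev exponent $\frac{2\tilde\nu}{\tilde\nu-2}$ so that the duality $L^{\frac{2\tilde\nu}{\tilde\nu+2}}(\Omega)\hookrightarrow H_{X}^{-1}(\Omega)$ holds; once the strict inequality $s_{1}<\frac{\tilde\nu+2}{\tilde\nu-2}$ is exploited, every embedding used is either Corollary \ref{corollary2-1} or Proposition \ref{prop2-3}, and no new estimate beyond the excerpt is required.
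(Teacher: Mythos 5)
Your proposal is correct and follows essentially the same route as the paper: well-definedness via H\"older and the embedding of Corollary \ref{corollary2-1}, identification of the G\^ateaux derivative, continuity of $u\mapsto f_{0}(\cdot,u)$ via Proposition \ref{prop2-7} combined with the Sobolev/Rellich--Kondrachov embeddings and the duality estimate into $H_{X}^{-1}(\Omega)$, the standard G\^ateaux-to-Fr\'echet upgrade, and compactness of $DJ$ by extracting a strongly $L^{s}$-convergent subsequence with $s<2_{\tilde{\nu}}^{*}$ (the paper works with the single subcritical exponent $\frac{2\tilde{\nu}s_{2}}{\tilde{\nu}+2}$, $s_{2}=\max\{s_{1},\frac{\tilde{\nu}+2}{2\tilde{\nu}}\}$, for both steps, but your variant is equivalent). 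One small correction: on a bounded domain $L^{q_{2}}(\Omega)\hookrightarrow L^{\frac{2\tilde{\nu}}{\tilde{\nu}+2}}(\Omega)$ fails for $q_{2}<\frac{2\tilde{\nu}}{\tilde{\nu}+2}$ (the inclusion goes the other way), and the duality step likewise forces $q_{2}\geq\frac{2\tilde{\nu}}{\tilde{\nu}+2}$ while $a_{1}\in L^{q_{2}}$ forces $q_{2}\leq\frac{2\tilde{\nu}}{\tilde{\nu}+2}$, so in your compactness step you must take $q_{2}=\frac{2\tilde{\nu}}{\tilde{\nu}+2}$ exactly --- which your own constraints permit, since $s\geq s_{1}+1>s_{1}\cdot\frac{2\tilde{\nu}}{\tilde{\nu}+2}$, so this is only a bookkeeping slip, not a gap.
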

\begin{proof}
From \eqref{2-21} we have
\begin{equation}
|F_{0}(x,u)|\leq \left|\int_{0}^{u}|f_{0}(x,v)|dv\right|\leq a_{1}(x)|u|+\frac{b_{1}}{s_{1}+1}|u|^{s_{1}+1}.
\end{equation}
Hence for any $u\in H_{X,0}^{1}(\Omega)$,
\[\begin{aligned}
|J(u)|&\leq \int_{\Omega}|F_{0}(x,u(x))|dx\leq   \int_{\Omega}a_{1}(x)|u|dx+\frac{b_{1}}{s_{1}+1}\int_{\Omega}|u|^{s_{1}+1}dx\\
&\leq \|a_{1}\|_{L^{\frac{2\tilde{\nu}}{\tilde{\nu}+2}}(\Omega)}\|u\|_{L^{\frac{2\tilde{\nu}}{\tilde{\nu}-2}}(\Omega)}+C\|u\|_{H_{X,0}^{1}(\Omega)}^{s_{1}+1}\leq C\|a_{1}\|_{L^{\frac{2\tilde{\nu}}{\tilde{\nu}+2}}(\Omega)}\|u\|_{H_{X,0}^{1}(\Omega)}+C\|u\|_{H_{X,0}^{1}(\Omega)}^{s_{1}+1}.
\end{aligned}\]
That means the functional $J(u)$ is well-defined on $H_{X,0}^{1}(\Omega)$. Besides, we have
\begin{equation}
\begin{aligned}\label{2-23}
&\left|\int_{\Omega}f_{0}(x,u)vdx \right|\leq \int_{\Omega}\left(a_{1}(x)|v|+b_{1}|u|^{s_{1}}|v|\right)dx\\
&\leq \int_{\Omega}\left(a_{1}(x)|v|+C|v|+C|u|^{\frac{\tilde{\nu}+2}{\tilde{\nu}-2}}|v|\right)dx\\
&\leq C\left(\|a_{1}\|_{L^{\frac{2\tilde{\nu}}{\tilde{\nu}+2}}(\Omega)}+1\right)\|v\|_{H_{X,0}^{1}(\Omega)}+C\left(\int_{\Omega}\left( |u|^{\frac{\tilde{\nu}+2}{\tilde{\nu}-2}} \right)^{\frac{2\tilde{\nu}}{\tilde{\nu}+2}} dx\right)^{\frac{\tilde{\nu}+2}{2\tilde{\nu}}} \|v\|_{L^{\frac{2\tilde{\nu}}{\tilde{\nu}-2}}(\Omega)}\\
&\leq C \left(\|a_{1}\|_{L^{\frac{2\tilde{\nu}}{\tilde{\nu}+2}}(\Omega)}+1+\|u\|_{H_{X,0}^{1}(\Omega)}^{\frac{\tilde{\nu}+2}{\tilde{\nu}-2}}\right)\|v\|_{H_{X,0}^{1}(\Omega)},
\end{aligned}
\end{equation}
which implies the G\^{a}teaux derivative $J'(u)$ of functional $J$ at $u\in H_{X,0}^{1}(\Omega)$ given by
 \begin{equation}\label{2-24}
  \langle J'(u),v\rangle=\int_{\Omega}f_{0}(x,u)vdx\qquad \forall v\in H_{X,0}^1(\Omega)
 \end{equation}
is well-defined and belongs to $H_{X}^{-1}(\Omega)$.

On the other hand, for $u,u_0 \in H_{X,0}^1(\Omega)$, we have
\begin{equation}\label{2-25}
 \begin{aligned}
 &\|J'(u)-J'(u_0)\|_{{H_X^{-1}}(\Omega)}=\sup\limits_{
 {v\in H_{X,0}^1(\Omega)},{\|v\|_{{H_{X,0}^1}(\Omega)}\leq1}}|\langle J'(u)-J'(u_0),v\rangle|   \\
 &\leq\sup\limits_{{v\in H_{X,0}^1(\Omega)},
{\|v\|_{{H_{X,0}^1}(\Omega)}\leq1}}\int_{\Omega}|f_{0}(x,u)-f_{0}(x,u_0)||v|dx\\
 &\leq\sup\limits_{
 {v\in H_{X,0}^1(\Omega)}
,{\|v\|_{{H_{X,0}^1}(\Omega)}\leq1}}\left(\int_{\Omega}|f_{0}(x,u)-f_{0}(x,u_0)|^{\frac{2\tilde{\nu}}{\tilde{\nu}+2}}dx\right)^{\frac{\tilde{\nu}+2}{2\tilde{\nu}}}\|v\|_{L^{\frac{2\tilde{\nu}}{\tilde{\nu}-2}}(\Omega)}\\
 &\leq C\sup\limits_{
 {v\in H_{X,0}^1(\Omega)}
,{\|v\|_{{H_{X,0}^1}(\Omega)}\leq1}}\left(\int_{\Omega}|f_{0}(x,u)-f_{0}(x,u_0)|^{\frac{2\tilde{\nu}}{\tilde{\nu}+2}}dx\right)^{\frac{\tilde{\nu}+2}{2\tilde{\nu}}}\|v\|_{H_{X,0}^{1}(\Omega)}\\
 &\leq C\left(\int_{\Omega}|f_{0}(x,u)-f_{0}(x,u_0)|^{\frac{2\tilde{\nu}}{\tilde{\nu}+2}}dx\right)^{\frac{\tilde{\nu}+2}{2\tilde{\nu}}}.
 \end{aligned}
 \end{equation}
It follows from \eqref{2-21} that
\begin{equation}\label{2-26}
  |f_{0}(x,u)|\leq a_{1}(x)+C(1+|u|^{s_{2}}) ~~\forall (x,u)\in \Omega\times \mathbb{R},
\end{equation}
where $s_{2}:=\max\left\{s_{1},\frac{\tilde{\nu}+2}{2\tilde{\nu}}\right\}$ and $C>0$ is a positive constant.
 Thus, Proposition \ref{prop2-7} gives that $u\mapsto f_{0}(\cdot,u(\cdot))$ is continuous from $L^{ \frac{2\tilde{\nu}s_{2}}{\tilde{\nu}+2}}(\Omega)$ to $L^{\frac{2\tilde{\nu}}{\tilde{\nu}+2}}(\Omega)$. Observing that $1\leq s_{2}\cdot \frac{2\tilde{\nu}}{\tilde{\nu}+2}<\frac{2\tilde{\nu}}{\tilde{\nu}-2}$, by Proposition \ref{prop2-3} we know the embedding $H_{X,0}^{1}(\Omega)\hookrightarrow L^{\frac{2\tilde{\nu}s_{2}}{\tilde{\nu}+2}}(\Omega)$ is continuous and  compact. Hence, it follows from \eqref{2-25} that $J'(u)\to J'(u_0)~\mbox{in}~H_X^{-1}(\Omega)$ if $u\to u_{0}$ in $H_{X,0}^{1}(\Omega)$. That means $J$ has a continuous G\^{a}teaux derivative on $H_{X,0}^{1}(\Omega)$. Additionally, we obtain from \cite[Proposition 1.3]{Willem1997} that $J\in C^{1}(H_{X,0}^{1}(\Omega),\mathbb{R})$ and the Fr\'{e}chet derivative $DJ(u)=J'(u)$ for all $u\in H_{X,0}^{1}(\Omega)$.

 We further show that $DJ: u\mapsto DJ(u)$ from $H_{X,0}^1(\Omega)$ to $H_{X}^{-1}(\Omega)$ maps the bounded sets in $H_{X,0}^{1}(\Omega)$ to the relatively compact sets in $H_{X}^{-1}(\Omega)$.  For any bounded sequence $\{u_k\}_{k=1}^{\infty}$ in $H_{X,0}^1(\Omega)$, there exists a subsequence $\{u_{k_j}\}_{j=1}^{\infty}\subset \{u_k\}_{k=1}^{\infty}$ such that $u_{k_j}\rightharpoonup u$ weakly in $H_{X,0}^1(\Omega)$ as $j\to +\infty$. Since the embedding $H_{X,0}^{1}(\Omega)\hookrightarrow L^{\frac{2\tilde{\nu}s_{2}}{\tilde{\nu}+2}}(\Omega)$ is compact, we get $u_{k_{j}}\to u$ in $L^{\frac{2\tilde{\nu}s_{2}}{\tilde{\nu}+2}}(\Omega)$ and $f(\cdot,u_{k_j})\to f_{0}(\cdot,u)$ in $L^{\frac{2\tilde{\nu}}{\tilde{\nu}+2}}(\Omega)$ as $j\to +\infty$.
The estimate \eqref{2-25} yields that $DJ(u_{k_{j}})\to DJ(u)$ in $H_{X}^{-1}(\Omega)$ as $j\to +\infty$. Thus   $DJ$ maps the bounded sets in ${H_{X,0}^1}(\Omega)$ to the relatively compact sets in $H_X^{-1}(\Omega)$.
\end{proof}

Let $V$ be a real Banach space and $V^{*}$ be the dual space of $V$. We recall some basic definitions in critical point theory.
 \begin{definition}
 \label{def2-2}
For $E\in C^{1}(V, \mathbb{R})$, we say a sequence $\{u_m\}_{m=1}^{\infty}$ in $V$ is a Palais-Smale (PS) sequence for functional $E$, if ${|E(u_m)|}\leq C$ uniformly in $m$, and ${\|DE(u_m)\|}_{V^*}\rightarrow0$ as $m\rightarrow\infty$, where $DE:V\to V^{*}$ denotes
the Fr\'{e}chet derivative of $E$.
 \end{definition}

  \begin{definition}
  \label{def2-3}
 A functional $E\in C^{1}(V, \mathbb{R})$ satisfies Palais-Smale condition (henceforth denoted by (PS) condition) if any Palais-Smale sequence has a subsequence which is convergent in $V$.
  \end{definition}

\section{Proof of Theorem \ref{thm1}}
\label{Section3}
In this section, we give the proof of Theorem \ref{thm1}. We first define the weak solution for problem \eqref{problem1-1}.

\begin{definition}
\label{def3-1}
For $f$ and $g$ under the assumptions (H.1)-(H.4), we say  $u\in H_{X,0}^{1}(\Omega)$ is a weak solution of \eqref{problem1-1} if
\begin{equation}
\label{3-1}
  \int_{\Omega}Xu\cdot Xvdx-\int_{\Omega}f(x,u)vdx-\int_{\Omega}g(x,u)vdx=0~~~\forall v\in H_{X,0}^{1}(\Omega).
\end{equation}
\end{definition}
Consider the following energy functional
\begin{equation}\label{3-2}
  E(u):=\frac{1}{2}\int_{\Omega}{{|{Xu}|}^2} dx-\int_{\Omega}F(x,u) dx-\int_{\Omega}G(x,u)dx,
\end{equation}
where $G(x,u)=\int_{0}^{u}g(x,v)dv$ is the primitive of $g(x,u)$.
It follows that
\begin{proposition}
\label{prop3-1}
If the functions $f$ and $g$ satisfy the assumptions (H.1)-(H.4), then the functional
  \[ E(u)=\frac{1}{2}\int_{\Omega}{{|{Xu}|}^2} dx-\int_{\Omega}F(x,u) dx-\int_{\Omega}G(x,u)dx \]
belongs to $C^{1}(H_{X,0}^{1}(\Omega), \mathbb{R})$.  Furthermore, the Fr\'{e}chet derivative of $E$ at $u$ is given by
\begin{equation}\label{3-3}
  \langle DE(u),v\rangle=\int_{\Omega}Xu\cdot Xvdx-\int_{\Omega}f(x,u)vdx-\int_{\Omega}g(x,u)vdx~~~~\forall v\in H_{X,0}^{1}(\Omega).
\end{equation}
 Therefore, the semilinear equation \eqref{problem1-1} is the Euler-Lagrange equation of the variational problem for the energy functional \eqref{3-2}, and the critical point of $E$ in $H_{X,0}^{1}(\Omega)$ is the weak solution to \eqref{problem1-1}.
\end{proposition}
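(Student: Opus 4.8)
The plan is to decompose the functional as $E = E_0 - E_1 - E_2$, where $E_0(u) = \frac{1}{2}\int_\Omega |Xu|^2\,dx$, $E_1(u) = \int_\Omega F(x,u)\,dx$ and $E_2(u) = \int_\Omega G(x,u)\,dx$, and to verify the $C^1$ property and compute the derivative for each summand. The term $E_0$ is the restriction to $H_{X,0}^1(\Omega)$ of a bounded symmetric bilinear form (bounded because $\|Xu\|_{L^2(\Omega)} \le \|u\|_{H_{X,0}^1(\Omega)}$, and coercive by the Friedrichs--Poincar\'e inequality of Proposition \ref{Poincare}), hence it is of class $C^\infty$ with $\langle DE_0(u),v\rangle = \int_\Omega Xu\cdot Xv\,dx$; this is the only place one needs nothing beyond elementary Hilbert-space calculus.

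For $E_1$ and $E_2$ the idea is to invoke Proposition \ref{prop2-8} with $f_0 = f$ and $f_0 = g$ respectively, so the task reduces to checking the growth hypothesis \eqref{2-21}. For $f$: assumption (H.2) gives $|f(x,u)| \le C(1+|u|^{p-1})$ with $2 < p < 2_{\tilde\nu}^* = \frac{2\tilde\nu}{\tilde\nu-2}$, so taking $s_1 = p-1$ we have $0 \le s_1 < \frac{\tilde\nu+2}{\tilde\nu-2}$, while the constant $C$ lies in $L^{\frac{2\tilde\nu}{\tilde\nu+2}}(\Omega)$ since $\Omega$ is bounded. Proposition \ref{prop2-8} then yields $E_1 \in C^1(H_{X,0}^1(\Omega),\mathbb{R})$ with $\langle DE_1(u),v\rangle = \int_\Omega f(x,u)v\,dx$ (and $DE_1$ compact on bounded sets, a fact that will be needed later for the (PS) analysis).

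The delicate summand is $E_2$. Assumption (H.4) gives $|g(x,u)| \le \alpha(x) + \beta|u|^\sigma$ with $0\le\sigma<\mu-1$ and $0\le\alpha\in L^{\frac{\mu}{\mu-1}}(\Omega)$, so one must verify $\sigma < \frac{\tilde\nu+2}{\tilde\nu-2}$ and $\alpha\in L^{\frac{2\tilde\nu}{\tilde\nu+2}}(\Omega)$. The key auxiliary observation is that (H.2)--(H.3) force $\mu\le p$: integrating the Ambrosetti--Rabinowitz inequality $0 < \mu F(x,u)\le f(x,u)u$ for $|u|\ge R_0$ gives $F(x,u)\ge c_0|u|^\mu$ with $c_0 = R_0^{-\mu}\min_{\overline\Omega}F(\cdot,\pm R_0) > 0$ (using continuity of $f$ and positivity of $F(\cdot,\pm R_0)$), whereas (H.2) gives $F(x,u)\le C(1+|u|^p)$; comparing the two as $|u|\to\infty$ forces $\mu\le p$. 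Hence $\sigma < \mu-1 \le p-1 < \frac{\tilde\nu+2}{\tilde\nu-2}$; moreover $\mu\le p < \frac{2\tilde\nu}{\tilde\nu-2}$ is equivalent to $\frac{\mu}{\mu-1}\ge\frac{2\tilde\nu}{\tilde\nu+2}$, so boundedness of $\Omega$ gives $L^{\frac{\mu}{\mu-1}}(\Omega)\hookrightarrow L^{\frac{2\tilde\nu}{\tilde\nu+2}}(\Omega)$ and thus $\alpha\in L^{\frac{2\tilde\nu}{\tilde\nu+2}}(\Omega)$. Proposition \ref{prop2-8} then gives $E_2\in C^1(H_{X,0}^1(\Omega),\mathbb{R})$ with $\langle DE_2(u),v\rangle = \int_\Omega g(x,u)v\,dx$.

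Summing the three contributions yields $E = E_0 - E_1 - E_2 \in C^1(H_{X,0}^1(\Omega),\mathbb{R})$ with the Fr\'echet derivative exactly as in \eqref{3-3}. Finally, comparing \eqref{3-3} with Definition \ref{def3-1}, $DE(u)=0$ in $H_X^{-1}(\Omega)$ holds if and only if \eqref{3-1} holds for all $v\in H_{X,0}^1(\Omega)$, i.e. $u$ is a weak solution of \eqref{problem1-1}; this also exhibits \eqref{problem1-1} as the Euler--Lagrange equation of $E$. I expect the only nontrivial point to be the exponent bookkeeping for the $g$-term, namely extracting $\mu\le p$ from (H.2)--(H.3) and deducing from it the two integrability conditions required by Proposition \ref{prop2-8}; the rest is a direct application of results already established in Section \ref{Section2}.
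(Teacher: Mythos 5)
Your proposal is correct and follows essentially the same route as the paper: the same splitting $E=I-J_{1}-J_{2}$, a direct Hilbert-space computation for the quadratic part, and an appeal to Proposition \ref{prop2-8} for the $F$- and $G$-terms, with the weak-solution identification read off from \eqref{3-3} and Definition \ref{def3-1}. The only difference is that you explicitly check the growth hypotheses of Proposition \ref{prop2-8} for $g$ (deriving $\mu\leq p$ from (H.2)--(H.3) and hence $\sigma<\frac{\tilde{\nu}+2}{\tilde{\nu}-2}$ and $\alpha\in L^{\frac{2\tilde{\nu}}{\tilde{\nu}+2}}(\Omega)$), a verification the paper leaves implicit, and this bookkeeping is carried out correctly.
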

\begin{proof}
First, we can rewrite $E(u)$ as
 \[ \begin{aligned}
 E(u)&=\frac{1}{2}\int_{\Omega}|Xu|^2dx-\int_{\Omega}F(x,u)dx-\int_{\Omega}G(x,u)dx\\
 &=I(u)-J_{1}(u)-J_{2}(u),
 \end{aligned}\]
where
\[ I(u):=\frac{1}{2}\int_{\Omega}|Xu|^2dx,\quad\quad J_{1}(u):=\int_{\Omega}F(x,u)dx\quad\mbox{and}\quad J_{2}(u):=\int_{\Omega}G(x,u)dx.\]
Clearly, the functional $I(u)$ is well-defined on $H_{X,0}^{1}(\Omega)$ and has the G\^{a}teaux derivative $I'(u)$ satisfying
\begin{equation}\label{3-4}
\langle I'(u),v\rangle=\int_{\Omega}Xu\cdot Xvdx \quad\forall v\in H_{X,0}^1(\Omega).
\end{equation}
Observing that for any $u,u_0 \in H_{X,0}^1(\Omega)$,
\begin{align*}
 \|I'(u)-I'(u_0)\|_{{H_X^{-1}}(\Omega)}&=\sup\limits_{
 {v\in H_{X,0}^1(\Omega)},{\|v\|_{{H_{X,0}^1}(\Omega)}\leq1}}|\langle I'(u)-I'(u_0),v\rangle|   \\
 &\leq\sup\limits_{{v\in H_{X,0}^1(\Omega)},
{\|v\|_{{H_{X,0}^1}(\Omega)}\leq1}}\int_{\Omega}|Xu-Xu_0||Xv|dx\\
&\leq\sup\limits_{{v\in H_{X,0}^1(\Omega)},
{\|v\|_{{H_{X,0}^1}(\Omega)}\leq1}}\left(\int_{\Omega}|X(u-u_0)|^2dx\right)^{\frac{1}{2}}\|v\|_{{H_{X,0}^1}(\Omega)}\\
&\leq\|u-u_0\|_{{H_{X,0}^1}(\Omega)}.
 \end{align*}
 Hence $I\in C^{1}(H_{X,0}^{1}(\Omega), \mathbb{R})$ and the Fr\'{e}chet derivative $DI(u)=I'(u)$ for all $u\in H_{X,0}^{1}(\Omega)$. Additionally, Proposition \ref{prop2-8} implies $J_{1},J_{2}\in C^{1}(H_{X,0}^{1}(\Omega), \mathbb{R})$. Consequently, we have $E\in C^{1}(H_{X,0}^{1}(\Omega), \mathbb{R})$ and the Fr\'{e}chet derivative of $E$ at $u$ is given by
\[
  \langle DE(u),v\rangle=\int_{\Omega}Xu\cdot Xvdx-\int_{\Omega}f(x,u)vdx-\int_{\Omega}g(x,u)vdx~~~~\forall v\in H_{X,0}^{1}(\Omega). \]

\end{proof}

Then, we invoke Rabinowitz's perturbation from symmetric method (see \cite{Rabinowitz1982,Rabinowitz1986}) to investigate
 the multiplicity of weak solutions for subelliptic semilinear Dirichlet problem \eqref{problem1-1}.  More precisely, we shall construct a new functional $E_{1}$ which is a modification of $E$ such that the critical values and critical points of $E_1$ coincide with the critical values and critical points of $E$ in high energy level. The conclusion of Theorem \ref{thm1} follows if we prove that $E_{1}$ admits an unbounded sequence of critical values. Before constructing the functional $E_{1}$, we give some necessary estimates.\par

The assumption (H.3) implies for all $|u|\geq R_0$ and all $x\in \overline{\Omega}$,
\[
u|u|^\mu\frac{\partial}{\partial u}(|u|^{-\mu}F(x,u))=f(x,u)u-\mu F(x,u)\geq 0,\]
which gives
\begin{equation}\label{3-5}
  F(x,u)\geq\gamma_0(x)|u|^\mu
\end{equation}
holds for all $|u|\geq R_{0}$ and all $x\in \overline{\Omega}$ with $\gamma_0(x)=R_0^{-\mu}\min{\{F(x,R_0),F(x,-R_0)\}}>0$. Since $F(x,u)\in C(\overline{\Omega}\times\mathbb{R})$, there exists a constant $a_{1}>0$ such that
\begin{equation}\label{3-6}
\gamma_0(x)=R_0^{-\mu}\min{\{F(x,R_0),F(x,-R_0)\}}\geq a_{1}>0~~\forall x\in \overline{\Omega}.
\end{equation}
From \eqref{3-5} and \eqref{3-6},  there is a constant $a_2>0$ such that
\begin{equation}\label{3-7}
F(x,u)\geq a_{1}|u|^\mu-a_2~~~\forall (x,u)\in \overline{\Omega}\times \mathbb{R}.
\end{equation}
Hence, there is a constant $a_3>0$, such that
\begin{equation}\label{3-8}
\frac{1}{\mu}\left(uf(x,u)+a_3\right)\geq F(x,u)+a_2\geq a_1|u|^\mu~~~\forall (x,u)\in \overline{\Omega}\times \mathbb{R}.
\end{equation}

Thus one has
\begin{proposition}
\label{prop3-2}
Under the hypotheses of Proposition \ref{Poincare} and assumptions (H.1)-(H.4), there is a positive constant $A_{0}>0$ such that if $u$ is a critical point if $E$, then
\begin{equation}
\label{3-9}
\int_{\Omega}(F(x,u)+a_2)dx\leq A_{0}(E(u)^2+1)^{\frac{1}{2}}.
\end{equation}
\end{proposition}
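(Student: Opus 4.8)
The plan is to exploit that a critical point $u$ of $E$ satisfies $\langle DE(u),u\rangle=0$, use this to eliminate the Dirichlet integral $\int_\Omega|Xu|^2\,dx$ from $E(u)$, and then bound the remaining integrals by means of the superquadraticity inequality \eqref{3-8} for $f$ together with the subcritical growth bound (H.4) for $g$. Writing $M:=\int_\Omega\bigl(F(x,u)+a_2\bigr)\,dx$, which is nonnegative by \eqref{3-7}, the target estimate \eqref{3-9} will follow once we show $M\le C\bigl(|E(u)|+1\bigr)$, since $|E(u)|+1\le C\bigl(E(u)^2+1\bigr)^{1/2}$.

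First I would rewrite $\langle DE(u),u\rangle=0$ via \eqref{3-3} as $\int_\Omega|Xu|^2\,dx=\int_\Omega f(x,u)u\,dx+\int_\Omega g(x,u)u\,dx$ and substitute this into \eqref{3-2}, obtaining
\[
E(u)=\int_\Omega\Bigl(\tfrac12 f(x,u)u-F(x,u)\Bigr)\,dx+\int_\Omega\Bigl(\tfrac12 g(x,u)u-G(x,u)\Bigr)\,dx.
\]
For the first integral, \eqref{3-8} gives $\tfrac1\mu\bigl(uf(x,u)+a_3\bigr)\ge F(x,u)+a_2$, hence pointwise $\tfrac12 f(x,u)u-F(x,u)\ge\bigl(\tfrac\mu2-1\bigr)\bigl(F(x,u)+a_2\bigr)-C$, so after integration it is bounded below by $\bigl(\tfrac\mu2-1\bigr)M-C$.

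The heart of the argument is to control the $g$-part and show it is sublinear in $M$. By (H.4) one has $\bigl|\tfrac12 g(x,u)u-G(x,u)\bigr|\le C\bigl(\alpha(x)|u|+|u|^{\sigma+1}\bigr)$, and Hölder's inequality with exponents $\tfrac{\mu}{\mu-1}$ and $\mu$ (together with $\sigma+1<\mu$) yields
\[
\Bigl|\int_\Omega\bigl(\tfrac12 g(x,u)u-G(x,u)\bigr)\,dx\Bigr|\le C\Bigl(\|\alpha\|_{L^{\mu/(\mu-1)}(\Omega)}\|u\|_{L^\mu(\Omega)}+\|u\|_{L^\mu(\Omega)}^{\sigma+1}\Bigr).
\]
Then \eqref{3-7} gives $a_1\int_\Omega|u|^\mu\,dx\le M$, i.e.\ $\|u\|_{L^\mu(\Omega)}\le (M/a_1)^{1/\mu}$, so the right-hand side is $\le C\bigl(M^{1/\mu}+M^{(\sigma+1)/\mu}\bigr)$. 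Since $0\le\sigma<\mu-1$ forces $\tfrac{\sigma+1}{\mu}<1$ and $\tfrac1\mu<1$, Young's inequality absorbs this term into $\tfrac12\bigl(\tfrac\mu2-1\bigr)M+C$. Combining the two pieces gives $E(u)\ge\tfrac12\bigl(\tfrac\mu2-1\bigr)M-C$, which (using $\mu>2$ and $M\ge0$) yields simultaneously $E(u)\ge-C$ and $M\le\tfrac{4}{\mu-2}\bigl(E(u)+C\bigr)$; together these imply $M\le C\bigl(|E(u)|+1\bigr)\le A_0\bigl(E(u)^2+1\bigr)^{1/2}$ for a suitable $A_0>0$.

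The main obstacle is precisely making the $g$-contribution genuinely sublinear in $M$, which is where all the structural hypotheses enter: the subcritical exponent $0\le\sigma<\mu-1$ in (H.4) guarantees $(\sigma+1)/\mu<1$, the integrability $\alpha\in L^{\mu/(\mu-1)}(\Omega)$ makes the Hölder pairing with $\|u\|_{L^\mu(\Omega)}$ admissible, and the coercivity-type bound \eqref{3-7} provides the only available a priori control on $u$ (note $E$ itself is not coercive, so no bound on $\|u\|_{H_{X,0}^{1}(\Omega)}$ is at hand). Once these are in place, Young's inequality closes the estimate and the remainder is bookkeeping with the constants $a_1,a_2,a_3$ and $|\Omega|$.
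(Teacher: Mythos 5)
Your proposal is correct and follows essentially the same route as the paper: it uses the critical point identity $E(u)=E(u)-\tfrac12\langle DE(u),u\rangle$, the superquadraticity bound \eqref{3-8} to extract a positive multiple of $\int_{\Omega}(F(x,u)+a_2)dx$, and then controls the $g$-terms via (H.4), H\"older with $\alpha\in L^{\mu/(\mu-1)}(\Omega)$, the bound $a_1\int_\Omega|u|^\mu dx\le\int_\Omega(F(x,u)+a_2)dx$ from \eqref{3-7}, and Young's inequality to absorb them, exactly as in the chain of estimates \eqref{3-10}--\eqref{3-12}. The only difference is cosmetic bookkeeping (you apply Young's inequality to $M^{1/\mu}$ and $M^{(\sigma+1)/\mu}$ rather than pointwise to $|u|^{\sigma+1}$), so no further comment is needed.
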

\begin{proof}
Since $\alpha(x)\in L^\frac{\mu}{\mu-1}(\Omega)$ and $\mu>\sigma+1$, we have for any $u\in H_{X,0}^{1}(\Omega)$,
\begin{equation}\label{3-10}
  \int_{\Omega}|\alpha(x)u|dx\leq \|u\|_{L^{\mu}(\Omega)}\cdot \|\alpha\|_{L^{\frac{\mu}{\mu-1}}(\Omega)}.
\end{equation}
The Young's inequality gives
\begin{equation}\label{3-11}
  |u|^{\sigma+1}\leq \varepsilon|u|^{\mu}+\varepsilon^{-\frac{\sigma+1}{\mu-\sigma-1}}
\end{equation}
holds for any $\varepsilon>0$.

Suppose $u$ is a critical point of $E$. Using \eqref{3-3} and \eqref{3-8}-\eqref{3-11} we have
\begin{equation}
\label{3-12}
\begin{aligned}
E(u)&=E(u)-\frac{1}{2}\langle DE(u), u\rangle\\
&=\int_{\Omega}\left(\frac{1}{2}uf(x,u)-F(x,u)\right)dx+\int_{\Omega}\left(\frac{1}{2}g(x,u)u-G(x,u)\right)dx\\
&=\frac{1}{2}\int_{\Omega}\left(uf(x,u)+a_{3}\right)dx-\int_{\Omega}(F(x,u)+a_{2})dx+\int_{\Omega}\left(\frac{1}{2}g(x,u)u-G(x,u)\right)dx-a_4\\
&\geq \left(\frac{1}{2}-\frac{1}{\mu}\right)\int_{\Omega}(uf(x,u)+a_{3})dx-\frac{1}{2}\int_{\Omega}|g(x,u)u|dx-\int_{\Omega}|G(x,u)|dx-a_4\\
&\geq a_5 \int_{\Omega}(F(x,u)+a_2)dx-a_{6}\left(\int_{\Omega}|\alpha(x)u|dx+\int_{\Omega}|u|^{\sigma+1}dx\right)-a_{4}\\
&\geq \frac{a_5}{2}\int_{\Omega}(F(x,u)+a_2)dx-a_7,
\end{aligned}
\end{equation}
and \eqref{3-9} follows immediately from \eqref{3-12}.
\end{proof}

Assume $\chi \in C^\infty(\mathbb{R}, \mathbb{R})$ is a smooth function such that $\chi(\xi)\equiv 1$ for $\xi\leq 1$, $\chi(\xi)\equiv 0$ for $\xi\geq 2$, and $\chi'(\xi)\in (-2,0)$ for $\xi \in (1,2)$. Let
\[Q(u):=2A_{0}(E(u)^2+1)^{\frac{1}{2}}\]
and
\[\psi(u):=\chi\left(Q(u)^{-1}\int_{\Omega}(F(x,u)+a_2)dx \right).\]
 The estimate \eqref{3-9} indicates that, if $u$ is a critical point of $E$,  $Q(u)^{-1}\int_{\Omega}(F(x,u)+a_2)dx$ lies in $[0,\frac{1}{2}]$ and then $\psi(u)=1$. Now, we set
\begin{equation}
\label{3-13}
E_1(u):=\frac{1}{2}\int_{\Omega}|Xu|^{2}dx-\int_{\Omega}F(x,u)dx-\psi(u)\int_{\Omega}G(x,u)dx~~~~\forall u\in H_{X,0}^{1}(\Omega).
\end{equation}
It follows from Proposition \ref{prop3-2} that $E_1(u)=E(u)$ if $u$ is a critical point of $E$.\par

We next present several useful lemmas which contain the main technical properties of $E_{1}$.
\begin{lemma}
Under the hypotheses of Proposition \ref{Poincare} and assumptions (H.1)-(H.4), $E_1 \in C^1(H_{X,0}^1(\Omega), \mathbb{R})$.
\end{lemma}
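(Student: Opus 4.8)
The plan is to show that each of the three summands in
\[
E_1(u)=\tfrac12\int_\Omega|Xu|^2\,dx-\int_\Omega F(x,u)\,dx-\psi(u)\int_\Omega G(x,u)\,dx
\]
defines a $C^1$-functional on $H_{X,0}^1(\Omega)$. The first term $I(u)=\tfrac12\int_\Omega|Xu|^2\,dx$ was already shown to be $C^1$ in the proof of Proposition \ref{prop3-1}. For the second term $J_1(u)=\int_\Omega F(x,u)\,dx$, I would invoke Proposition \ref{prop2-8}: the assumption (H.2) gives $|f(x,u)|\le C(1+|u|^{p-1})$ with $2<p<2_{\tilde\nu}^*=\frac{2\tilde\nu}{\tilde\nu-2}$, hence the growth exponent $s_1=p-1$ satisfies $0\le s_1<\frac{\tilde\nu+2}{\tilde\nu-2}$ (since $p-1<\frac{2\tilde\nu}{\tilde\nu-2}-1=\frac{\tilde\nu+2}{\tilde\nu-2}$), and $a_1\equiv C$ is trivially in $L^{\frac{2\tilde\nu}{\tilde\nu+2}}(\Omega)$ because $\Omega$ is bounded. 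So $J_1\in C^1(H_{X,0}^1(\Omega),\mathbb{R})$ with $\langle DJ_1(u),v\rangle=\int_\Omega f(x,u)v\,dx$.

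The real work is the perturbation term $\Psi(u):=\psi(u)\int_\Omega G(x,u)\,dx$. I would write this as a product $\Psi(u)=\psi(u)\cdot J_2(u)$ where $J_2(u)=\int_\Omega G(x,u)\,dx$ and $\psi(u)=\chi\big(Q(u)^{-1}J_1^+(u)\big)$ with $J_1^+(u):=\int_\Omega(F(x,u)+a_2)\,dx$ and $Q(u)=2A_0(E(u)^2+1)^{1/2}$. First, $J_2\in C^1(H_{X,0}^1(\Omega),\mathbb{R})$: by (H.4), $|g(x,u)|\le\alpha(x)+\beta|u|^\sigma$ with $0\le\sigma<\mu-1$; since $\mu>2$ we have $\sigma<\mu-1$, and one checks that $\frac{2\tilde\nu}{\tilde\nu+2}\cdot\sigma$ and the integrability of $\alpha\in L^{\mu/(\mu-1)}(\Omega)\subset L^{2\tilde\nu/(\tilde\nu+2)}(\Omega)$ (again using $\Omega$ bounded and $\frac{\mu}{\mu-1}\ge\frac{2\tilde\nu}{\tilde\nu+2}$, which holds because $\sigma<\frac{\tilde\nu+2}{\tilde\nu-2}$ would need to be verified — in fact one only needs $\sigma$ subcritical, i.e. $\sigma+1<\frac{2\tilde\nu}{\tilde\nu-2}$, which follows from $\sigma<\mu-1$ together with (H.2)–(H.3) forcing $\mu\le p<2_{\tilde\nu}^*$) put $g$ in the scope of Proposition \ref{prop2-8} with $s_1=\sigma$. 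Hence $J_2$ is $C^1$ with $\langle DJ_2(u),v\rangle=\int_\Omega g(x,u)v\,dx$. Next, $\psi$ is $C^1$: $\chi\in C^\infty(\mathbb{R})$; $J_1^+\in C^1$ (same as $J_1$); $E\in C^1$ by Proposition \ref{prop3-1}, so $E(u)^2+1$ is $C^1$ and bounded below by $1$, hence $(E(u)^2+1)^{1/2}$ is $C^1$ and $Q(u)\ge 2A_0>0$, so $Q^{-1}$ is $C^1$; therefore the composition and product $Q(u)^{-1}J_1^+(u)$ is $C^1$, and finally $\psi=\chi\circ(Q^{-1}J_1^+)$ is $C^1$ by the chain rule. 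Then $\Psi=\psi\cdot J_2$ is $C^1$ as a product of $C^1$ functionals.

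Assembling, $E_1=I-J_1-\Psi$ is a finite combination of $C^1$-functionals on $H_{X,0}^1(\Omega)$, hence $E_1\in C^1(H_{X,0}^1(\Omega),\mathbb{R})$. I expect the main obstacle to be bookkeeping rather than conceptual: one must verify carefully that every growth exponent appearing (namely $p-1$ from $f$ and $\sigma$ from $g$) is strictly subcritical so that Proposition \ref{prop2-8} applies, and that $\alpha\in L^{\mu/(\mu-1)}(\Omega)$ embeds into the required Lebesgue space on the bounded domain $\Omega$; the only genuinely delicate point is confirming that the cutoff factor $\psi$ does not destroy $C^1$-regularity, which is handled by noting that $\chi$ is smooth with bounded derivative and that $Q(u)$ stays uniformly bounded away from zero, so no division-by-zero or loss-of-smoothness issue arises. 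For later use (in the subsequent lemmas) I would also record the explicit formula
\[
\langle DE_1(u),v\rangle=\int_\Omega Xu\cdot Xv\,dx-\int_\Omega f(x,u)v\,dx-\psi(u)\int_\Omega g(x,u)v\,dx-\langle D\psi(u),v\rangle\int_\Omega G(x,u)\,dx,
\]
obtained by the product and chain rules, though strictly speaking the statement only asserts $E_1\in C^1$.
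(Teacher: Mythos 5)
Your proof is correct and takes essentially the same route as the paper: the paper simply rewrites $E_{1}(u)=E(u)+(1-\psi(u))\int_{\Omega}G(x,u)\,dx$ and combines Proposition \ref{prop3-1} (which already gives the $C^1$-regularity of the Nemytskii-type functionals via Proposition \ref{prop2-8}) with the smoothness of $\chi$, while you spell out the same ingredients in more detail --- the subcriticality checks $p-1,\ \sigma<\frac{\tilde{\nu}+2}{\tilde{\nu}-2}$, the embedding $L^{\frac{\mu}{\mu-1}}(\Omega)\subset L^{\frac{2\tilde{\nu}}{\tilde{\nu}+2}}(\Omega)$ on the bounded domain (using $\mu\leq p<2_{\tilde{\nu}}^{*}$), and the lower bound $Q(u)\geq 2A_{0}>0$ ensuring $\psi=\chi\circ\bigl(Q^{-1}\int_{\Omega}(F+a_{2})\,dx\bigr)$ is $C^1$. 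Your closing formula for $\langle DE_{1}(u),v\rangle$ agrees with \eqref{3-21} in the paper, so the proposal is a correct, more detailed version of the paper's argument.
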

\begin{proof}
From \eqref{3-2} and \eqref{3-13}, we have
\[
 E_{1}(u)=E(u)+(1-\psi(u))\int_{\Omega}G(x,u)dx~~~~\forall u\in H_{X,0}^{1}(\Omega).
\]
Since $\chi$ is smooth, then $\psi\in C^1(H_{X,0}^1(\Omega), \mathbb{R})$ and therefore $E_1\in C^1(H_{X,0}^1(\Omega), \mathbb{R})$.
\end{proof}

\begin{lemma}
\label{lemma3-2}
Under the hypotheses in Proposition \ref{Poincare}  and assumptions (H.1)-(H.4), there exists a positive constant $A$ depending on $g$ such that
  \begin{equation}
  \label{3-14}
  |E_1(u)-E_1(-u)|\leq A(|E_1(u)|^{\frac{\sigma+1}{\mu}}+1)~~~~\forall u\in H_{X,0}^1(\Omega).
  \end{equation}
\end{lemma}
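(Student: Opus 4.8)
The plan is to use the oddness hypothesis (H.1) to kill the ``symmetric part'' of $E_1$, and then to absorb the remaining non-symmetric perturbation $\psi(\cdot)\int_\Omega G(x,\cdot)\,dx$ into a sublinear power of $|E_1|$, which is exactly what produces the exponent $\frac{\sigma+1}{\mu}$. First I would note that (H.1) forces $F(x,\cdot)$ to be even, while $\frac12\int_\Omega|Xu|^2\,dx$ is obviously invariant under $u\mapsto-u$; hence from \eqref{3-13},
\[
E_1(u)-E_1(-u)=\psi(-u)\int_\Omega G(x,-u)\,dx-\psi(u)\int_\Omega G(x,u)\,dx,
\]
so that $|E_1(u)-E_1(-u)|\le \psi(u)\int_\Omega|G(x,u)|\,dx+\psi(-u)\int_\Omega|G(x,-u)|\,dx$, and it suffices to estimate each term $\psi(w)\int_\Omega|G(x,w)|\,dx$ for $w\in H_{X,0}^1(\Omega)$.

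Next, from (H.4) one has $|G(x,w)|\le\alpha(x)|w|+\tfrac{\beta}{\sigma+1}|w|^{\sigma+1}$; integrating and applying H\"older's inequality, using $\alpha\in L^{\mu/(\mu-1)}(\Omega)$, the boundedness of $\Omega$, and $\sigma+1<\mu$, gives a bound of the form $\int_\Omega|G(x,w)|\,dx\le C\bigl(1+\|w\|_{L^\mu(\Omega)}^{\sigma+1}\bigr)$. The core of the argument is then the claim: there is a constant $C$ (depending on $g$ and the data) such that $\psi(w)\ne0$ implies $\|w\|_{L^\mu(\Omega)}^{\mu}\le C\bigl(|E_1(w)|+1\bigr)$. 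To see this, observe that $\psi(w)\ne0$ means $\chi$ is evaluated at a point $<2$, so $\int_\Omega(F(x,w)+a_2)\,dx<2Q(w)=4A_0\sqrt{E(w)^2+1}\le 4A_0(|E(w)|+1)$; combined with $F(x,w)+a_2\ge a_1|w|^\mu$ from \eqref{3-7}, this yields $a_1\|w\|_{L^\mu(\Omega)}^{\mu}\le 4A_0(|E(w)|+1)$. Since $E(w)=E_1(w)-(1-\psi(w))\int_\Omega G(x,w)\,dx$, the bound on $\int_\Omega|G|$ just obtained gives $|E(w)|\le|E_1(w)|+C(1+\|w\|_{L^\mu(\Omega)}^{\sigma+1})$; substituting and using Young's inequality (legitimate because $\sigma+1<\mu$) to absorb $C\|w\|_{L^\mu(\Omega)}^{\sigma+1}$ into $\tfrac{a_1}{2}\|w\|_{L^\mu(\Omega)}^{\mu}$ proves the claim. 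When $\psi(w)=0$ the corresponding term vanishes, so in every case $\psi(w)\int_\Omega|G(x,w)|\,dx\le A_1\bigl(|E_1(w)|^{(\sigma+1)/\mu}+1\bigr)$, where I use the subadditivity $(a+b)^\theta\le a^\theta+b^\theta$ valid for $\theta=\frac{\sigma+1}{\mu}\in(0,1)$.

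Applying this to $w=u$ and $w=-u$ gives $|E_1(u)-E_1(-u)|\le A_1(|E_1(u)|^{(\sigma+1)/\mu}+1)+A_1(|E_1(-u)|^{(\sigma+1)/\mu}+1)$. The final step is a short bootstrap to remove $|E_1(-u)|$: setting $D:=|E_1(u)-E_1(-u)|$ (finite since $E_1$ is real-valued), we have $|E_1(-u)|\le|E_1(u)|+D$, hence $|E_1(-u)|^{(\sigma+1)/\mu}\le|E_1(u)|^{(\sigma+1)/\mu}+D^{(\sigma+1)/\mu}$, so that $D\le 2A_1(|E_1(u)|^{(\sigma+1)/\mu}+1)+A_1D^{(\sigma+1)/\mu}$; since $\frac{\sigma+1}{\mu}<1$, Young's inequality gives $A_1D^{(\sigma+1)/\mu}\le\tfrac12 D+C$, and rearranging yields \eqref{3-14} with a suitable $A=A(g)$. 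The main obstacle is precisely the asymmetry coming from $g$ being neither odd nor even: one cannot relate $\psi(-u)\int_\Omega G(x,-u)\,dx$ directly to $|E_1(u)|$, so the bootstrap — together with the careful bookkeeping linking $E$ and $E_1$ through the cut-off $\psi$ and the truncation level built into $Q$ — is what is needed to close the estimate.
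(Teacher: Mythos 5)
Your proposal is correct, and its core is the same chain of estimates as the paper's: on $\{\psi\neq 0\}$ the cut-off forces $\int_\Omega(F(x,w)+a_2)\,dx\le 4A_0(|E(w)|+1)$, which together with $F(x,w)+a_2\ge a_1|w|^{\mu}$ from \eqref{3-7}, the (H.4) bound $\int_\Omega|G(x,w)|\,dx\le C(1+\|w\|_{L^{\mu}(\Omega)}^{\sigma+1})$, the identity $E=E_1-(1-\psi)\int_\Omega G\,dx$ and Young's inequality produces exactly the paper's key estimate \eqref{3-19}, $\bigl|\int_\Omega G(x,w)\,dx\bigr|\le C(1+|E_1(w)|^{\frac{\sigma+1}{\mu}})$ (their \eqref{3-15}--\eqref{3-19}). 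Where you genuinely diverge is the final assembly. The paper disposes of the asymmetry by asserting in \eqref{3-20} that ``$\psi(u)$ is an even functional'', so that only $\psi(u)$ and hence only $|E_1(u)|$ appears; note, however, that $\psi$ is built from $Q(u)=2A_0(E(u)^2+1)^{1/2}$ and $E$ contains the non-symmetric term $\int_\Omega G(x,u)\,dx$, so evenness of $\psi$ is not literally true as stated and that step (as well as the opening case $u\notin\operatorname{supp}\psi$, which tacitly assumes $\psi(-u)=0$ too) really needs an extra argument. You avoid this entirely: you bound $\psi(u)\int_\Omega|G(x,u)|\,dx$ and $\psi(-u)\int_\Omega|G(x,-u)|\,dx$ separately, accept that the second bound involves $|E_1(-u)|$, and then eliminate it by the bootstrap $|E_1(-u)|\le|E_1(u)|+D$ with $D=|E_1(u)-E_1(-u)|$, absorbing $A_1D^{\frac{\sigma+1}{\mu}}$ by Young since $\frac{\sigma+1}{\mu}<1$. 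This costs only a slightly longer ending but buys robustness: it proves \eqref{3-14} without any symmetry assumption on $\psi$, which is precisely the point where the paper's write-up is loose; the constants and the exponent $\frac{\sigma+1}{\mu}$ come out the same either way.
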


\begin{proof}
If $u\notin {\rm supp}~\psi$, then $\psi(u)=0$ and \eqref{3-14} holds due to assumption (H.1).
Hence, we only need to prove \eqref{3-14} for $u\in {\rm supp}~\psi$.
First,
 for any $u\in {\rm supp}~\psi$, we have
\begin{equation}\label{3-15}
\int_{\Omega}F(x,u)dx\leq 4A_{0}(E(u)^2+1)^{\frac{1}{2}}\leq 4A_{0}(|E(u)|+1).
\end{equation}
  Then by assumption (H.4),\eqref{3-8}, \eqref{3-10} and \eqref{3-15}  we have
   \begin{equation}\label{3-16}
    \begin{aligned}
   \left|\int_{\Omega}G(x,u)dx\right|&\leq \int_{\Omega}\left|\int_{0}^{u(x)}g(x,v)dv\right|dx\\
    &\leq C\|u\|_{L^{\mu}(\Omega)}+\int_{\Omega} C|u|^{\sigma+1} dx\\
   &\leq C+C\left(\int_{\Omega}|u|^\mu dx\right)^{\frac{\sigma+1}{\mu}} dx\\
    &\leq C+C\left(\int_{\Omega}(F(x,u)+a_{2}) dx\right)^{\frac{\sigma+1}{\mu}}\\
    &\leq C(1+|E(u)|^{\frac{\sigma+1}{\mu}}).
    \end{aligned}
  \end{equation}
Besides, since $E(u)=E_{1}(u)+(\psi(u)-1)\int_{\Omega}G(x,u)dx$ and $0\leq \psi\leq 1$, we get
\begin{equation}\label{3-17}
\begin{aligned}
 |E(u)|^{\frac{\sigma+1}{\mu}}&\leq \left(|E_{1}(u)|+\left|\int_{\Omega}G(x,u)dx\right|\right)^{\frac{\sigma+1}{\mu}}\\
 &\leq  |E_{1}(u)|^{\frac{\sigma+1}{\mu}}+\left|\int_{\Omega}G(x,u)dx\right|^{\frac{\sigma+1}{\mu}}.
\end{aligned}
\end{equation}
Recalling that $\frac{\sigma+1}{\mu}<1$,  it follows from \eqref{3-16},  \eqref{3-17} and Young's inequality that
\begin{equation}\label{3-18}
|E(u)|^{\frac{\sigma+1}{\mu}}\leq C\left( |E_{1}(u)|^{\frac{\sigma+1}{\mu}}+1\right).
\end{equation}
Combining \eqref{3-16} and \eqref{3-18},
\begin{equation}\label{3-19}
   \left|\int_{\Omega}G(x,u)dx\right|\leq C(1+|E_{1}(u)|^{\frac{\sigma+1}{\mu}}).
\end{equation}
Observing that  $\psi(u)$ is an even functional, we can deduce from \eqref{3-19} that
  \begin{equation}\label{3-20}
    \begin{aligned}
    |E_1(u)-E_1(-u)|&\leq \psi(u)\left|\int_{\Omega}G(x,u)dx-\int_{\Omega}G(x,-u)dx\right|\\
    &\leq A(1+|E_{1}(u)|^{\frac{\sigma+1}{\mu}}),
    \end{aligned}
  \end{equation}
where $A>0$ is a positive constant depending on $g$.
\end{proof}

Let us analyze the Fr\'{e}chet derivative $DE_{1}$ of $E_{1}$.
From  \eqref{3-13},  for any $u,v\in H_{X,0}^1(\Omega)$,
\begin{equation}\label{3-21}
\langle DE_1(u), v\rangle=\int_{\Omega} Xu\cdot Xvdx-\int_{\Omega}f(x,u)vdx-\langle D\psi(u), v\rangle \int_{\Omega} G(x,u)dx-\psi(u)\int_{\Omega}g(x,u)v dx,
\end{equation}
where
\begin{equation}\label{3-22}
\langle D\psi (u), v\rangle=\chi'(\theta(u))Q(u)^{-2}\left(Q(u)\int_{\Omega}f(x,u)v dx-(2A_{0})^2\theta(u)E(u)\langle DE(u), v\rangle\right)
\end{equation}
and
\begin{equation}\label{3-23}
\theta(u):=Q(u)^{-1}\int_{\Omega}(F(x,u)+a_2) dx.
\end{equation}
Let
\begin{equation}\label{3-24}
T_1(u):=\chi'(\theta(u))(2A_{0})^2Q(u)^{-2}E(u)\theta(u)\int_{\Omega}G(x,u) dx,
\end{equation}
and
\begin{equation}\label{3-25}
T_2(u):=\chi'(\theta(u))Q(u)^{-1}\int_{\Omega}G(x,u) dx+T_1(u).
\end{equation}
Hence,  \eqref{3-21}-\eqref{3-25} give that
\begin{equation}\label{3-26}
\begin{aligned}
\langle DE_1(u), v\rangle=&(1+T_1(u))\int_{\Omega}Xu\cdot Xv dx\\
&-(1+T_2(u))\int_{\Omega}f(x,u)v dx-(\psi(u)+T_1(u))\int_{\Omega}g(x,u)v dx.
\end{aligned}
\end{equation}

Then, we have
\begin{lemma}
\label{lemma3-3}
Under the hypotheses in Proposition \ref{Poincare} and assumptions (H.1)-(H.4),  $T_{1}$ and $T_{2}$ satisfy
\begin{equation}\label{3-27}
\begin{split}
|T_1(u)|&\leq C(|E_{1}(u)|^{\frac{\sigma+1}{\mu}}+1)|E_{1}(u)|^{-1},\\
|T_2(u)|&\leq C(|E_{1}(u)|^{\frac{\sigma+1}{\mu}}+1)|E_{1}(u)|^{-1},
\end{split}
\end{equation}
where $C>0$ is a positive constant.
\end{lemma}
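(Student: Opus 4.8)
The plan is to reduce at once to the case $\chi'(\theta(u))\neq 0$ and then substitute the definitions \eqref{3-22}--\eqref{3-25}. Indeed, if $\chi'(\theta(u))=0$, then \eqref{3-24} gives $T_1(u)=0$, hence \eqref{3-25} forces $T_2(u)=0$, and \eqref{3-27} holds trivially. So I would assume $\chi'(\theta(u))\neq 0$; by the construction of $\chi$ this forces $\theta(u)\in(1,2)$, which supplies three facts simultaneously: $|\chi'(\theta(u))|<2$, $\theta(u)<2$, and $Q(u)<\int_{\Omega}(F(x,u)+a_2)\,dx$. In particular $u\in{\rm supp}\,\psi$, so \eqref{3-15}, \eqref{3-16}, and above all the bound $\left|\int_{\Omega}G(x,u)\,dx\right|\leq C\bigl(1+|E_1(u)|^{\frac{\sigma+1}{\mu}}\bigr)$ from \eqref{3-19} are available.

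The next step is a direct insertion of $Q(u)=2A_0(E(u)^2+1)^{1/2}$ into \eqref{3-24} and \eqref{3-25}. Using $(2A_0)^2Q(u)^{-2}|E(u)|=\frac{|E(u)|}{E(u)^2+1}\leq (E(u)^2+1)^{-1/2}$, $Q(u)^{-1}=\frac{1}{2A_0}(E(u)^2+1)^{-1/2}$, the bounds $|\chi'(\theta(u))|<2$, $\theta(u)<2$, and \eqref{3-19}, one gets immediately
\[|T_1(u)|\leq C\,(E(u)^2+1)^{-\frac12}\bigl(1+|E_1(u)|^{\frac{\sigma+1}{\mu}}\bigr),\qquad |T_2(u)|\leq C\,(E(u)^2+1)^{-\frac12}\bigl(1+|E_1(u)|^{\frac{\sigma+1}{\mu}}\bigr).\]
Thus the whole problem is reduced to replacing the decay factor $(E(u)^2+1)^{-1/2}$ by $|E_1(u)|^{-1}$.

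That replacement is the point that requires care. For it I would use the identity $E_1(u)=E(u)+(1-\psi(u))\int_{\Omega}G(x,u)\,dx$ with $0\leq\psi\leq 1$ together with \eqref{3-19}, obtaining $|E(u)|\geq |E_1(u)|-C\bigl(1+|E_1(u)|^{\frac{\sigma+1}{\mu}}\bigr)$; since $\frac{\sigma+1}{\mu}<1$ by (H.4), there is an $M>0$ such that $|E(u)|\geq \tfrac12|E_1(u)|$ whenever $|E_1(u)|\geq M$, so on that range $(E(u)^2+1)^{-1/2}\leq |E(u)|^{-1}\leq 2|E_1(u)|^{-1}$ and \eqref{3-27} follows. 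On the complementary range $|E_1(u)|<M$ one uses $(E(u)^2+1)^{-1/2}\leq 1$ to see that $T_1(u)$ and $T_2(u)$ are bounded by $C(1+M^{(\sigma+1)/\mu})$, whereas the right-hand side of \eqref{3-27} is at least $|E_1(u)|^{-1}>M^{-1}$, so enlarging the constant absorbs this range too. The only genuinely delicate ingredient is therefore this comparison of $E(u)$ with $E_1(u)$, which hinges on the sublinear growth restriction $\sigma<\mu-1$ in (H.4) — exactly as in the passage from \eqref{3-16} to \eqref{3-19}; everything else is bookkeeping with \eqref{3-22}--\eqref{3-25}.
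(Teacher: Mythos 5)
Your argument is correct and follows essentially the same route as the paper: the paper's proof dismisses the case $\chi'(\theta(u))=0$ and, for $1\leq\theta(u)\leq 2$, simply cites \eqref{3-19}, \eqref{3-24} and \eqref{3-25}. What you add is only the explicit justification of the step the paper leaves implicit — the comparison $|E(u)|\geq\tfrac{1}{2}|E_{1}(u)|$ for $|E_{1}(u)|$ large (with the bounded range absorbed into the constant), which is exactly what is needed to trade the factor $(E(u)^{2}+1)^{-1/2}$ for $|E_{1}(u)|^{-1}$ in \eqref{3-27}.
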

\begin{proof}
If $u\in\rm{supp}~\psi$ with  $\theta(u)<1$, then $\chi'(\theta(u))=0$, which gives $T_{1}(u)=T_{2}(u)=0$ and \eqref{3-27}. For $u\in \rm{supp}~\psi$ with $1\leq \theta(u)\leq 2$, \eqref{3-27} is derived from
 \eqref{3-19}, \eqref{3-24} and \eqref{3-25}.
\end{proof}

\begin{lemma}
\label{lemma3-4}
Under the hypotheses in Proposition \ref{Poincare} and assumptions (H.1)-(H.4),  there is a constant $M_0>0$ such that if $E_1(u)\geq M_0$ and $DE_1(u)=0$, then $E_1(u)=E(u)$ and $DE(u)=0$.
\end{lemma}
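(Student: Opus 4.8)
The plan is to show that once $E_1(u)$ is large enough, the modifying cutoff $\psi(u)$ must equal $1$, whence $E_1$ coincides with $E$ near $u$ and the criticality transfers. First I would use the estimate in Lemma \ref{lemma3-3}: since $DE_1(u)=0$, the identity \eqref{3-26} can be tested against $v=u$, yielding
\begin{equation*}
(1+T_1(u))\int_{\Omega}|Xu|^2dx=(1+T_2(u))\int_{\Omega}f(x,u)u\,dx+(\psi(u)+T_1(u))\int_{\Omega}g(x,u)u\,dx.
\end{equation*}
For $E_1(u)\geq M_0$ with $M_0$ large, the bounds $|T_1(u)|,|T_2(u)|\leq C(|E_1(u)|^{\frac{\sigma+1}{\mu}}+1)|E_1(u)|^{-1}$ force $T_1(u)$ and $T_2(u)$ to be small (recall $\frac{\sigma+1}{\mu}<1$, so the right-hand side tends to $0$ as $E_1(u)\to+\infty$); in particular $1+T_1(u)\geq\tfrac12$ and $|T_1(u)|,|T_2(u)|\leq\tfrac12$, say.

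The key step is to argue that $\theta(u)<1$, i.e. $Q(u)^{-1}\int_{\Omega}(F(x,u)+a_2)\,dx<1$, so that by the choice of $\chi$ (namely $\chi\equiv1$ on $(-\infty,1]$) we get $\psi(u)=1$, $T_1(u)=T_2(u)=0$, and $DE_1(u)=DE(u)$. The way I would do this is to mimic the computation in Proposition \ref{prop3-2} but now with $E_1$ and the perturbative terms $T_1,T_2$ present. Compute $E_1(u)-\tfrac12\langle DE_1(u),u\rangle=E_1(u)$ (since $DE_1(u)=0$), expand using \eqref{3-13} and \eqref{3-26}, and collect terms: the leading contribution is $(\tfrac12-\tfrac1\mu)\int_{\Omega}(uf(x,u)+a_3)\,dx\geq a_5\int_{\Omega}(F(x,u)+a_2)\,dx$ by \eqref{3-8}, while the remaining terms — those involving $T_1(u),T_2(u)$, the $g$-terms controlled via (H.4), and the factor $(1-\psi(u))\int G$ — are lower order. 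Using \eqref{3-19} to bound $|\int_{\Omega}G(x,u)\,dx|\leq C(1+|E_1(u)|^{\frac{\sigma+1}{\mu}})$, the smallness of $T_1,T_2$, and Young's inequality exactly as in \eqref{3-12}, one obtains
\begin{equation*}
E_1(u)\geq c_0\int_{\Omega}(F(x,u)+a_2)\,dx-C\bigl(1+|E_1(u)|^{\frac{\sigma+1}{\mu}}\bigr)
\end{equation*}
for some $c_0>0$. Since $\frac{\sigma+1}{\mu}<1$, absorbing the last term for $E_1(u)$ large gives $\int_{\Omega}(F(x,u)+a_2)\,dx\leq C_1 E_1(u)$ for $E_1(u)\geq M_0$. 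On the other hand $Q(u)=2A_0(E(u)^2+1)^{1/2}$, and $E(u)=E_1(u)+(\psi(u)-1)\int_{\Omega}G(x,u)\,dx$ differs from $E_1(u)$ by a term of order $|E_1(u)|^{\frac{\sigma+1}{\mu}}$, so $Q(u)\geq 2A_0|E(u)|\geq A_0 E_1(u)$ for $M_0$ large. Comparing the two bounds, $\theta(u)=Q(u)^{-1}\int_{\Omega}(F(x,u)+a_2)\,dx\leq C_1 E_1(u)/(A_0 E_1(u))$, which is a fixed constant — not obviously less than $1$. To make it strictly less than $1$ one needs the sharper constant: note the genuine leading coefficient is $\bigl(\tfrac12-\tfrac1\mu\bigr)\cdot\tfrac1{?}$ and one must track that $A_0$ in $Q$ was chosen (in Proposition \ref{prop3-2}) precisely so that $\int_{\Omega}(F(x,u)+a_2)\,dx\leq A_0(E(u)^2+1)^{1/2}=\tfrac12 Q(u)$ at honest critical points; the perturbed computation loses only an arbitrarily small amount when $E_1(u)$ is large, so $\theta(u)\leq\tfrac12+o(1)<1$ for $M_0$ sufficiently large.

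I expect the main obstacle to be exactly this bookkeeping of constants: one must verify that the error terms introduced by $T_1(u),T_2(u)$, by the $g$-contributions, and by the discrepancy between $E$ and $E_1$ are all $o(E_1(u))$ as $E_1(u)\to+\infty$, so that the inequality $\theta(u)\leq\tfrac12+o(1)$ survives with the crucial strict sign $<1$. Once $\theta(u)<1$ is established, $\psi(u)=1$ is immediate, hence $E_1\equiv E$ in a neighborhood of $u$ (any $w$ with $\theta(w)<1$ has $\psi(w)=1$, and $\theta$ is continuous), so $DE(u)=DE_1(u)=0$ and $E(u)=E_1(u)$, completing the proof. The choice of $M_0$ is then dictated by requiring (i) $|T_1|,|T_2|\leq\tfrac12$, (ii) the absorption $C|E_1(u)|^{\frac{\sigma+1}{\mu}}\leq\tfrac12 c_0\int(F+a_2)$ or an equivalent bound, and (iii) $\theta(u)<1$; all three hold for all large $E_1(u)$.
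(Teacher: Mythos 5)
Your proposal is correct and follows essentially the same route as the paper's proof: reduce the lemma to showing $\theta(u)=Q(u)^{-1}\int_\Omega(F(x,u)+a_2)\,dx<1$, then rerun the estimate of Proposition \ref{prop3-2} on the perturbed Euler--Lagrange identity, using Lemma \ref{lemma3-3} to make $T_1(u),T_2(u)$ small and the $G$-term of order $|E_1(u)|^{\frac{\sigma+1}{\mu}}$ once $E_1(u)\geq M_0$ is large, so that the factor-of-two slack built into $Q(u)=2A_0(E(u)^2+1)^{1/2}$ forces $\theta(u)<1$, hence $\psi\equiv 1$ near $u$, $E_1(u)=E(u)$ and $DE(u)=DE_1(u)=0$. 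The paper carries this out via a case analysis on $\theta(u)$ and phrases the conclusion as replacing $A_0$ by a constant strictly smaller than $2A_0$, which is precisely the constant bookkeeping you identified and resolved in the same way.
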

\begin{proof}
To prove Lemma \ref{lemma3-4}, it is sufficient to show that if $M_0$ is large and $u$ is a critical point of $E_1$ with $E_1(u)\geq M_0$, then
\begin{equation}\label{3-28}
Q(u)^{-1}\int_{\Omega}(F(x,u)+a_2) dx<1.
\end{equation}
In fact, \eqref{3-28} gives $\theta<1$ and $\psi\equiv1$ in a neighborhood of $u$. That means $\chi'(\theta(u))=0$ and $T_{1}(u)=T_{2}(u)=0$. Then, it follows from \eqref{3-13} and \eqref{3-26} that $E_1(u)=E(u)$ and $DE_{1}(u)=DE(u)$, which yields  Lemma \ref{lemma3-4}.

Let $u\in H_{X,0}^{1}(\Omega)$ be a critical point of $E_{1}$. If $u\in \text{supp}~\psi$ with $\theta(u)<1$, then $T_1(u)=T_2(u)=0$ and $\psi(v)\equiv1 $ in a neighborhood of $u$. Hence, $u$ is also a critical point of $E$ and \eqref{3-9} gives \eqref{3-28}. For $u\notin \text{supp}~\psi$, it follows that $\theta(u)>2$, and $T_1(u)=T_2(u)=0$ due to $\chi'(\theta(u))=0$. Moreover, for sufficiently small $T_{1}(u)$ and $T_{2}(u)$,
\begin{equation}\label{3-29}
\begin{aligned}
E_{1}(u)&=E_1(u)-\frac{1}{2(1+T_1(u))}\langle DE_1(u), u\rangle\\
&=\frac{1+T_2(u)}{2(1+T_1(u))}\int_{\Omega}f(x,u)udx-\int_{\Omega}F(x,u)dx\\
&-\psi(u)\int_{\Omega}G(x,u)dx+\frac{\psi(u)+T_{1}(u)}{2(1+T_{1}(u))}\int_{\Omega}g(x,u)udx.
\end{aligned}
\end{equation}
 Then, we can deduce from \eqref{3-2}, \eqref{3-13} and \eqref{3-29} that
\[ E(u)=E_{1}(u)-\int_{\Omega}G(x,u)dx=\frac{1}{2}\int_{\Omega}f(x,u)udx-\int_{\Omega}F(x,u)dx-\int_{\Omega}G(x,u)dx,\]
which also gives \eqref{3-28} by a similar estimate in \eqref{3-12}. In the case of $u\in \text{supp}~\psi$ with $1\leq \theta(u)\leq 2$, it follows from Lemma \ref{lemma3-3} that there exists a positive constant $M_{0}$ such that for $E_{1}(u)\geq M_{0}$, we have $|T_{1}(u)|\leq \frac{1}{2}$, $|T_{2}(u)|\leq \frac{1}{2}$ and $ \frac{1+T_{2}(u)}{1+T_{1}(u)}>\frac{1}{\mu}+\frac{1}{2}$.
Owing to \eqref{3-2} and \eqref{3-13}, we have
\begin{equation}\label{3-30}
\begin{aligned}
E_1(u)\leq |E_1(u)|&\leq |E(u)|+(1-\psi(u))\left|\int_{\Omega}G(x,u)dx\right|\leq |E(u)|+\left|\int_{\Omega}G(x,u)dx\right|.
\end{aligned}
\end{equation}
On the other hand, by \eqref{3-8} and \eqref{3-29}, we have
\begin{equation}\label{3-31}
\begin{aligned}
E_1(u)&\geq \left(\frac{1+T_2(u)}{2(1+T_1(u))}-\frac{1}{\mu}\right)\int_{\Omega}(uf(x,u)+a_{3}) dx\\
&-C(u)\int_{\Omega}|g(x,u)u|dx-\psi(u)\left|\int_{\Omega}G(x,u)dx\right|-a_8,
\end{aligned}
\end{equation}
where $C(u)=\left|\frac{\psi(u)+T_{1}(u)}{2(1+T_{1}(u))}\right|\leq 2$. Combining \eqref{3-30} and \eqref{3-31}, we obtain
\begin{equation}\label{3-32}
|E(u)|\geq \frac{1}{2}\left(\frac{1}{2}-\frac{1}{\mu}\right)\int_{\Omega}(uf(x,u)+a_{3}) dx-2\int_{\Omega}|g(x,u)u|dx-2\int_{\Omega}|G(x,u)|dx-a_8.
\end{equation}
According to \eqref{3-32} and Young's inequality,  we can also deduce \eqref{3-28} by similar approach of \eqref{3-12} with $A_{0}$ replaced by a larger constant which is smaller than $2A_{0}$.
\end{proof}

\begin{lemma}
\label{lemma3-5}
Under the hypotheses in Proposition \ref{Poincare} and assumptions (H.1)-(H.4), there is a constant $M_1\geq M_0$ such that $E_1$ satisfies $(PS)$ condition on $\widehat{A}_{M_{1}}:=\{u\in H_{X,0}^{1}(\Omega)|E_{1}(u)\geq M_{1}\}$, where $M_{0}>0$ is the positive constant appeared in Lemma \ref{lemma3-4}.
\end{lemma}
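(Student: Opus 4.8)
The plan is to verify the Palais--Smale condition on $\widehat{A}_{M_1}$ by the classical two-step scheme — boundedness of a $(PS)$ sequence, then extraction of a strongly convergent subsequence — with the whole argument resting on Lemma \ref{lemma3-3} to make the error terms $T_1,T_2$ uniformly small on a sufficiently high energy level. So let $\{u_m\}_{m=1}^{\infty}\subset H_{X,0}^1(\Omega)$ be a Palais--Smale sequence for $E_1$ with $u_m\in\widehat{A}_{M_1}$ for every $m$; by Definition \ref{def2-2} this means $M_1\leq E_1(u_m)\leq C$ and $\varepsilon_m:=\|DE_1(u_m)\|_{H_X^{-1}(\Omega)}\to 0$ as $m\to+\infty$. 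The first thing I would do is fix $M_1\geq M_0$ large enough that, by Lemma \ref{lemma3-3} (whose bounds tend to $0$ as $E_1\to+\infty$, since $\tfrac{\sigma+1}{\mu}<1$), one has $|T_1(u)|,|T_2(u)|\leq\delta$ whenever $E_1(u)\geq M_1$, with $\delta>0$ a small constant to be fixed in the estimates below. In particular $1+T_1(u_m)\geq\tfrac12$, $\tfrac{1+T_2(u_m)}{2(1+T_1(u_m))}\mu-1\geq\tfrac{\mu-2}{4}>0$, and $|\psi(u_m)+T_1(u_m)|$ stays bounded. I will also use that $2<\mu\leq p<2_{\tilde{\nu}}^{*}$, which follows from the growth bound in (H.2) combined with \eqref{3-7}.

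\textbf{Step 1: boundedness in $H_{X,0}^1(\Omega)$.} Starting from \eqref{3-26} with $v=u_m$ and forming the combination $E_1(u_m)-\tfrac{1}{2(1+T_1(u_m))}\langle DE_1(u_m),u_m\rangle$ exactly as in \eqref{3-29}, the quadratic term $\|Xu_m\|_{L^2(\Omega)}^2$ cancels; then, invoking (H.3) in the form $\int_\Omega f(x,u_m)u_m\,dx\geq\mu\int_\Omega F(x,u_m)\,dx-C$, the choice of $\delta$, the bound \eqref{3-19} for $\psi(u_m)\int_\Omega G(x,u_m)\,dx$ (which vanishes off $\operatorname{supp}\psi$), assumption (H.4), and the elementary inequalities \eqref{3-10}, \eqref{3-11} and \eqref{3-7}, I expect to arrive at
\[
\int_\Omega F(x,u_m)\,dx\leq C\bigl(1+\varepsilon_m\|u_m\|_{H_{X,0}^1(\Omega)}\bigr).
\]
Feeding this back into $\tfrac12\|Xu_m\|_{L^2(\Omega)}^2=E_1(u_m)+\int_\Omega F(x,u_m)\,dx+\psi(u_m)\int_\Omega G(x,u_m)\,dx$ and using the Friedrichs--Poincar\'{e} inequality \eqref{2-5} gives $\|u_m\|_{H_{X,0}^1(\Omega)}^2\leq C(1+\varepsilon_m\|u_m\|_{H_{X,0}^1(\Omega)})$, from which $\{u_m\}$ is bounded.

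\textbf{Step 2: strong convergence.} By boundedness and the degenerate Rellich--Kondrachov theorem (Proposition \ref{prop2-3}), after passing to a subsequence we have $u_m\rightharpoonup u$ weakly in $H_{X,0}^1(\Omega)$ and $u_m\to u$ strongly in $L^s(\Omega)$ for every $s\in[1,2_{\tilde{\nu}}^{*})$; in particular in $L^p(\Omega)$ and $L^\mu(\Omega)$. I would then test \eqref{3-26} against $v=u_m-u$. By (H.2) and Proposition \ref{prop2-7} the Nemytskii map $w\mapsto f(\cdot,w)$ carries bounded subsets of $L^p(\Omega)$ to bounded subsets of $L^{p/(p-1)}(\Omega)$, and by (H.4) the map $w\mapsto g(\cdot,w)$ carries bounded subsets of $L^\mu(\Omega)$ to bounded subsets of $L^{\mu/(\mu-1)}(\Omega)$; hence H\"{o}lder's inequality together with $\|u_m-u\|_{L^p(\Omega)}\to0$ and $\|u_m-u\|_{L^\mu(\Omega)}\to0$ forces $\int_\Omega f(x,u_m)(u_m-u)\,dx\to0$ and $\int_\Omega g(x,u_m)(u_m-u)\,dx\to0$, while the coefficients $1+T_2(u_m)$ and $\psi(u_m)+T_1(u_m)$ stay bounded. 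Since $\langle DE_1(u_m),u_m-u\rangle\to0$ and $1+T_1(u_m)\geq\tfrac12$, this implies $\int_\Omega Xu_m\cdot X(u_m-u)\,dx\to0$; subtracting $\int_\Omega Xu\cdot X(u_m-u)\,dx\to0$, which holds by weak convergence, yields $\|X(u_m-u)\|_{L^2(\Omega)}^2\to0$, and \eqref{2-5} then gives $u_m\to u$ in $H_{X,0}^1(\Omega)$. This proves the $(PS)$ condition on $\widehat{A}_{M_1}$.

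The hard part will be Step 1, and more precisely the fact that $E_1(u_m)$ is only bounded above, not large, so the smallness of $T_1(u_m),T_2(u_m)$ cannot be read off a given sequence but must be engineered into the choice of $M_1$ through the quantitative decay recorded in Lemma \ref{lemma3-3}, while keeping this threshold compatible with $M_1\geq M_0$ and with the small absorption parameters $\delta$ and $\eta$ used to close the estimates. All the delicate bookkeeping lives there: one must keep $\tfrac{1+T_2}{2(1+T_1)}\mu$ strictly above $1$ and absorb the $g$-contributions into $\int_\Omega F(x,u_m)\,dx$ via Young's inequality, whereas the compactness argument in Step 2 is routine once boundedness is in hand.
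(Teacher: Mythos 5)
Your proposal is correct, and it hinges on the same key ingredient as the paper's proof --- Lemma \ref{lemma3-3} is used to pick $M_1\geq M_0$ so large that $|T_1(u)|,|T_2(u)|$ are uniformly small on $\widehat{A}_{M_1}$ --- but both steps are implemented along a genuinely different route. For boundedness, the paper forms $E_1(u_m)-\rho\langle DE_1(u_m),u_m\rangle$ with $\rho$ chosen \emph{strictly between} $\tfrac{1}{\mu(1+T_2(u_m))}$ and $\tfrac{1}{2(1+T_1(u_m))}$ (see \eqref{3-33}--\eqref{3-35}), so that a positive multiple of $\|u_m\|_{H_{X,0}^1(\Omega)}^2$ survives on the right and boundedness is read off in one pass; you instead take $\rho=\tfrac{1}{2(1+T_1(u_m))}$ exactly, cancel the quadratic term, first bound $\int_\Omega F(x,u_m)\,dx$ by $C(1+\varepsilon_m\|u_m\|_{H_{X,0}^1(\Omega)})$, and then recover $\|Xu_m\|_{L^2(\Omega)}^2$ from the definition \eqref{3-13} of $E_1$ together with \eqref{2-5} (in this feedback step the term $\psi(u_m)\int_\Omega G(x,u_m)\,dx$ must also be controlled, e.g.\ via \eqref{3-19} and $E_1(u_m)\leq K$, which you have at hand); both are valid variants of the Ambrosetti--Rabinowitz computation. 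For the compactness step the divergence is larger: the paper uses the operator splitting $DE_1=(1+T_1)L+(1+T_2)K_1+(\psi+T_1)K_2$, inverts $L$ by Lax--Milgram, and exploits that $K_1,K_2$ map bounded sets to relatively compact sets (Proposition \ref{prop2-8}), passing to subsequences along which $T_1,T_2,\psi$ and $L^{-1}K_i(u_m)$ converge; you instead test \eqref{3-26} against $u_m-u$ and combine the compact embedding of Proposition \ref{prop2-3} with Nemytskii boundedness in $L^{p/(p-1)}(\Omega)$ and $L^{\mu/(\mu-1)}(\Omega)$ (your remark that $\mu\leq p<2_{\tilde{\nu}}^{*}$, deduced from (H.2) and \eqref{3-7}, is exactly what makes the $L^{\mu}$-convergence available) and the uniform bound $1+T_1(u_m)\geq\tfrac12$ to conclude $\|X(u_m-u)\|_{L^2(\Omega)}\to 0$. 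Your route is more elementary and self-contained; the paper's buys the reuse of the same machinery (Proposition \ref{prop2-8}, the invertible map $L$) in Section \ref{Section4}.
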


\begin{proof}
It is sufficient to show there exists a positive constant $M_1> M_0$ such that if the sequence $\{u_m\}_{m=1}^{\infty}\subset H_{X,0}^1(\Omega)$ satisfies $M_1\leq E_1(u_m)\leq K$ and $DE_1(u_m)\to 0$ as $m\to \infty$, then $\{u_m\}_{m=1}^{\infty}$ is bounded in $H_{X,0}^1(\Omega)$ and admits a convergent subsequence.

For sufficiently large $m$ (such that $\|DE_1(u_m)\|_{H_{X}^{-1}(\Omega)}<1$) and any $\rho>0$, it derives from \eqref{2-5}, \eqref{3-13} and \eqref{3-26} that
\begin{equation}\label{3-33}
\begin{aligned}
K+\rho\|u_m\|_{H_{X,0}^1(\Omega)}&\geq E_1(u_m)-\rho\langle DE_1(u_m), u_m\rangle\\
&\geq \frac{\lambda_1}{1+\lambda_1}\left(\frac{1}{2}-\rho(1+T_1(u_m))\right)\|u_m\|^2_{H_{X,0}^1(\Omega)}\\
&+\rho(1+T_2(u_m))\int_{\Omega}f(x,u_m)u_m dx-\int_{\Omega}F(x,u_m)dx\\
&+\rho(\psi(u_m)+T_1(u_m))\int_{\Omega}g(x,u_{m})u_m dx-\psi(u_{m})\int_{\Omega}G(x,u_{m})dx.
\end{aligned}
\end{equation}
On the other hand, by Lemma \ref{lemma3-3} we can find a positive constant $M_1\geq M_{0}$ such that $|T_{1}(u)|, |T_{2}(u)|\leq \frac{1}{2}$ and $ \frac{1+T_{2}(u)}{1+T_{1}(u)}>\frac{1}{\mu}+\frac{1}{2}>\frac{2}{\mu}$ on $\widehat{A}_{M_{1}}=\{u\in H_{X,0}^{1}(\Omega)|E_{1}(u)\geq M_{1}\}$. Now, taking
 $\rho>0$ and $\varepsilon>0$ such that
\begin{equation}\label{3-34}
\frac{1}{2(1+T_1(u_m))}> \rho +\varepsilon > \rho-\varepsilon > \frac{1}{\mu(1+T_2(u_m))},
\end{equation}
we can deduce from  \eqref{3-8}, \eqref{3-33}, \eqref{3-34} and assumption (H.3) that
\begin{eqnarray}\label{3-35}
&&K+\rho\|u_m\|_{H_{X,0}^1(\Omega)}\nonumber\\
&&\geq
\frac{\varepsilon\lambda_1}{1+\lambda_1}(1+T_1(u_m))\|u_m\|^2_{H_{X,0}^1(\Omega)}+\left(\frac{1}{\mu}+\varepsilon(1+T_2(u_m))\right)\int_{|u_m|\geq R_0}f(x,u_m)u_m dx\nonumber\\
&&+\rho(1+T_2(u_m))\int_{|u_m|\leq R_0}f(x,u_m)u_m dx-\int_{\Omega}F(x,u_m)dx\nonumber\\
&&+\rho(\psi(u_m)+T_1(u_m))\int_{\Omega}g(x,u_{m})u_m dx-\psi(u_{m})\int_{\Omega}G(x,u_{m})dx\nonumber\\
&&\geq \frac{\varepsilon\lambda_1}{2(1+\lambda_1)}\|u_m\|^2_{H_{X,0}^1(\Omega)}+\left(\frac{1}{\mu}+\frac{\varepsilon}{2}\right)\int_{|u_m|\geq R_0}f(x,u_m)u_m dx\\
&&-\int_{|u_m|\geq R_0}F(x,u_m)dx-\frac{3}{2}\int_{\Omega}|g(x,u_{m})u_{m}|dx-\int_{\Omega}|G(x,u_{m})|dx-C\nonumber\\
&&\geq \frac{\varepsilon\lambda_1\|u_m\|^2_{H_{X,0}^1(\Omega)}}{2(1+\lambda_1)}+\frac{\varepsilon \mu}{2}\int_{\Omega}(F(x,u_m)+a_{2})dx-\frac{3}{2}\int_{\Omega}|g(x,u_{m})u_{m}|dx-\int_{\Omega}|G(x,u_{m})|dx-C\nonumber\\
&&\geq \frac{\varepsilon\lambda_1}{2(1+\lambda_1)}\|u_m\|^2_{H_{X,0}^1(\Omega)}+\frac{\varepsilon \mu}{4}\int_{\Omega}(F(x,u_m)+a_{2})dx-C\nonumber\\
&&\geq \frac{\varepsilon\lambda_1}{2(1+\lambda_1)}\|u_m\|^2_{H_{X,0}^1(\Omega)}-C,\nonumber
\end{eqnarray}
which yields $\{u_m\}_{m=1}^{\infty}$ is bounded in $H_{X,0}^1(\Omega)$.

We next prove $\{u_m\}_{m=1}^{\infty}$ has a convergent subsequence in  $H_{X,0}^1(\Omega)$.
Consider the  quadratic form
\begin{equation}\label{3-36}
a[u,v]=\int_{\Omega}Xu\cdot Xvdx,~~~\forall u,v\in H_{X,0}^1(\Omega).
\end{equation}
Clearly,
\begin{equation}\label{3-37}
|a[u,v]|\leq \|u\|_{H_{X,0}^{1}(\Omega)}\cdot \|v\|_{H_{X,0}^{1}(\Omega)}.
\end{equation}
Therefore for any $u\in  H_{X,0}^{1}(\Omega)$,  $a[u,\cdot]\in H_{X}^{-1}(\Omega)$ determines a functional $L(u)\in H_{X}^{-1}(\Omega)$ satisfying
\begin{equation}\label{3-38}
  \langle L(u),v\rangle=a[u,v]~~~\forall v\in H_{X,0}^{1}(\Omega).
\end{equation}
It follows from \eqref{3-36} and \eqref{3-38} that $L: u\mapsto L(u)$ is a linear operator from  $H_{X,0}^{1}(\Omega)$ to $H_{X}^{-1}(\Omega)$. Besides, \eqref{3-37} gives
\begin{equation}\label{3-39}
\|L(u)\|_{H_{X}^{-1}(\Omega)}=\sup_{v\in H_{X,0}^{1}(\Omega), \|v\|_{H_{X,0}^{1}(\Omega)}\leq 1}|\langle L(u),v\rangle|\leq \|u\|_{H_{X,0}^{1}(\Omega)},
\end{equation}
which implies $L$ is a bounded linear operator from  $H_{X,0}^{1}(\Omega)$ to $H_{X}^{-1}(\Omega)$. Owing to Proposition \ref{Poincare}, we have
\begin{equation}\label{3-40}
  a[u,u]=\int_{\Omega}|Xu|^{2}dx\geq \frac{\lambda_{1}}{1+\lambda_{1}}\|u\|_{H_{X,0}^{1}(\Omega)}^{2}~~\mbox{for all}~u\in H_{X,0}^{1}(\Omega).
\end{equation}
Combining \eqref{3-37}, \eqref{3-40} and Lax-Milgram theorem,  $L:{H_{X,0}^1}(\Omega)\to {H_{X}^{-1}(\Omega)}$ is a bounded invertible linear map.\par

On the other hand, for any $u\in H_{X,0}^{1}(\Omega)$, we deduce from Proposition \ref{prop2-8} that the
linear functionals $K_{1}(u),K_{2}(u)$ given by
\begin{equation}\label{3-41}
  \langle K_{1}(u),v\rangle:=-\int_{\Omega}f(x,u)vdx\quad\mbox{and}\quad \langle K_{2}(u),v\rangle:=-\int_{\Omega}g(x,u)vdx,~~\forall v\in H_{X,0}^{1}(\Omega),
\end{equation}
belong to $H_{X}^{-1}(\Omega)$. Moreover,  $K_{1},K_{2}:H_{X,0}^{1}(\Omega)\to H_{X}^{-1}(\Omega)$ map the bounded sets in $H_{X,0}^{1}(\Omega)$ to the relatively compact sets in $H_{X}^{-1}(\Omega)$. It follows from \eqref{3-26} that the Fr\'{e}chet derivative of $E_{1}$ can be decomposed into
\begin{equation}\label{3-42}
 DE_{1}(u)=(1+T_1(u))L(u)+(1+T_2(u))K_{1}(u)+(\psi(u)+T_1(u))K_{2}(u)
 \end{equation}
for all $u\in H_{X,0}^{1}(\Omega)$. That means
\begin{equation}\label{3-43}
L^{-1}DE_{1}(u_m)=(1+T_1(u_m))u_m+(1+T_2(u_m))L^{-1}K_{1}(u_m)+(\psi(u_m)+T_1(u_m))L^{-1}K_{2}(u_{m}).
\end{equation}
Furthermore, since $\{T_1(u_m)\}_{m=1}^{\infty}$, $\{T_2(u_m)\}_{m=1}^{\infty}$ and $\{\psi(u_m)\}_{m=1}^{\infty}$ are bounded, there exists a subsequence $\{u_{m_k}\}_{k=1}^{\infty}\subset \{u_{m}\}_{m=1}^{\infty}$ such that
\begin{equation}\label{3-44}
\lim_{k\rightarrow\infty}T_1(u_{m_k})=\overline{a_1},~~\lim_{k\rightarrow\infty}T_2(u_{m_k})=\overline{a_2},~~\mbox{and}~~~\lim_{k\rightarrow\infty}\psi(u_{m_k})=\overline{a_3}.
\end{equation}
Observing  $\{u_{m_{k}}\}_{k=1}^{\infty}$ is also bounded in $H_{X,0}^1(\Omega)$, $L^{-1}K_{1}(u_{m_{k}})$ and $L^{-1}K_{2}(u_{m_{k}})$ converge along a subsequence $\{u_{m_{k_{j}}}\}_{j=1}^{\infty}\subset \{u_{m_k}\}_{k=1}^{\infty}$. As a result of \eqref{3-43} and \eqref{3-44}, we conclude that $\{u_{m_{k_j}}\}_{j=1}^{\infty}$ converges in $H_{X,0}^1(\Omega)$.
\end{proof}

From Lemma \ref{lemma3-4}, we see that  Theorem \ref{thm1} can be achieved by showing $E_1$ admits an unbounded sequence of critical points. We will proceed with this goal in several steps. First, we introduce a sequence of minimax values of $E_1$.

For any finite dimensional subspace $W\subset H_{X,0}^1(\Omega)$, if $u\in W$ such that $\|u\|_{H_{X,0}^{1}(\Omega)}=\rho>0$, we let $v=\frac{u}{\rho}$. Then $\|v\|_{H_{X,0}^{1}(\Omega)}=1$, and we obtain from assumption (H.3) and \eqref{3-6} that
\begin{equation}\label{3-45}
\begin{aligned}
E_{1}(u)&=E_{1}(\rho v)\\
&=\frac{\rho^2}{2}\int_{\Omega}|Xv|^2dx-\int_{\Omega}F(x,\rho v)dx-\psi(\rho v)\int_{\Omega}G(x,\rho v)dx\\
&\leq\frac{\rho^2}{2}-a_{1}\rho^{\mu}\int_{|\rho v|\geq R_0}|v|^{\mu}dx+|\Omega|\sup\limits_{x\in{\overline{\Omega}},|w|\leq R_0}|F(x,w)|+\int_{\Omega}|G(x,\rho v)|dx\\
&\leq\frac{\rho^2}{2}-a_{1}\rho^\mu\int_{|\rho v|\geq R_0}|v|^\mu dx+|\Omega|\sup\limits_{x\in{\overline{\Omega}},|w|\leq R_0}|F(x,w)|\\
&+\rho\int_{\Omega}|\alpha(x)v|dx+\frac{\beta}{\sigma+1}\rho^{\sigma+1}\int_{\Omega}|v|^{\sigma+1}dx\to -\infty, ~\mbox{as}~\rho\rightarrow {+\infty}.
\end{aligned}
\end{equation}
  The estimate \eqref{3-45}  indicates that for any finite dimensional subspace $W\subset H_{X,0}^{1}(\Omega)$, there is a constant $R=R(W)>0$ such that $E_{1}(u)< 0$ for all $u \in W$ with $ \|u\|_{H_{X,0}^{1}(\Omega)}\geq R$. In particular, for each $j\geq 1$, we let $W_j:=\text{span}\{\varphi_k|1\leq k\leq j\}$ and $W_j^{\perp}:=\text{span}\{\varphi_k|k\geq j+1\}$ be the orthogonal complement of $W_j$ in $H_{X,0}^1(\Omega)$, where $\varphi_{k}$ is the $k$-th Dirichlet eigenfunction of $-\triangle_{X}$.
 We can choose an increasing positive sequence $\{R_j\}_{j=1}^{\infty}$ such that $$R_j\geq \lambda_j^{\frac{r}{2(p-2)}}$$ and $E_1(u)< 0$ for all $u\in W_j$ with $\|u\|_{H_{X,0}^{1}(\Omega)}\geq R_j$, where $r=\tilde{\nu}(1-\frac{p}{2_{\tilde{\nu}}^*})$. \par

For each $j\in \mathbb{N}^{+}$, we let $D_j:=B_{R_j}\cap W_j$ and
\[G_j:=\{h\in C(D_j,H_{X,0}^{1}(\Omega))| ~h ~\mbox{is odd and} ~h=\mbox{\textbf{id} on }\partial B_{R_j}\cap W_j \},\]
where $B_{R}=\{u\in H_{X,0}^{1}(\Omega)|\|u\|_{H_{X,0}^{1}(\Omega)}\leq R\}$ is the closed ball of radius $R$ in $H_{X,0}^{1}(\Omega)$ and $\textbf{id}$ denotes the identity map. Clearly, $\textbf{id}\in G_{j}$. Besides, for each $k\in \mathbb{N}^{+}$, we define
\begin{equation}\label{3-46}
b_k:=\inf\limits_{h\in G_k}\max\limits_{u\in D_k}E_1(h(u)),
\end{equation}
\[U_k:=\{u=t\varphi_{k+1}+w|~t\in [0, R_{k+1}], w\in B_{R_{k+1}}\cap W_k, \|u\|_{H_{X,0}^{1}(\Omega)}\leq R_{k+1}\},\]
\[  Q_k:=(\partial B_{R_{k+1}}\cap W_{k+1})\cup ((B_{R_{k+1}}\setminus B_{R_{k}})\cap W_k) \]
and
\begin{align*}
\Lambda_k:=\{H\in C(U_k, H_{X,0}^{1}(\Omega))|~H|_{D_k}\in G_k ~\mbox{and}~H(u)=u~\mbox{if}~u\in Q_k\cap U_{k}\}.
\end{align*}
We also set
\begin{equation}\label{3-47}
c_k:=\inf\limits_{H\in \Lambda_k}\max\limits_{u\in U_k}E_1(H(u)).
\end{equation}
Observing that $D_{k}\subset U_{k}$ and $H|_{D_{k}}\in G_{k}$ for any $H\in \Lambda_k$, it follows that
\[ \max_{u\in U_{k}}E_{1}(H(u))\geq \max_{u\in D_{k}}E_{1}(H(u))=\max_{u\in D_{k}}E_{1}(H|_{D_{k}}(u))\geq \inf_{h\in G_{k}}\max_{u\in D_{k}}E_{1}(h(u)), \]
which means $c_k\geq b_k$.

\begin{proposition}
\label{prop3-3}
Assume $c_k>b_k\geq M_1$. For $\delta\in (0, c_k-b_k)$, we denote by
\[\Lambda_k(\delta):=\{H\in \Lambda_k|~E_1(H(u))\leq b_k+\delta ~\mbox{for}~u\in D_k\}\]
and
\[c_k(\delta):=\inf\limits_{H\in \Lambda_k(\delta)}\max\limits_{u\in U_k}E_1(H(u)).\]
Then $c_k(\delta)$ is a critical value of $E_1$.

\end{proposition}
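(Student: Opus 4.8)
The plan is the classical minimax/deformation argument behind the perturbation from symmetry method (cf. \cite{Rabinowitz1982,Rabinowitz1986}), with the single twist that $E_1$ is not even, only ``almost even'' by Lemma \ref{lemma3-2}, so the deformation cannot be taken odd; instead the extra constraint built into $\Lambda_k(\delta)$ is used to keep the relevant part of the construction strictly below the level $c_k(\delta)$, where the deformation acts trivially.

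\emph{Step 1 (well-definedness).} First I would check $\Lambda_k(\delta)\neq\emptyset$: by the definition of $b_k$ there is $h_0\in G_k$ with $\max_{u\in D_k}E_1(h_0(u))\le b_k+\delta$; since $h_0$ agrees with the identity on $\partial D_k=\partial B_{R_k}\cap W_k$, the map equal to $h_0$ on $D_k$ and to $\mathbf{id}$ on $Q_k\cap U_k$ is well defined and continuous on $D_k\cup(Q_k\cap U_k)$, and by a Dugundji-type extension it extends to $H_0\in C(U_k,H_{X,0}^1(\Omega))$ with $H_0\in\Lambda_k(\delta)$. Since $\Lambda_k(\delta)\subset\Lambda_k$ we get $c_k(\delta)\ge c_k$, and $c_k(\delta)<+\infty$ because $U_k$ is compact and $E_1$ continuous. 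As $\delta<c_k-b_k$ and $b_k\ge M_1$, this gives $c_k(\delta)\ge c_k>b_k+\delta\ge M_1$; in particular the interval $[\,b_k+\delta,\ c_k(\delta)\,]$ lies strictly above $M_1$.

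\emph{Step 2 (deformation).} Suppose $c:=c_k(\delta)$ is not a critical value of $E_1$. Fix $\bar\varepsilon>0$ so small that $c-\bar\varepsilon>\max\{M_1,\ b_k+\delta\}$, which is possible by Step 1. Since $E_1\in C^1(H_{X,0}^1(\Omega),\mathbb{R})$ satisfies the $(PS)$ condition on $\widehat{A}_{M_1}$ (Lemma \ref{lemma3-5}) and the whole band $[c-\bar\varepsilon,c+\bar\varepsilon]$ sits inside $\{E_1\ge M_1\}$, the quantitative deformation lemma (cf. \cite{Struwe2000,Willem1997}) applies at level $c$: there exist $\varepsilon\in(0,\bar\varepsilon)$ and $\eta\in C([0,1]\times H_{X,0}^1(\Omega),H_{X,0}^1(\Omega))$ with $\eta(0,\cdot)=\mathbf{id}$, with $\eta(t,u)=u$ whenever $|E_1(u)-c|\ge\bar\varepsilon$, with $t\mapsto E_1(\eta(t,u))$ nonincreasing, and with $\eta\bigl(1,\{E_1\le c+\varepsilon\}\bigr)\subset\{E_1\le c-\varepsilon\}$.

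\emph{Step 3 (contradiction) and the main obstacle.} Pick $H\in\Lambda_k(\delta)$ with $\max_{u\in U_k}E_1(H(u))\le c+\varepsilon$ and set $\widetilde H:=\eta(1,\cdot)\circ H\in C(U_k,H_{X,0}^1(\Omega))$. On $Q_k\cap U_k$ one has $H(u)=u$, and $E_1(u)<0<c-\bar\varepsilon$ by \eqref{3-45} together with the choice of $R_k,R_{k+1}$, hence $\widetilde H(u)=\eta(1,u)=u$. On $D_k$ one has $E_1(H(u))\le b_k+\delta<c-\bar\varepsilon$, hence $\widetilde H(u)=\eta(1,H(u))=H(u)$; therefore $\widetilde H|_{D_k}=H|_{D_k}\in G_k$ and $E_1(\widetilde H(u))\le b_k+\delta$ on $D_k$. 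Thus $\widetilde H\in\Lambda_k(\delta)$. But $H(U_k)\subset\{E_1\le c+\varepsilon\}$ forces $\widetilde H(U_k)\subset\{E_1\le c-\varepsilon\}$, so $\max_{u\in U_k}E_1(\widetilde H(u))\le c-\varepsilon<c_k(\delta)$, contradicting the definition of $c_k(\delta)$ as an infimum; hence $c_k(\delta)$ is a critical value of $E_1$. The crux of the argument is precisely keeping $\widetilde H$ inside $\Lambda_k(\delta)$: because $E_1$ is not even we cannot take $\eta(1,\cdot)$ odd, and the resolution is that the constraint defining $\Lambda_k(\delta)$ pins $E_1\circ H$ below $b_k+\delta$ on $D_k$, which is under the band where $\eta$ is nontrivial, so $\eta(1,\cdot)$ leaves $H|_{D_k}$ (hence its membership in the odd class $G_k$) untouched. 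The remaining points — $E_1<0$ on $Q_k$ from the radius choice and \eqref{3-45}, the strict inequality $c_k(\delta)>M_1$ so that $(PS)$ on $\widehat A_{M_1}$ is enough, and the Dugundji extension giving $\Lambda_k(\delta)\neq\emptyset$ — are routine.
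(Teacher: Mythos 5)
Your proposal is correct and follows essentially the same route as the paper's proof: non-emptiness of $\Lambda_k(\delta)$ via a Dugundji extension, then a deformation argument at the level $c_k(\delta)$ using the $(PS)$ condition of Lemma \ref{lemma3-5} on $\widehat A_{M_1}$, with the key observation that the constraint in $\Lambda_k(\delta)$ (together with $E_1\le 0$ on $Q_k\cap U_k$) keeps $H$ strictly below the deformation band, so $\eta(1,H(\cdot))$ remains in $\Lambda_k(\delta)$ and contradicts the infimum. The only cosmetic difference is that the paper fixes $\bar\varepsilon=\tfrac12(c_k-b_k-\delta)$ explicitly and verifies oddness of $\eta(1,H(\cdot))$ on $D_k$ pointwise, whereas you choose $\bar\varepsilon$ abstractly and note directly that $\eta(1,H(\cdot))|_{D_k}=H|_{D_k}\in G_k$; both are equivalent.
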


\begin{proof}
We first show that $\Lambda_k(\delta)\neq \varnothing$. The definition of $b_k$ implies for any $\delta\in (0, c_k-b_k)$, there exists a map $h\in G_k$ such that $E_1(h(u))\leq b_k+\delta$ holds for all $u\in D_k$. Let
\begin{equation}\label{3-48}
 H_{0}(u):=\left\{\begin{array}{ll}{u,} & { u\in Q_k\cap U_{k}} \\[2mm] {h(u),} & {u\in D_k.}\end{array}\right.
\end{equation}
It follows that $H_{0}\in C((Q_k\cap U_{k})\cup D_{k},H_{X,0}^{1}(\Omega))$. Since $(Q_k\cap U_{k})\cup D_{k}$ is closed in $U_{k}$, by Dugundji's extension theorem (see \cite[Theorem 4.1]{Dugundji1951}) we can find a map $\widetilde{H}\in C(U_k, H_{X,0}^{1}(\Omega))$ such that $\widetilde{H}(u)=H_{0}(u)$ for all $u\in (Q_k\cap U_{k})\cup D_{k}$. Clearly, $\widetilde{H} \in \Lambda_k(\delta)$ and therefore $\Lambda_k(\delta)\neq \varnothing$.\par

Observe that $\Lambda_{k}(\delta)\subset \Lambda_{k}$ gives $c_{k}(\delta)\geq c_{k}$. Suppose $c_k(\delta)$ is not a critical value of $E_1$. Set $\bar{\varepsilon}:=\frac{1}{2}(c_k-b_k-\delta)>0$.
By Lemma \ref{lemma3-5} and Deformation Theorem (see \cite[Lemma 1.60]{Rabinowitz1982} and \cite[Theorem A.4]{Rabinowitz1986}), for $c_k(\delta)>M_{1}$ and $\bar{\varepsilon}>0$, there exist  $\varepsilon\in (0, \bar{\varepsilon})$ and $\eta\in C([0,1]\times H_{X,0}^{1}(\Omega), H_{X,0}^{1}(\Omega))$ such that
\begin{equation}\label{3-49}
\eta(t,u)=u ~~~\forall t\in [0,1]~ ~\mbox{if}~~E_1(u)\notin [c_k(\delta)-\bar{\varepsilon}, c_k(\delta)+\bar{\varepsilon}],
\end{equation}
and
\begin{equation}\label{3-50}
\eta(1, A_{c_k(\delta)+\varepsilon})\subset A_{c_k(\delta)-\varepsilon},
\end{equation}
where $A_c:=\{u\in H_{X,0}^{1}(\Omega)|~E_1(u)\leq c\}$.
According to the definition of $c_k(\delta)$, we can choose a $H\in \Lambda_k(\delta)$ such that
\begin{equation}\label{3-51}
\max\limits_{u\in U_k}E_1(H(u))\leq c_k(\delta)+\varepsilon.
\end{equation}

We claim that $\eta (1, H(\cdot))\in \Lambda_k(\delta)$. Clearly, $\eta (1, H(\cdot))\in C(U_k, H_{X,0}^{1}(\Omega))$.
Since $H\in \Lambda_k$, if $u\in Q_k\cap U_{k}$, $H(u)=u$ and therefore $E_1(H(u))=E_1(u)\leq 0$ according to the definitions of $R_k$ and $R_{k+1}$. Recalling that $c_k(\delta)\geq c_k>b_k\geq M_1>0$, we have $$c_{k}(\delta)-\bar{\varepsilon}=c_{k}(\delta)-\frac{1}{2}c_{k}+\frac{1}{2}b_{k}+\frac{1}{2}\delta>0,$$ which means \[ E_1(H(u))\notin [c_{k}(\delta)-\bar{\varepsilon}, c_{k}(\delta)+\bar{\varepsilon}]\qquad \forall u\in Q_k\cap U_{k}.\]
 Owing to \eqref{3-49}, we get  $\eta(1, H(u))=H(u)=u$ on $ Q_k\cap U_{k}$. Then, we show that $\eta(1,H(\cdot))|_{D_{k}}\in G_{k}$.
For any $u\in D_k$, $E_1(H(u))\leq b_k+\delta$ due to $H\in \Lambda_k(\delta)$ and therefore
\[E_1(H(u))-\frac{1}{2}(b_k+\delta)\leq \frac{1}{2}(b_k+\delta)<\frac{1}{2}c_k\leq c_k(\delta)-\frac{1}{2}c_k,\]
which yields
\begin{equation*}
E_1(H(u))< c_k(\delta)-\frac{1}{2}(c_k-b_k-\delta)=c_k(\delta)-\bar{\varepsilon}.
\end{equation*}
Using \eqref{3-49} again,  $\eta(1, H(u))=H(u)$ for all $u\in D_k$. Similarly, we can also get $\eta(1, H(-u))=H(-u)=-H(u)$ for all $u\in D_{k}$. Therefore, $\eta(1, H(\cdot))$ is odd on $ D_k$. Moreover, for any $u\in \partial B_{R_k}\cap W_k$, we have $H(u)=u$ and $E_{1}(u)\leq 0$, which implies $\eta(1, H(u))=u$ on $\partial B_{R_k}\cap W_k$.  Hence,  $\eta (1, H(\cdot))|_{D_k}\in G_k$ and $\eta (1, H(\cdot))\in \Lambda_k$. Furthermore, the arguments above indicate that, for $u\in D_k$, $E_1(\eta(1, H(u)))=E_1(H(u))\leq b_k+\delta$. Consequently, we obtain $\eta (1, H(\cdot))\in \Lambda_k(\delta)$.
Besides, from \eqref{3-51} we know  $H(u)\in A_{c_{k}(\delta)+\varepsilon}$ for all $u\in U_{k}$. Thus, \eqref{3-50} yields that
\[\max_{u\in U_k}E_1(\eta(1, H(u)))\leq c_k(\delta)-\varepsilon,\]
which contradicts the definition of $c_k(\delta)$.
\end{proof}

Next, we give the lower bound of $b_{k}$ by using the condition $(L)$ in Theorem \ref{thm1}.

\begin{proposition}
\label{prop3-4}
Under the assumptions in Theorem \ref{thm1}, there exist constants $C_{1}>0$ and $\tilde{k}\in \mathbb{N}^{+}$ such that
\begin{equation}\label{3-52}
  b_{k}\geq C_{1}\cdot k^{\frac{2}{\vartheta}\left(\frac{p}{p-2}-\frac{\tilde{\nu}}{2}\right)}(\ln k)^{-\kappa\left(\frac{p}{p-2}-\frac{\tilde{\nu}}{2}\right)}\qquad \forall k\geq \tilde{k}.
\end{equation}
\end{proposition}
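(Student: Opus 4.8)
The plan is to bound $b_k$ from below by the infimum of $E_1$ over a sphere lying in $W_{k-1}^{\perp}$ (using a topological linking property of the admissible class $G_k$) and then to feed in the eigenvalue bound $(L)$. For the linking step I would invoke the intersection lemma underlying Rabinowitz's perturbation scheme (see \cite{Rabinowitz1982,Rabinowitz1986}, and \cite{Chen-Chen-Yuan2022} for the subelliptic adaptation): since $\dim W_k=k$ and $W_k\cap W_{k-1}^{\perp}=\mathrm{span}\{\varphi_k\}$ is one-dimensional, every $h\in G_k$ (odd, equal to $\textbf{id}$ on $\partial B_{R_k}\cap W_k$) satisfies $h(D_k)\cap S_k(\rho)\neq\varnothing$ for every $\rho\in(0,R_k)$, where $S_k(\rho):=\{v\in W_{k-1}^{\perp}:\ \|v\|_{H_{X,0}^1(\Omega)}=\rho\}$. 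Consequently $b_k\geq\inf_{v\in S_k(\rho)}E_1(v)$ for each admissible $\rho$, and the task reduces to estimating $E_1$ from below on $S_k(\rho)$ and then optimizing in $\rho$.

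For $v\in S_k(\rho)$, expanding in the $L^2$-orthonormal eigenbasis and using $v\perp W_{k-1}$ gives $\int_\Omega|Xv|^2dx\geq\lambda_k\|v\|_{L^2(\Omega)}^2$, hence $\|v\|_{L^2(\Omega)}^2\leq\lambda_k^{-1}\rho^2$ and $\int_\Omega|Xv|^2dx\geq\tfrac{\lambda_k}{1+\lambda_k}\rho^2$. By (H.2) we have $\int_\Omega|F(x,v)|dx\leq C(\|v\|_{L^1(\Omega)}+\|v\|_{L^p(\Omega)}^p)$, by (H.4) and Hölder $\int_\Omega|G(x,v)|dx\leq C(\|v\|_{L^\mu(\Omega)}+\|v\|_{L^{\sigma+1}(\Omega)}^{\sigma+1})$, and $0\leq\psi\leq1$. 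Comparing \eqref{3-7} with (H.2) forces $\mu\leq p$, so $\sigma+1<\mu\leq p<2_{\tilde\nu}^{*}$ and all these exponents are subcritical; Hölder interpolation between $L^2$ and $L^{2_{\tilde\nu}^{*}}$ together with Corollary \ref{corollary2-1} then gives, for $2\leq q<2_{\tilde\nu}^{*}$,
\[ \|v\|_{L^q(\Omega)}^q\leq C\|v\|_{L^2(\Omega)}^{q(1-\theta_q)}\|v\|_{H_{X,0}^1(\Omega)}^{q\theta_q}\leq C\lambda_k^{-q(1-\theta_q)/2}\rho^q,\qquad\theta_q=\frac{\tilde\nu(q-2)}{2q}, \]
and $\|v\|_{L^q(\Omega)}\leq C\|v\|_{L^2(\Omega)}$ for $q<2$. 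Since $p(1-\theta_p)=r=\tilde\nu(1-p/2_{\tilde\nu}^{*})>0$, in particular $\|v\|_{L^p(\Omega)}^p\leq C\lambda_k^{-r/2}\rho^p$, while the $\lambda_k$-powers attached to $\|v\|_{L^\mu}$, $\|v\|_{L^{\sigma+1}}^{\sigma+1}$ and $\|v\|_{L^1}$ are strictly negative. Hence, once $k$ is large enough that $\tfrac{\lambda_k}{1+\lambda_k}\geq\tfrac12$ and $\lambda_k\geq1$,
\[ E_1(v)\geq\tfrac14\rho^2-C\lambda_k^{-r/2}\rho^p-C\lambda_k^{-(1-\theta_\mu)/2}\rho-C\lambda_k^{-(\sigma+1)(1-\theta_{\sigma+1})/2}\rho^{\sigma+1}-C\lambda_k^{-1/2}\rho. \]

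Then I would take $\rho=\rho_k:=\varepsilon\lambda_k^{r/(2(p-2))}$ with a small fixed $\varepsilon\in(0,1)$, which is admissible because $R_k\geq\lambda_k^{r/(2(p-2))}>\rho_k$. The first two terms become $\tfrac{\varepsilon^2}{4}\lambda_k^{r/(p-2)}$ and $C\varepsilon^p\lambda_k^{r/(p-2)}$, while each of the last three carries a $\lambda_k$-exponent strictly below $r/(p-2)$; for the $\rho_k^{\sigma+1}$-term this is the inequality $(\sigma+1)\big(\tfrac{r}{p-2}-(1-\theta_{\sigma+1})\big)<\tfrac{2r}{p-2}$, which simplifies to $\sigma<p-1$ and therefore holds since $\sigma<\mu-1\leq p-1$. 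Choosing $\varepsilon$ small with $\tfrac{\varepsilon^2}{4}-C\varepsilon^p\geq\tfrac{\varepsilon^2}{8}$ and then $\tilde k$ large so that the three lower-order terms are together $\leq\tfrac{\varepsilon^2}{16}\lambda_k^{r/(p-2)}$ for $k\geq\tilde k$, we obtain $E_1(v)\geq\tfrac{\varepsilon^2}{16}\lambda_k^{r/(p-2)}$ on $S_k(\rho_k)$, hence $b_k\geq\tfrac{\varepsilon^2}{16}\lambda_k^{r/(p-2)}$. Finally $(L)$ and $r/(p-2)>0$ yield $b_k\geq\tfrac{\varepsilon^2}{16}\big(Ck^{2/\vartheta}(\ln k)^{-\kappa}\big)^{r/(p-2)}=C_1\,k^{\frac{2r}{\vartheta(p-2)}}(\ln k)^{-\frac{\kappa r}{p-2}}$, and since $\tfrac{r}{p-2}=\tfrac{p}{p-2}-\tfrac{\tilde\nu}{2}$ this is exactly \eqref{3-52}.

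The one genuinely delicate ingredient is the linking property of the first step: it is not a Brouwer-degree statement — the degree of $u\mapsto\big(P_{k-1}h(u),\ \|(I-P_{k-1})h(u)\|-\rho\big)$, with $P_{k-1}$ the orthogonal projection onto $W_{k-1}$, actually vanishes — but a $\mathbb{Z}_2$-equivariant genus/Borsuk--Ulam argument that uses the oddness of $h\in G_k$ in an essential way. Everything else is interpolation bookkeeping; its only subtle point is that at the critical scale $\rho_k\sim\lambda_k^{r/(2(p-2))}$ the perturbation ($g$-)terms remain of strictly lower order, which the chain $\sigma+1<\mu\leq p<2_{\tilde\nu}^{*}$ extracted from (H.2)--(H.4) guarantees.
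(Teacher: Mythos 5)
Your proposal is correct and follows essentially the same route as the paper's own proof: the Rabinowitz intersection lemma reduces $b_k$ to $\inf_{\partial B_\rho\cap W_{k-1}^{\perp}}E_1$, the Rayleigh--Ritz bound $\|Xv\|_{L^2}^2\geq\lambda_k\|v\|_{L^2}^2$ plus interpolation between $L^2$ and $L^{2^*_{\tilde\nu}}$ give $E_1\geq c\rho^2-C\lambda_k^{-r/2}\rho^p-\text{(lower order)}$, and the choice $\rho\sim\lambda_k^{r/(2(p-2))}$ together with condition $(L)$ yields \eqref{3-52}. The only difference is bookkeeping: the paper absorbs the $g$-terms and the terms of exponent $1$ and $\sigma+1$ via Young's inequality into $C\int|u|^p\,dx+C$ and the quadratic term, whereas you track their $\lambda_k$-powers explicitly and verify they are subordinate at the critical scale (your reduction to $\sigma<p-1$ checks out).
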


\begin{proof}
For $h \in G_k$ and $\rho<R_k$, by the intersection Theorem (see \cite[Lemma 1.44]{Rabinowitz1982}) we have $h(D_k)\cap \partial B_{\rho}\cap W_{k-1}^{\bot}\neq\varnothing$. Then

\begin{equation}\label{3-53}
\begin{aligned}
\max\limits_{u\in D_k}E_1(h(u))&=\max\limits_{u\in h(D_k)}E_1(u)\geq \max\limits_{u\in h(D_k)\cap \partial B_{\rho}\cap W_{k-1}^{\bot}}E_1(u)\\
&\geq \inf\limits_{u\in h(D_k)\cap \partial B_{\rho}\cap W_{k-1}^{\bot} }E_1(u)\geq\inf\limits_{u\in\partial B_{\rho}\cap W_{k-1}^{\bot} }E_1(u).
\end{aligned}
\end{equation}
Moreover, for any $u\in W_{k-1}^{\bot}\cap\partial B_{\rho}$, we deduce from Rayleigh–Ritz formula that
\begin{equation}\label{3-54}
\begin{aligned}
E_1(u)&=\frac{1}{2}\int_{\Omega}|Xu|^2dx-\int_{\Omega}F(x,u)dx-\psi(u)\int_{\Omega}G(x,u) dx\\
&\geq \frac{1}{2}\int_{\Omega}|Xu|^2dx-C\int_\Omega|u|^pdx-C\int_\Omega|u|dx-\left(\int_{\Omega}|\alpha(x)u(x)|dx+\frac{\beta}{\sigma+1}\int_{\Omega}|u|^{\sigma+1}dx\right)\\
      &\geq \frac{\lambda_1}{2(1+\lambda_1)}\|u\|_{H_{X,0}^{1}(\Omega)}^2-C\int_{\Omega}|u|^{p}dx-\|\alpha\|_{L^{\frac{2\tilde{\nu}}{\tilde{\nu}+2}}(\Omega)}\|u\|_{L^{\frac{2\tilde{\nu}}{\tilde{\nu}-2}}(\Omega)}-C\\
       &\geq \frac{\lambda_1}{2(1+\lambda_1)}\|u\|_{H_{X,0}^{1}(\Omega)}^2-C\int_{\Omega}|u|^{p}dx-C\|\alpha\|_{L^{\frac{2\tilde{\nu}}{\tilde{\nu}+2}}(\Omega)}\|u\|_{H_{X,0}^{1}(\Omega)}-C\\
       &\geq \frac{\lambda_1}{4(1+\lambda_1)}\|u\|_{H_{X,0}^{1}(\Omega)}^2-C\|u\|_{L^2(\Omega)}^r\|u\|_{L^{2_{\tilde{\nu}}^*}(\Omega)}^{p-r}-C\\
       &\geq \frac{\lambda_1}{4(1+\lambda_1)}\|u\|_{H_{X,0}^{1}(\Omega)}^2-C\lambda_{k}^{-\frac{r}{2}}\|u\|_{H_{X,0}^{1}(\Omega)}^p-C\\
       &=\left(\frac{\lambda_1}{4(1+\lambda_1)}-C\lambda_{k}^{-\frac{r}{2}}\rho^{p-2}   \right)\rho^2-C,
\end{aligned}
\end{equation}
where $r$ is a positive constant such that $\frac{r}{2}+\frac{p-r}{2_{\tilde{\nu}}^*}=1$ (i.e.  $r=\tilde{\nu}\left(1-\frac{p}{2_{\tilde{\nu}}^*}\right)$). Taking $\rho=\varepsilon_0\lambda_k^{\frac{r}{2(p-2)}}$ in \eqref{3-54} with $0<\varepsilon_0<\min\left\{1,\left(\frac{\lambda_1}{8C(1+\lambda_1)}\right)^{\frac{1}{p-2}}\right\}$, we get
\begin{equation}\label{3-55}
E_1(u)\geq C_{0}\lambda_{k}^{\frac{r}{p-2}}=C_{0}\lambda_k^{\left(\frac{p}{p-2}-\frac{\tilde{\nu}}{2}\right)}~~~\forall k\geq \hat{k},
\end{equation}
where $\hat{k}$ is some positive integer  and $C_{0}>0$ is a constant.
Thus, combining \eqref{3-46}, \eqref{3-53} and \eqref{3-55}, we get from condition $(L)$ that
\begin{equation}\label{3-56}
  b_{k}\geq C_{0}\lambda_k^{\left(\frac{p}{p-2}-\frac{\tilde{\nu}}{2}\right)}\geq  C_{1}\cdot k^{\frac{2}{\vartheta}\left(\frac{p}{p-2}-\frac{\tilde{\nu}}{2}\right)}(\ln k)^{-\kappa\left(\frac{p}{p-2}-\frac{\tilde{\nu}}{2}\right)}\qquad \forall k\geq \tilde{k},
\end{equation}
where $C_{1}>0$ is a positive constant and $\tilde{k}\geq \hat{k}$ is a positive integer.
\end{proof}

Finally, we show that $c_k>b_k$ holds for infinitely
many integers $k\in \mathbb{N}^{+}$.
\begin{proposition}
\label{prop3-5}
Let $\tilde{k}$ be the same constant given in \eqref{3-52}, for any positive integer $\bar{k}$ satisfying $\bar{k}\geq\tilde{k}$, if $c_k=b_k$ for all $k\geq \bar{k}$, then there exists a constant $\widetilde{M}>0$  such that
\begin{equation}\label{3-57}
b_k\leq \widetilde{M}~~~\forall k\geq 1.
\end{equation}
Hence, from Proposition \ref{prop3-4}, we know that $c_k>b_k$ must hold for infinitely
many $k$.

\end{proposition}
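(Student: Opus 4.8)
The plan is to run Rabinowitz's comparison between the two min-max families: assuming $c_k=b_k$ for all $k\ge\bar k$, I would convert the inequality relating $c_k$ and $b_{k+1}$ into a recursion for $\{b_k\}$ whose resulting growth rate is then beaten by the lower bound of Proposition~\ref{prop3-4} under assumption $(A1)$.

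\emph{Step 1 (comparison inequality).} I would first show that for all large $k$,
\[
b_{k+1}\le c_k+A\bigl(c_k^{\frac{\sigma+1}{\mu}}+1\bigr)+C ,
\]
with $A$ the constant of Lemma~\ref{lemma3-2}. Fix $\varepsilon>0$ and pick $H\in\Lambda_k$ with $\max_{u\in U_k}E_1(H(u))\le c_k+\varepsilon$. Writing each $u\in D_{k+1}=B_{R_{k+1}}\cap W_{k+1}$ uniquely as $u=t\varphi_{k+1}+w$ with $w\in W_k$, the half $\{t\ge0\}$ coincides with $U_k$, the half $\{t\le0\}$ with $-U_k$, and the two halves overlap along $D_k$. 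Define $\overline H\colon D_{k+1}\to H_{X,0}^{1}(\Omega)$ by $\overline H(u)=H(u)$ for $t\ge0$ and $\overline H(u)=-H(-u)$ for $t\le0$. Because $H|_{D_k}\in G_k$ is odd, the two branches match on $D_k$, so $\overline H$ is continuous; it is odd by construction; and $\overline H=\mathrm{id}$ on $\partial B_{R_{k+1}}\cap W_{k+1}$ since $H=\mathrm{id}$ on $Q_k\cap U_k$. Thus $\overline H\in G_{k+1}$, and
\[
b_{k+1}\le\max_{u\in D_{k+1}}E_1(\overline H(u))=\max\Bigl\{\max_{u\in U_k}E_1(H(u)),\ \max_{v\in U_k}E_1(-H(v))\Bigr\}.
\]
By Lemma~\ref{lemma3-2}, $E_1(-H(v))\le E_1(H(v))+A(|E_1(H(v))|^{\frac{\sigma+1}{\mu}}+1)$ pointwise; since $\frac{\sigma+1}{\mu}<1$, the function $s\mapsto s+A(|s|^{\frac{\sigma+1}{\mu}}+1)$ is bounded above on $(-\infty,0]$ and increasing for $s$ large, so both terms in the maximum are $\le(c_k+\varepsilon)+A((c_k+\varepsilon)^{\frac{\sigma+1}{\mu}}+1)+C$. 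Letting $\varepsilon\to0$ gives the inequality.

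\emph{Step 2 (recursion and contradiction).} Using $c_k=b_k$ together with $b_k\ge M_1>0$ for large $k$ (Lemma~\ref{lemma3-5} and Proposition~\ref{prop3-4}), Step~1 yields $b_{k+1}\le b_k+C'b_k^{\frac{\sigma+1}{\mu}}$ for large $k$. With $\beta:=1-\frac{\sigma+1}{\mu}\in(0,1)$, the elementary bound $(1+x)^{\beta}\le1+\beta x$ gives $b_{k+1}^{\beta}\le b_k^{\beta}+\beta C'$, which telescopes to $b_k^{\beta}\le b_{\bar k}^{\beta}+\beta C'k$, i.e.
\[
b_k\le C_2\,k^{\frac{\mu}{\mu-\sigma-1}}\qquad\text{for all }k .
\]
On the other hand, Proposition~\ref{prop3-4} gives $b_k\ge C_1k^{\frac{2}{\vartheta}(\frac{p}{p-2}-\frac{\tilde\nu}{2})}(\ln k)^{-\kappa(\frac{p}{p-2}-\frac{\tilde\nu}{2})}$ for $k\ge\tilde k$, and $(A1)$ is precisely $\frac{2}{\vartheta}\bigl(\frac{p}{p-2}-\frac{\tilde\nu}{2}\bigr)>\frac{\mu}{\mu-\sigma-1}$; combining the two estimates forces
\[
k^{\,\frac{2}{\vartheta}(\frac{p}{p-2}-\frac{\tilde\nu}{2})-\frac{\mu}{\mu-\sigma-1}}\le\frac{C_2}{C_1}\,(\ln k)^{\kappa(\frac{p}{p-2}-\frac{\tilde\nu}{2})},
\]
which is impossible for $k$ large. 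Hence $\{b_k\}$ cannot remain unbounded under this hypothesis, which is exactly \eqref{3-57}; and since \eqref{3-57} contradicts $\lim_{k\to\infty}b_k=+\infty$ from Proposition~\ref{prop3-4}, the supposition ``$c_k=b_k$ for all $k\ge\bar k$'' fails for every $\bar k\ge\tilde k$, so $c_k>b_k$ for infinitely many $k$.

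\emph{Main obstacle.} The delicate point is Step~1: checking carefully that the reflected map $\overline H$ genuinely belongs to $G_{k+1}$ (matching on $D_k$, the identity behaviour on $Q_k\cap U_k$, joint continuity of the glued map), and --- above all --- estimating the ``reflected'' maximum $\max_{v\in U_k}E_1(-H(v))$ when $E_1\circ H$ is very negative. It is exactly here that the sublinear asymmetry exponent $\frac{\sigma+1}{\mu}<1$ furnished by Lemma~\ref{lemma3-2} (hence assumption $(H.4)$ with $\sigma<\mu-1$) is indispensable: it simultaneously makes the correction term lower order and keeps the negative tail bounded, which is what caps the growth of $b_k$ at the rate $k^{\mu/(\mu-\sigma-1)}$ that $(A1)$ strictly improves upon.
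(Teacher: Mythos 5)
Your proof is correct and follows the same backbone as the paper's: an odd extension of $H\in\Lambda_k$ into a competitor for $b_{k+1}$, Lemma \ref{lemma3-2} to control the asymmetry, and a contradiction with Proposition \ref{prop3-4} under $(A1)$. Two remarks on where you diverge. First, a small imprecision in Step 1: the two halves $U_k$ and $-U_k$ of $D_{k+1}$ overlap along the whole slice $B_{R_{k+1}}\cap W_k$, not just along $D_k$; the matching of the two branches on the annular part $(B_{R_{k+1}}\setminus B_{R_k})\cap W_k$ uses that $H=\mathrm{id}$ there (this set lies in $Q_k$), which is odd, so the glued map is indeed well defined and in $G_{k+1}$ --- the argument survives, but the justification should invoke $Q_k$, exactly as the paper's terse ``extend oddly'' does implicitly; your treatment of the reflected maximum via the function $s\mapsto s+A(|s|^{\frac{\sigma+1}{\mu}}+1)$ being bounded above on $(-\infty,0]$ is a clean substitute for the paper's case analysis at the maximizing point $v_0$. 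Second, the final step is genuinely organized differently: the paper iterates $b_{k+1}\leq b_k(1+2Ab_k^{\frac{\sigma+1-\mu}{\mu}})$ to get $b_{k_2+l}\leq b_{k_2}\exp\bigl(2A\sum b_k^{\frac{\sigma+1-\mu}{\mu}}\bigr)$ and uses $(A1)$ plus the lower bound \eqref{3-52} to make the series converge, which yields the literal uniform bound \eqref{3-57} and then the contradiction with $b_k\to+\infty$; you instead use the concavity telescoping $b_{k+1}^{\beta}\leq b_k^{\beta}+\beta C'$ to get the polynomial bound $b_k\leq C_2k^{\mu/(\mu-\sigma-1)}$ and contradict \eqref{3-52} directly, since $(A1)$ says the lower-bound exponent strictly exceeds $\mu/(\mu-\sigma-1)$ and the logarithmic correction is harmless. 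The two readings of $(A1)$ are equivalent; note only that your route does not actually establish the intermediate estimate \eqref{3-57} (it shows the hypothesis $c_k=b_k$ for all $k\geq\bar k$ is outright contradictory), which is fine for the proposition's real purpose --- producing infinitely many $k$ with $c_k>b_k$ --- but if you want the statement exactly as printed, the paper's exponential-series bookkeeping is what delivers the uniform constant $\widetilde M$.
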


\begin{proof}
For any $\varepsilon>0$ and $k\geq \bar{k}$, the definition of $c_{k}$ allows us to choose a $H\in \Lambda_k$ such that
\begin{equation}\label{3-58}
\max\limits_{u\in U_k}E_1(H(u))\leq c_k+\varepsilon=b_k+\varepsilon.
\end{equation}
Observe that $D_{k+1}=U_k\cup (-U_k)=B_{R_{k+1}}\cap W_{k+1}$ is a compact set in finite dimensional space $W_{k+1}$. Hence, $H$ can be continuously extended to $D_{k+1}$ as an odd map which belongs to $G_{k+1}$. Therefore by \eqref{3-46}
\begin{equation}\label{3-59}
  b_{k+1}=\inf_{h\in G_{k+1}}\max_{u\in D_{k+1}}E_{1}(h(u))\leq \max_{u\in D_{k+1}}E_{1}(H(u))=E_{1}(H(v_{0}))
\end{equation}
for some $v_{0}\in D_{k+1}$ depending on $k$. If $v_{0}\in U_{k}$, by \eqref{3-58} and \eqref{3-59},
\begin{equation}\label{3-60}
  b_{k+1}\leq  E_{1}(H(v_{0}))\leq b_{k}+\varepsilon.
\end{equation}
Suppose $v_{0}\in -U_{k}$. Then, by Lemma \ref{lemma3-2} we obtain
\begin{equation}\label{3-61}
  E_{1}(-H(v_{0}))\geq E_{1}(H(v_{0}))-A(|E_{1}(H(v_{0}))|^{\frac{\sigma+1}{\mu}}+1).
\end{equation}
Since $b_{k}\to \infty$ as $k\to +\infty$ and $\sigma+1<\mu$,  \eqref{3-59} and \eqref{3-61} give that
 $ E_{1}(-H(v_{0}))=E_{1}(H(-v_{0}))>0$ for large $k$ (e.g. $k\geq k_{1}$ with $k_1\geq \bar{k}$). Using \eqref{3-60} and Lemma \ref{lemma3-2} again, we get
\begin{equation}\label{3-62}
\begin{aligned}
  E_{1}(H(v_{0}))&=E_{1}(-H(-v_{0}))\leq E_{1}(H(-v_{0}))+A\left((E_{1}(H(-v_{0})))^{\frac{\sigma+1}{\mu}}+1\right)\\
  &\leq b_{k}+\varepsilon+A[(b_{k}+\varepsilon)^{\frac{\sigma+1}{\mu}}+1]~~~~\forall k\geq k_{1}.
\end{aligned}
\end{equation}
Therefore, we conclude from \eqref{3-59}, \eqref{3-60} and \eqref{3-62} that
\begin{equation}\label{3-63}
  b_{k+1}\leq b_{k}+\varepsilon+A[(b_{k}+\varepsilon)^{\frac{\sigma+1}{\mu}}+1]~~~~\forall k\geq k_{1}.
\end{equation}
Since $\varepsilon$ is arbitrary, \eqref{3-63} derives
\begin{equation}\label{3-64}
  b_{k+1}\leq b_{k}+A(b_{k}^{\frac{\sigma+1}{\mu}}+1)\leq b_{k}+2Ab_{k}^{\frac{\sigma+1}{\mu}}=b_{k}\left(1+2Ab_{k}^{\frac{\sigma+1-\mu}{\mu}}\right)~~~~\forall k\geq k_{2},
\end{equation}
where $k_{2}\geq k_{1}$ such that $b_{k}\geq 1$ for $k\geq k_{2}$. By iteration we obtain  for any $l\in \mathbb{N}^{+}$,
\begin{equation}\label{3-65}
\begin{aligned}
 b_{k_{2}+l}&\leq   b_{k_{2}}\prod_{k=k_{2}}^{k_{2}+l-1}\left(1+2Ab_{k}^{\frac{\sigma+1-\mu}{\mu}}\right)\\
 &= b_{k_{2}}\exp\left(\sum_{k=k_{2}}^{k_{2}+l-1}\log\left(1+2Ab_{k}^{\frac{\sigma+1-\mu}{\mu}} \right) \right)\\
 &\leq b_{k_{2}}\exp\left(2A \sum_{k=k_{2}}^{k_{2}+l-1} b_{k}^{\frac{\sigma+1-\mu}{\mu}}\right).
\end{aligned}
\end{equation}

On the other hand, using Proposition \ref{prop3-4} we have
\[ b_{k}\geq C_{1}\cdot k^{\frac{2}{\vartheta}\left(\frac{p}{p-2}-\frac{\tilde{\nu}}{2}\right)}(\ln k)^{-\kappa\left(\frac{p}{p-2}-\frac{\tilde{\nu}}{2}\right)}~~~~~\forall k\geq k_{2}. \]
The condition $(A1)$ gives
 \[ \frac{2p}{\vartheta(p-2)}-\frac{\tilde{\nu}}{\vartheta}>\frac{\mu}{\mu-\sigma-1},\]
which means
\[ \frac{2}{\vartheta}\left(\frac{p}{p-2}-\frac{\tilde{\nu}}{2}\right)\frac{\sigma+1-\mu}{\mu}<-1. \]
Hence, there exists a positive constant $M_{2}$ such that
\begin{equation}\label{3-66}
  \sum_{k=k_{2}}^{\infty} b_{k}^{\frac{\sigma+1-\mu}{\mu}}\leq C_{1}^{\frac{\sigma+1-\mu}{\mu}}\sum_{k=k_{2}}^{\infty} k^{\frac{2}{\vartheta}\left(\frac{p}{p-2}-\frac{\tilde{\nu}}{2}\right)\frac{\sigma+1-\mu}{\mu}}(\ln k)^{\kappa\left( \frac{p}{p-2}-\frac{\tilde{\nu}}{2}\right)\frac{\mu-\sigma-1}{\mu}} \leq M_{2}<+\infty.
\end{equation}
Therefore, by \eqref{3-65} and \eqref{3-66}, we can find a positive constant $\widetilde{M}$ such that $b_{k}\leq \widetilde{M}$ for all $k\geq 1$.
\end{proof}

Now, let us finish the proof of Theorem \ref{thm1}.

\begin{proof}[Proof of Theorem \ref{thm1}]
From Proposition \ref{prop3-4}, there exists $\bar{k}_{1}\geq \tilde{k}$ such that $b_{k}\geq M_{1}\geq M_{0}$ for all $k\geq \bar{k}_{1}$. Owing to Proposition \ref{prop3-4} and Proposition \ref{prop3-5}, we can find an integer $l_{1}\geq \bar{k}_{1}$ such that $c_{l_{1}}>b_{l_{1}}$. Then, it follows from Lemma \ref{lemma3-4} and Proposition \ref{prop3-3} that there exists $\delta_{1}\in (0,c_{l_{1}}-b_{l_{1}})$ such that $c_{l_{1}}(\delta_{1})\geq c_{l_{1}}>b_{l_{1}}$ is a critical value of $E_{1}$ and is also a critical value of $E$. Next, using Proposition \ref{prop3-4} again, we can find a $\bar{k}_{2}\geq l_{1}$ such that $b_{k}\geq c_{l_{1}}(\delta_{1})+1$ for all $k\geq \bar{k}_{2}$. Similarly,
Proposition  \ref{prop3-4} and Proposition  \ref{prop3-5} imply that there exists $l_{2}\geq \bar{k}_{2}$ such that $c_{l_{2}}>b_{l_{2}}$. Moreover, Lemma \ref{lemma3-4} and Proposition  \ref{prop3-3} indicate there exists $\delta_{2}\in (0, c_{l_{2}}-b_{l_{2}})$ such that $c_{l_{2}}(\delta_{2})\geq c_{l_{2}}>b_{l_{2}}\geq c_{l_{1}}(\delta_{1})+1$ is  another critical value of $E$. Repeating this process, we can deduce that the functional $E$ possesses infinitely many critical points $\{u_m\}_{m=1}^{\infty}$ in $H_{X,0}^1(\Omega)$ such that
\begin{equation}\label{3-67}
 E(u_m)=c_{l_{m}}(\delta_{m}),\quad 0<c_{l_{1}}(\delta_{1})<c_{l_{2}}(\delta_{2})<\cdots<c_{l_{m}}(\delta_{m})<\cdots,
\end{equation}
and $c_{l_{m}}(\delta_{m})\to +\infty$ as $m\to+\infty$.
Besides, we can deduce from \eqref{3-2} and \eqref{3-16} that, there exists a positive constant $\widehat{C}>0$, such that
\begin{equation}\label{3-68}
|E(u)|\leq \widehat{C}\left(\|u\|_{H_{X,0}^{1}(\Omega)}+\|u\|_{H_{X,0}^{1}(\Omega)}^{\sigma+1}+\|u\|_{H_{X,0}^{1}(\Omega)}^{p} \right)~~~~\forall u\in H_{X,0}^{1}(\Omega).
\end{equation}
Hence,  \eqref{3-67}, \eqref{3-68} and Proposition \ref{prop3-1} imply that $\{u_{m}\}_{m=1}^{\infty}$ is also an unbounded sequence of weak solutions in $H_{X,0}^{1}(\Omega)$.
\end{proof}

\section{Proof of Theorem \ref{thm2}}
\label{Section4}
In this section, we shall prove Theorem \ref{thm2} by the arguments involving Morse index and degenerate Cwikel-Lieb-Rozenblum inequality.

For $2<p<2_{\tilde{\nu}}^{*}=\frac{2\tilde{\nu}}{\tilde{\nu}-2}$, we define
\begin{equation}\label{4-1}
I_p(u):=\frac{1}{2}\int_{\Omega}|Xu|^{2}dx-\frac{B}{p}\int_{\Omega}|u|^pdx~~~~\forall u\in H_{X,0}^{1}(\Omega),
\end{equation}
where $B>0$ is a positive constant which will be determined later. Clearly, we can deduce from Proposition \ref{prop3-1} that $I_p\in C^{1}(H_{X,0}^{1}(\Omega),\mathbb{R})$  by
 taking $g(x,u)=0$ and $f(x,u)=B|u|^{p-2}u$. Furthermore, we have
\begin{proposition}
\label{prop4-1}
 $I_p\in  C^{2}(H_{X,0}^{1}(\Omega),\mathbb{R})$ and
\begin{equation}\label{4-2}
 \langle I''_{p}(u)h,v\rangle=\int_{\Omega}Xh\cdot Xvdx-(p-1)B\int_{\Omega}|u|^{p-2}hv dx,
\end{equation}
where $I''_{p}$ is the second order G\^{a}teaux derivative of $I_{p}$.
\end{proposition}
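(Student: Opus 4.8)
The plan is to write $I_p = I - \frac{B}{p}J$ with $I(u)=\frac12\int_\Omega|Xu|^2\,dx$ and $J(u)=\int_\Omega|u|^p\,dx$, and to handle the two summands separately. The quadratic part $I$ is of class $C^\infty$: by the proof of Proposition \ref{prop3-1} one has $\langle I'(u),v\rangle=\int_\Omega Xu\cdot Xv\,dx$, and a further differentiation in $u$ produces the \emph{constant} bilinear form $(h,v)\mapsto\int_\Omega Xh\cdot Xv\,dx$, which is bounded on $H^1_{X,0}(\Omega)$ by \eqref{3-37}; thus $I''$ exists, is continuous, and $\langle I''(u)h,v\rangle=\int_\Omega Xh\cdot Xv\,dx$. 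Since $\frac{B}{p}\cdot p(p-1)=(p-1)B$, the identity \eqref{4-2} will follow once I show that $J\in C^2(H^1_{X,0}(\Omega),\mathbb{R})$ with $\langle J''(u)h,v\rangle=p(p-1)\int_\Omega|u|^{p-2}hv\,dx$. So all the substance lies in the nonlinear term.

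By Proposition \ref{prop3-1} (with $f(x,u)=B|u|^{p-2}u$ and $g\equiv0$) we already know $J\in C^1$ with $\langle J'(u),v\rangle=p\int_\Omega|u|^{p-2}uv\,dx$, regarded as a map $J':H^1_{X,0}(\Omega)\to H_X^{-1}(\Omega)$. To promote this to $C^2$ it suffices to exhibit a Gâteaux derivative of $J'$ which is continuous in $u$, since a map with a continuous Gâteaux derivative is Fréchet $C^1$ (the same mean-value argument as in \cite[Proposition 1.3]{Willem1997}). First I compute it: because $p>2$ the scalar function $s\mapsto|s|^{p-2}s$ is $C^1$ on $\mathbb{R}$ with derivative $(p-1)|s|^{p-2}$, so for fixed $u,h,v$ the integrand $|u+th|^{p-2}(u+th)\,v$ is differentiable in $t$ with derivative $(p-1)|u+th|^{p-2}hv$, which for $|t|\le1$ is dominated by $(p-1)(|u|+|h|)^{p-2}|h||v|\in L^1(\Omega)$ — the integrability coming from Hölder's inequality with exponents $\bigl(\tfrac{p}{p-2},p,p\bigr)$ and the continuous embedding $H^1_{X,0}(\Omega)\hookrightarrow L^p(\Omega)$ (Corollary \ref{corollary2-1}, valid since $2<p<2_{\tilde{\nu}}^{*}$). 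Differentiation under the integral sign then yields
\[
\frac{d}{dt}\Big|_{t=0}\langle J'(u+th),v\rangle=p(p-1)\int_\Omega|u|^{p-2}hv\,dx=:\mathcal{B}_u(h,v),
\]
and the same three-factor Hölder estimate gives $|\mathcal{B}_u(h,v)|\le p(p-1)\,\|u\|_{L^p(\Omega)}^{p-2}\|h\|_{L^p(\Omega)}\|v\|_{L^p(\Omega)}\le C\|u\|_{H^1_{X,0}(\Omega)}^{p-2}\|h\|_{H^1_{X,0}(\Omega)}\|v\|_{H^1_{X,0}(\Omega)}$, so $h\mapsto\mathcal{B}_u(h,\cdot)$ defines a bounded (symmetric) operator $A(u)\in\mathcal{L}(H^1_{X,0}(\Omega),H_X^{-1}(\Omega))$, the candidate second derivative.

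The remaining — and only delicate — point is that $u\mapsto A(u)$ is continuous from $H^1_{X,0}(\Omega)$ into $\mathcal{L}(H^1_{X,0}(\Omega),H_X^{-1}(\Omega))$. Given $u_k\to u$ in $H^1_{X,0}(\Omega)$, Hölder with exponents $\bigl(\tfrac{p}{p-2},p,p\bigr)$ yields, uniformly over $\|h\|_{H^1_{X,0}(\Omega)},\|v\|_{H^1_{X,0}(\Omega)}\le1$,
\[
\bigl|\langle(A(u_k)-A(u))h,v\rangle\bigr|\le C\bigl\||u_k|^{p-2}-|u|^{p-2}\bigr\|_{L^{p/(p-2)}(\Omega)},
\]
so it is enough to show $\||u_k|^{p-2}-|u|^{p-2}\|_{L^{p/(p-2)}(\Omega)}\to0$. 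But $H^1_{X,0}(\Omega)\hookrightarrow L^p(\Omega)$ gives $u_k\to u$ in $L^p(\Omega)$, and the Nemytskii operator attached to the Carathéodory function $(x,s)\mapsto|s|^{p-2}$ — which obeys the growth bound $|s|^{p-2}=|s|^{p/(p/(p-2))}$ — maps $L^p(\Omega)$ continuously into $L^{p/(p-2)}(\Omega)$ by Proposition \ref{prop2-7} (taking $q_1=p$, $q_2=\tfrac{p}{p-2}$, $a\equiv0$, $b=1$). Hence $A$ is continuous, so $DJ'(u)=A(u)$ and $J\in C^2(H^1_{X,0}(\Omega),\mathbb{R})$ with the asserted formula; combining with the $C^\infty$ behaviour of $I$ gives $I_p\in C^2(H^1_{X,0}(\Omega),\mathbb{R})$ and \eqref{4-2}. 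I expect the operator-norm continuity of $u\mapsto A(u)$ to be the main obstacle: it hinges entirely on the continuity of $u\mapsto|u|^{p-2}$ between the Lebesgue spaces $L^p$ and $L^{p/(p-2)}$, which the subcritical range $p<2_{\tilde{\nu}}^{*}$ makes available through Proposition \ref{prop2-7}, while the Gâteaux computation, the differentiation under the integral sign, and all the Hölder bookkeeping use only the single admissible triple of exponents $\bigl(\tfrac{p}{p-2},p,p\bigr)$.
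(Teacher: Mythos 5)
Your proposal is correct and follows essentially the same route as the paper: the paper likewise computes the difference quotient of $I_p'$, dominates it via the mean value bound $(p-1)(|u|+|h|)^{p-2}|h||v|$ and the H\"older triple $\bigl(\tfrac{p}{p-2},p,p\bigr)$, passes to the limit by dominated convergence, and then gets operator-norm continuity of $I_p''$ from the $L^p$-convergence $u_n\to u_0$ (Corollary \ref{corollary2-1}) together with the Nemytskii continuity of $u\mapsto|u|^{p-2}$ from $L^p(\Omega)$ to $L^{p/(p-2)}(\Omega)$ (Proposition \ref{prop2-7}). Splitting off the quadratic part and invoking the G\^ateaux-to-Fr\'echet upgrade explicitly are only presentational differences (and the reference to Proposition \ref{prop3-1} should use $f(x,u)=p|u|^{p-2}u$ for your $J$, a harmless constant slip).
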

\begin{proof}
Since $I_p\in C^{1}(H_{X,0}^{1}(\Omega),\mathbb{R})$, we have
\begin{equation}\label{4-3}
  \langle DI_p(u), v\rangle= \langle I'_p(u), v\rangle=\int_{\Omega}Xu\cdot Xv dx-B\int_{\Omega}|u|^{p-2}uvdx~~\forall u,v\in H_{X,0}^{1}(\Omega),
\end{equation}
where $DI_p$ is the Fr\'{e}chet derivative and $I'_{p}$ is G\^{a}teaux derivative. For any $u,v,h\in H_{X,0}^{1}(\Omega)$ and any $0<|t|<1$, we have
\begin{equation}\label{4-4}
\frac{1}{t}\langle I'_{p}(u+th)-I'_{p}(u),v\rangle=\int_{\Omega}Xh\cdot Xvdx-\frac{B}{t}\int_{\Omega}\left(|u+th|^{p-2}(u+th)-|u|^{p-2}u\right)vdx.
\end{equation}
Observing there exists $\lambda\in (0,1)$ such that
\begin{equation}\label{4-5}
\begin{aligned}
 \left|\left(\frac{|u+th|^{p-2}(u+th)-|u|^{p-2}u}{t}\right)v\right|&=(p-1)|u+\lambda th|^{p-2}|h||v|\\
 &\leq (p-1)\left(|u|+|h|\right)^{p-2}|h||v|.
\end{aligned}
\end{equation}
Because $u,v,h\in H_{X,0}^{1}(\Omega)\subset L^{p}(\Omega)$, the H\"{o}lder inequality implies
\begin{equation}\label{4-6}
  \int_{\Omega}|u+h|^{p-2}|h||v| dx\leq \|u+h\|_{L^{p}(\Omega)}^{p-2}\|h\|_{L^{p}(\Omega)}\|v\|_{L^{p}(\Omega)}.
\end{equation}
Using Lebesgue's dominated convergence theorem, we have
\begin{equation}\label{4-7}
\begin{aligned}
 \langle I''_{p}(u)h,v\rangle&=\lim_{t\to 0}\frac{1}{t}\langle I'_{p}(u+th)-I'_{p}(u),v\rangle\\
 &=\int_{\Omega}Xh\cdot Xvdx-(p-1)B\int_{\Omega}|u|^{p-2}hv dx~~~~\forall u,v,h\in H_{X,0}^{1}(\Omega),
\end{aligned}
\end{equation}
where $I''_{p}$ is the second order G\^{a}teaux derivative of $I_{p}$.

We next show the continuity of $ I''_{p}$. Suppose that $\{u_{n}\}_{n=1}^{\infty}$ is a sequence in $H_{X,0}^{1}(\Omega)$ such that $u_{n}\to u_{0}$ in $H_{X,0}^{1}(\Omega)$ as $n\to +\infty$. Corollary \ref{corollary2-1} gives that $u_{n}\to u_{0}$ in $L^{p}(\Omega)$ as $n\to +\infty$. Then, it derives from Proposition \ref{prop2-7} that $|u_{n}|^{p-2}\to |u_{0}|^{p-2}$ in $L^{\frac{p}{p-2}}(\Omega)$ as $n\to +\infty$. Therefore, for any $h,v\in H_{X,0}^{1}(\Omega)\subset L^{p}(\Omega)$, a direct calculation gives that
\begin{equation}\label{4-8}
\begin{aligned}
\left|\langle (I''_{p}(u_{n})-I''_{p}(u_{0}))h,v\rangle\right|&\leq (p-1)B\int_{\Omega}\left||u_{n}|^{p-2}-|u_{0}|^{p-2} \right||h||v|dx\\
&\leq (p-1)B\left(\int_{\Omega}\left||u_{n}|^{p-2}-|u_{0}|^{p-2} \right|^{\frac{p}{p-2}}dx\right)^{\frac{p-2}{p}}\|h\|_{L^{p}(\Omega)}\|v\|_{L^{p}(\Omega)}\\
&\to 0~~~\mbox{as}~~~n\to+\infty,
\end{aligned}
\end{equation}
which means $I_p\in  C^{2}(H_{X,0}^{1}(\Omega),\mathbb{R})$.
\end{proof}

On the other hand, since $DI_p(u)\in H_{X}^{-1}(\Omega)$ for each $u\in H_{X,0}^{1}(\Omega)$, the Riesz representation theorem gives
\begin{equation}\label{4-9}
  \langle DI_p(u), v\rangle=(\widetilde{I_p'}(u),v)_{H_{X,0}^{1}(\Omega)}~~~~\forall v\in H_{X,0}^{1}(\Omega),
\end{equation}
where $\widetilde{I_p'}(u)\in H_{X,0}^{1}(\Omega)$ is uniquely determined by $DI_p(u)$, and $(u,v)_{H_{X,0}^{1}(\Omega)}=\int_{\Omega}Xu\cdot Xv dx+\int_{\Omega}uv dx$ is the inner in $H_{X,0}^{1}(\Omega)$. We have the following compactness result of $\widetilde{I_{p}'}(u)$.

\begin{proposition}
\label{prop4-2}
The operator $\widetilde{I_p'}:H_{X,0}^{1}(\Omega)\to H_{X,0}^{1}(\Omega)$ given by \eqref{4-9} satisfies
\begin{equation}\label{4-10}
 \widetilde{I_p'}(u)=u+K_{3}(u)\qquad \forall u\in H_{X,0}^{1}(\Omega),
\end{equation}
where the operator $K_{3}:H_{X,0}^{1}(\Omega)\to H_{X,0}^{1}(\Omega)$ maps the bounded sets to the relatively compact sets.
\end{proposition}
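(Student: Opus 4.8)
The plan is to split $\widetilde{I_p'}$ into the identity plus a compact remainder. First I would rewrite the Dirichlet form in terms of the ambient inner product: since $\int_{\Omega}Xu\cdot Xv\,dx=(u,v)_{H_{X,0}^{1}(\Omega)}-\int_{\Omega}uv\,dx$, equation \eqref{4-3} becomes
\[
\langle DI_p(u),v\rangle=(u,v)_{H_{X,0}^{1}(\Omega)}-\int_{\Omega}uv\,dx-B\int_{\Omega}|u|^{p-2}uv\,dx\qquad\forall u,v\in H_{X,0}^{1}(\Omega).
\]
For fixed $u$, the functional $v\mapsto-\int_{\Omega}uv\,dx-B\int_{\Omega}|u|^{p-2}uv\,dx$ is bounded on $H_{X,0}^{1}(\Omega)$: the first term is controlled by Cauchy--Schwarz together with Proposition \ref{Poincare}, and the second by H\"{o}lder's inequality (observing that $|u|^{p-2}u\in L^{p/(p-1)}(\Omega)$ because $u\in L^{p}(\Omega)$) combined with the Sobolev embedding $H_{X,0}^{1}(\Omega)\hookrightarrow L^{p}(\Omega)$ from Corollary \ref{corollary2-1}. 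The Riesz representation theorem then produces a unique $K_{3}(u)\in H_{X,0}^{1}(\Omega)$ representing this functional, and comparison with \eqref{4-9} gives \eqref{4-10}.

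Next I would verify that $K_{3}$ maps bounded sets to relatively compact sets. Given a bounded sequence $\{u_{n}\}_{n=1}^{\infty}$ in $H_{X,0}^{1}(\Omega)$, the compact embedding of Proposition \ref{prop2-3} (available because $2<p<2_{\tilde{\nu}}^{*}$) yields a subsequence $\{u_{n_k}\}$ converging strongly in $L^{p}(\Omega)$, hence also in $L^{2}(\Omega)$, to some $u$. Applying Proposition \ref{prop2-7} with $h(x,s)=|s|^{p-2}s$, $q_{1}=p$ and $q_{2}=\tfrac{p}{p-1}$ (so that $q_{1}/q_{2}=p-1$), the superposition operator $w\mapsto|w|^{p-2}w$ is continuous from $L^{p}(\Omega)$ into $L^{p/(p-1)}(\Omega)$, so $|u_{n_k}|^{p-2}u_{n_k}\to|u|^{p-2}u$ in $L^{p/(p-1)}(\Omega)$.

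Then, for any $v\in H_{X,0}^{1}(\Omega)$ with $\|v\|_{H_{X,0}^{1}(\Omega)}\le1$, Cauchy--Schwarz, H\"{o}lder's inequality and Corollary \ref{corollary2-1} give
\[
\big|(K_{3}(u_{n_k})-K_{3}(u),v)_{H_{X,0}^{1}(\Omega)}\big|\le C\Big(\|u_{n_k}-u\|_{L^{2}(\Omega)}+\big\||u_{n_k}|^{p-2}u_{n_k}-|u|^{p-2}u\big\|_{L^{p/(p-1)}(\Omega)}\Big),
\]
and taking the supremum over such $v$ shows $K_{3}(u_{n_k})\to K_{3}(u)$ in $H_{X,0}^{1}(\Omega)$, which yields the stated compactness. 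The argument is essentially routine; the only point that needs care is the bookkeeping of exponents --- recognizing that $|u|^{p-2}u$ naturally lies in the dual space $L^{p/(p-1)}(\Omega)$ and that the compact embedding $H_{X,0}^{1}(\Omega)\hookrightarrow L^{p}(\Omega)$ is at our disposal precisely because $p$ is strictly below the critical exponent $2_{\tilde{\nu}}^{*}$, which is what lets Proposition \ref{prop2-7} upgrade the $L^{p}$-convergence of $u_{n_k}$ to convergence of the nonlinear term in $L^{p/(p-1)}(\Omega)$.
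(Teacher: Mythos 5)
Your proof is correct and follows essentially the same route as the paper: the identical rewriting of the Dirichlet form via $(u,v)_{H_{X,0}^{1}(\Omega)}$ and the definition of $K_{3}(u)$ by Riesz representation. The only difference is that the paper disposes of the compactness of $K_{3}$ by citing Proposition \ref{prop2-8} (applied to $f_{0}(x,u)=-B|u|^{p-2}u-u$, whose growth exponent $p-1<\frac{\tilde{\nu}+2}{\tilde{\nu}-2}$ is admissible), whereas you re-derive that same argument directly from the compact embedding of Proposition \ref{prop2-3} and the Nemytskii continuity of Proposition \ref{prop2-7}, which is a presentational rather than substantive difference.
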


\begin{proof}
  Since
  \[\begin{aligned}
  (\widetilde{I_p'}(u),v)_{H_{X,0}^{1}(\Omega)}
  &=\int_{\Omega}Xu\cdot Xvdx-B\int_{\Omega}|u|^{p-2}uvdx\\
  &=(u,v)_{H_{X,0}^{1}(\Omega)}-B\int_{\Omega}|u|^{p-2}uvdx-\int_{\Omega}uvdx ~~~~\forall v\in H_{X,0}^{1}(\Omega),
  \end{aligned}  \]
  we  have
\[ \widetilde{I_p'}(u)=u+K_{3}(u)~~~~\forall u\in H_{X,0}^{1}(\Omega),\]
  where $K_{3}(u)$ satisfies
\[ (K_{3}(u),v)_{H_{X,0}^{1}(\Omega)}:=-B\int_{\Omega}|u|^{p-2}uvdx-\int_{\Omega}uvdx~~~~\forall v\in H_{X,0}^{1}(\Omega).\]
Moreover, it follows from Proposition \ref{prop2-8} that the operator $K_{3}:H_{X,0}^{1}(\Omega)\to H_{X,0}^{1}(\Omega)$ maps the bounded sets to the relatively compact sets.
\end{proof}
Additionally, the functional $I_{p}$ satisfies the following Palais-Smale type properties proposed in \cite{Tanaka1989}.
\begin{proposition}
\label{prop4-3}
If for some $M>0$, the sequence $\{u_{j}\}_{j=1}^{\infty}\subset H_{X,0}^{1}(\Omega)$ satisfies $I_{p}(u_{j})\leq M$ for all $j\geq 1$ and $\|DI_{p}(u_{j})\|_{H_{X}^{-1}(\Omega)}\to 0$ as $j\to \infty$, then $\{u_{j}\}_{j=1}^{\infty}$ has a subsequence which is convergent
in $H_{X,0}^{1}(\Omega)$.
\end{proposition}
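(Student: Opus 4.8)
The plan is to follow the classical Rabinowitz-type $(PS)$ argument adapted to the subelliptic setting, exploiting the precise structure of $I_p$ and the compactness available through Proposition \ref{prop4-2}. First I would establish the boundedness of $\{u_j\}_{j=1}^{\infty}$ in $H_{X,0}^1(\Omega)$. For large $j$ we have $\|DI_p(u_j)\|_{H_X^{-1}(\Omega)}<1$, so for any $\rho>0$,
\[
M+\rho\|u_j\|_{H_{X,0}^1(\Omega)}\geq I_p(u_j)-\rho\langle DI_p(u_j),u_j\rangle
=\left(\tfrac12-\rho\right)\int_\Omega|Xu_j|^2dx-B\left(\tfrac1p-\rho\right)\int_\Omega|u_j|^pdx.
\]
Choosing $\rho$ with $\tfrac1p<\rho<\tfrac12$ (possible since $p>2$) makes the coefficient of $\int_\Omega|Xu_j|^2dx$ positive and that of $\int_\Omega|u_j|^pdx$ negative, hence
\[
M+\rho\|u_j\|_{H_{X,0}^1(\Omega)}\geq \left(\tfrac12-\rho\right)\int_\Omega|Xu_j|^2dx
\geq \left(\tfrac12-\rho\right)\frac{\lambda_1}{1+\lambda_1}\|u_j\|_{H_{X,0}^1(\Omega)}^2,
\]
using Proposition \ref{Poincare}. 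This quadratic-versus-linear inequality forces $\{u_j\}$ to be bounded in $H_{X,0}^1(\Omega)$.

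Next I would extract a convergent subsequence. Since $H_{X,0}^1(\Omega)$ is a Hilbert space and $\{u_j\}$ is bounded, pass to a subsequence (still denoted $u_j$) with $u_j\rightharpoonup u_0$ weakly in $H_{X,0}^1(\Omega)$. Now invoke the decomposition from Proposition \ref{prop4-2}: writing $\widetilde{I_p'}(u_j)=u_j+K_3(u_j)$, the Riesz identity \eqref{4-9} together with $\|DI_p(u_j)\|_{H_X^{-1}(\Omega)}\to 0$ gives $\widetilde{I_p'}(u_j)\to 0$ strongly in $H_{X,0}^1(\Omega)$, so
\[
u_j=\widetilde{I_p'}(u_j)-K_3(u_j)\longrightarrow -\lim_{j\to\infty}K_3(u_j)\quad\text{in }H_{X,0}^1(\Omega),
\]
provided the limit on the right exists. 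Because $\{u_j\}$ is bounded and $K_3$ maps bounded sets to relatively compact sets (Proposition \ref{prop4-2}), after a further subsequence $K_3(u_j)$ converges strongly in $H_{X,0}^1(\Omega)$; hence $u_j$ converges strongly in $H_{X,0}^1(\Omega)$, which is the desired conclusion.

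The only delicate point — and the step I expect to require the most care — is making sure the compactness of $K_3$ is genuinely applicable, i.e.\ that the subelliptic Sobolev embedding underlying Proposition \ref{prop2-8} (and hence Proposition \ref{prop4-2}) is the compact one. This is where the hypothesis $2<p<2^*_{\tilde\nu}$ is essential: the map $u\mapsto |u|^{p-2}u$ is continuous from $L^p(\Omega)$ into $L^{p/(p-1)}(\Omega)$, and the embedding $H_{X,0}^1(\Omega)\hookrightarrow L^p(\Omega)$ is compact by Proposition \ref{prop2-3} precisely because $p<2^*_{\tilde\nu}$; combining these gives that $K_3$ is completely continuous. Once this is in place the rest is the standard ``bounded $+$ $I-$compact gradient $\Rightarrow$ strong convergence'' mechanism, and no further subelliptic subtlety enters.
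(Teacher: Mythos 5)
Your proposal is correct and follows essentially the same route as the paper: boundedness from the standard estimate on $I_p(u_j)-\rho\langle DI_p(u_j),u_j\rangle$ with $\tfrac1p<\rho<\tfrac12$ together with the Friedrichs--Poincar\'e inequality (the paper simply refers back to the analogous computations in \eqref{3-33} and \eqref{3-35}), and then strong convergence of a subsequence via the decomposition $u_j=\widetilde{I_p'}(u_j)-K_3(u_j)$ of Proposition \ref{prop4-2}, with $\widetilde{I_p'}(u_j)\to 0$ by \eqref{4-9} and $K_3$ compact on bounded sets. Your closing remark on why $K_3$ is compact (compact embedding $H_{X,0}^1(\Omega)\hookrightarrow L^p(\Omega)$ for $p<2^*_{\tilde\nu}$) is exactly the content already packaged in Propositions \ref{prop2-8} and \ref{prop4-2}, so nothing further is needed.
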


\begin{proof}
By the similar calculations in \eqref{3-33} and \eqref{3-35} above, we can deduce that $\{u_j\}_{j=1}^{\infty}$ is bounded in $H_{X,0}^{1}(\Omega)$. According to Proposition \ref{prop4-2}, we have
\begin{equation}
u_j=\widetilde{I_p'}(u_j)-K_{3}(u_j),
\end{equation}
where $K_{3}:H_{X,0}^{1}(\Omega)\to H_{X,0}^{1}(\Omega)$ maps the bounded sets to the relatively compact sets.
Besides, \eqref{4-9} indicates that $\|\widetilde{I_p'}(u_j)\|_{H_{X,0}^{1}(\Omega)}\to 0$ as $j\to+\infty$. Therefore, we can find a subsequence of $\{u_{j}\}_{j=1}^{\infty}$ which is convergent  in $H_{X,0}^{1}(\Omega)$.
  \end{proof}

\begin{proposition}
\label{prop4-4}
For $m\geq 1$, let $V_{m}$ be the $m$-dimensional subspace of $H_{X,0}^{1}(\Omega)$ and $V_{m}^{*}$ be the dual space of $V_{m}$. If the sequence $\{u_{j}\}_{j=1}^{\infty}\subset V_{m}$ satisfies
\begin{enumerate}
  \item [(1)]$I_{p}(u_{j})\leq M$ for all $j\geq 1$ and some $M>0$;
  \item  [(2)] $\|D(I_{p}|_{V_{m}})(u_{j})\|_{V_{m}^{*}}\to 0$ as $j\to \infty$,
\end{enumerate}
then $\{u_{j}\}_{j=1}^{\infty}$ has a subsequence which is convergent
in $V_{m}$.
\end{proposition}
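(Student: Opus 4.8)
The plan is to reduce the statement to the elementary finite-dimensional fact that a bounded sequence in the $m$-dimensional space $V_m$ has a convergent subsequence; the actual content is therefore to show that hypotheses (1) and (2) \emph{together} force $\{u_j\}_{j=1}^\infty$ to be bounded in $H_{X,0}^1(\Omega)$ (equivalently, in $V_m$ with the inherited norm). Note that neither hypothesis alone suffices: on $V_m$ the sublevel set $\{I_p\leq M\}$ is unbounded, since the term $-\frac{B}{p}\int_\Omega|u|^p dx$ dominates for large $u$; it is the coupling with the smallness of the restricted derivative that produces compactness. Observe also that, unlike in Proposition \ref{prop4-3}, here one does not need the decomposition of $\widetilde{I_p'}$ into identity plus compact perturbation from Proposition \ref{prop4-2}: boundedness alone closes the argument.

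For the boundedness I would repeat the Ambrosetti--Rabinowitz type manipulation already used in the proof of Lemma \ref{lemma3-5} (and invoked in Proposition \ref{prop4-3}). Since $u_j\in V_m$ and $I_p|_{V_m}$ is the restriction of $I_p$, the chain rule gives $\langle D(I_p|_{V_m})(u_j),v\rangle=\langle DI_p(u_j),v\rangle$ for every $v\in V_m$; taking $v=u_j$ in \eqref{4-3} yields
\[ \langle D(I_p|_{V_m})(u_j),u_j\rangle=\int_{\Omega}|Xu_j|^2dx-B\int_{\Omega}|u_j|^pdx, \]
hence
\[ I_p(u_j)-\frac1p\langle D(I_p|_{V_m})(u_j),u_j\rangle=\Big(\frac12-\frac1p\Big)\int_{\Omega}|Xu_j|^2dx. \]
Using $p>2$, the Friedrichs--Poincar\'e inequality in the form $\int_{\Omega}|Xu|^2dx\geq\frac{\lambda_1}{1+\lambda_1}\|u\|_{H_{X,0}^1(\Omega)}^2$ (cf.\ \eqref{3-40}), the estimate $|\langle D(I_p|_{V_m})(u_j),u_j\rangle|\leq\|D(I_p|_{V_m})(u_j)\|_{V_m^*}\|u_j\|_{H_{X,0}^1(\Omega)}$, and hypotheses (1)--(2), I obtain for all $j$ large enough that $\|D(I_p|_{V_m})(u_j)\|_{V_m^*}\leq 1$,
\[ \Big(\frac12-\frac1p\Big)\frac{\lambda_1}{1+\lambda_1}\|u_j\|_{H_{X,0}^1(\Omega)}^2\leq M+\frac1p\|u_j\|_{H_{X,0}^1(\Omega)}. \]
This is a quadratic inequality in $\|u_j\|_{H_{X,0}^1(\Omega)}$ with positive leading coefficient, so it forces $\sup_j\|u_j\|_{H_{X,0}^1(\Omega)}<+\infty$.

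Finally, since $V_m$ is finite-dimensional, the bounded sequence $\{u_j\}_{j=1}^\infty\subset V_m$ has a subsequence converging in $V_m$ (and \emph{a fortiori} in $H_{X,0}^1(\Omega)$) by the Heine--Borel theorem, which is the assertion. I do not expect a genuine obstacle here; the only mildly delicate point is bookkeeping, namely that the duality pairing between $V_m^*$ and $V_m$ must be the one induced by the $H_{X,0}^1(\Omega)$-norm on $V_m$, consistently with the way $I_p|_{V_m}$ and its derivative are defined, after which the estimates above hold verbatim. In short, this proposition is the finite-dimensional shadow of Proposition \ref{prop4-3}, and its only downstream role is to permit a deformation/min-max argument on the finite-dimensional approximations $V_m$.
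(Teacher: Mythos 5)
Your proposal is correct and follows essentially the same route as the paper: identify $\langle D(I_p|_{V_m})(u_j),v\rangle=\langle DI_p(u_j),v\rangle$ for $v\in V_m$, use this together with $I_p(u_j)\leq M$ in an Ambrosetti--Rabinowitz type estimate to get boundedness in the $H_{X,0}^1(\Omega)$-norm, and conclude by finite-dimensionality of $V_m$. The only difference is cosmetic: the paper refers back to the calculations in \eqref{3-33} and \eqref{3-35}, whereas you carry out the (simpler, explicit) identity $I_p(u_j)-\tfrac1p\langle DI_p(u_j),u_j\rangle=(\tfrac12-\tfrac1p)\int_\Omega|Xu_j|^2dx$ directly, which is fine.
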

\begin{proof}
For any $v\in V_{m}$, we have
\[ \langle D(I_{p}|_{V_{m}})(u_{j}),v\rangle=\langle DI_{p}(u_{j}),v\rangle, \]
which gives
\[ |\langle DI_{p}(u_{j}),u_{j}\rangle|\leq \|D(I_{p}|_{V_{m}})(u_{j})\|_{V_{m}^{*}}\|u_{j}\|_{V_{m}}=\|D(I_{p}|_{V_{m}})(u_{j})\|_{V_{m}^{*}}\|u_{j}\|_{H_{X,0}^{1}(\Omega)}.\]
Combining this fact and using the similar calculations in \eqref{3-33} and \eqref{3-35} above, we can also deduce that $\{u_j\}_{j=1}^{\infty}$ is bounded in $V_{m}$. Since $V_{m}$ is a finite dimensional space, it follows that $\{u_{j}\}_{j=1}^{\infty}$ possesses a subsequence which is convergent
in $V_{m}$.
\end{proof}

\begin{proposition}
\label{prop4-5}
For each $j\geq 1$, let $V_{j}$ be the $j$-dimensional subspace of $H_{X,0}^{1}(\Omega)$ and $V_{j}^{*}$ be the dual space of $V_{j}$. If the sequence $\{u_{j}\}_{j=1}^{\infty}$ satisfies
\begin{enumerate}
  \item [(1)] $u_{j}\in V_{j}$ for all $j\geq 1$;
  \item [(2)] $I_{p}(u_{j})\leq M$ for all $j\geq 1$ and some $M>0$;
  \item [(3)] $\|D(I_{p}|_{V_{j}})(u_{j})\|_{V_{j}^{*}}\to 0$ as $j\to \infty$,
\end{enumerate}
then $\{u_{j}\}_{j=1}^{\infty}$ has a subsequence which is convergent
in $H_{X,0}^{1}(\Omega)$.
\end{proposition}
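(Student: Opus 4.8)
The plan is to mimic the argument behind Proposition \ref{prop4-3}, but to cope with the fact that here $DI_p(u_j)$ need \emph{not} be small in $H_X^{-1}(\Omega)$ --- only its restriction $D(I_p|_{V_j})(u_j)$ is --- by exploiting the orthogonal projections onto the $V_j$ together with the (pre)compactness of the map $K_3$ from Proposition \ref{prop4-2}. Throughout I use that the $V_j$ are nested with $\overline{\bigcup_{j\ge1}V_j}=H_{X,0}^1(\Omega)$, which is how these subspaces arise in the Morse-index minimax scheme (the natural choice is $V_j=\mathrm{span}\{\varphi_1,\dots,\varphi_j\}$, with $\{\varphi_k\}$ the Dirichlet eigenfunctions of $-\triangle_X$ from Proposition \ref{prop2-4}).

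\emph{Step 1: boundedness of $\{u_j\}$.} As in \eqref{3-33}--\eqref{3-35}, but much simpler since here $g\equiv0$ and $f(x,u)=B|u|^{p-2}u$, I compute, using $u_j\in V_j$ so that $\langle DI_p(u_j),u_j\rangle=\langle D(I_p|_{V_j})(u_j),u_j\rangle$,
\[
\Big(\tfrac12-\tfrac1p\Big)\int_\Omega|Xu_j|^2\,dx
= I_p(u_j)-\tfrac1p\langle DI_p(u_j),u_j\rangle
\le M+\tfrac1p\,\|D(I_p|_{V_j})(u_j)\|_{V_j^*}\,\|u_j\|_{H_{X,0}^1(\Omega)}.
\]
Since $p>2$, combining this with the Friedrichs--Poincar\'e inequality \eqref{2-5} and hypothesis (3) gives $C\|u_j\|_{H_{X,0}^1(\Omega)}^2\le M+o(1)\|u_j\|_{H_{X,0}^1(\Omega)}$, hence $\{u_j\}$ is bounded in $H_{X,0}^1(\Omega)$.

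\emph{Step 2: reduction via $\widetilde{I_p'}$.} By Proposition \ref{prop4-2}, $\widetilde{I_p'}(u_j)=u_j+K_3(u_j)$ with $K_3$ sending bounded sets to relatively compact sets, so along a subsequence $K_3(u_j)\to w$ in $H_{X,0}^1(\Omega)$; it then suffices to show $\widetilde{I_p'}(u_j)\to0$ along that subsequence, since then $u_j=\widetilde{I_p'}(u_j)-K_3(u_j)\to -w$. Let $P_j$ be the orthogonal projection of $H_{X,0}^1(\Omega)$ onto $V_j$ and split $\widetilde{I_p'}(u_j)=P_j\widetilde{I_p'}(u_j)+(I-P_j)\widetilde{I_p'}(u_j)$. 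For the first piece, testing $(\widetilde{I_p'}(u_j),v)_{H_{X,0}^1(\Omega)}=\langle DI_p(u_j),v\rangle$ with $v=P_j\widetilde{I_p'}(u_j)\in V_j$ and using $\langle DI_p(u_j),v\rangle=\langle D(I_p|_{V_j})(u_j),v\rangle$ for $v\in V_j$ yields $\|P_j\widetilde{I_p'}(u_j)\|_{H_{X,0}^1(\Omega)}^2\le\|D(I_p|_{V_j})(u_j)\|_{V_j^*}\|P_j\widetilde{I_p'}(u_j)\|_{H_{X,0}^1(\Omega)}$, hence $\|P_j\widetilde{I_p'}(u_j)\|_{H_{X,0}^1(\Omega)}\le\|D(I_p|_{V_j})(u_j)\|_{V_j^*}\to0$. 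For the second piece, since $u_j\in V_j$ we have $(I-P_j)u_j=0$, so $(I-P_j)\widetilde{I_p'}(u_j)=(I-P_j)K_3(u_j)=(I-P_j)(K_3(u_j)-w)+(I-P_j)w$; the first summand has norm $\le\|K_3(u_j)-w\|_{H_{X,0}^1(\Omega)}\to0$, and the second tends to $0$ because $P_jw\to w$ by density of $\bigcup_j V_j$. Thus $\widetilde{I_p'}(u_j)\to0$ and $u_j\to -w$ in $H_{X,0}^1(\Omega)$.

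\emph{Main obstacle.} The only genuinely delicate point --- and what distinguishes this statement from Proposition \ref{prop4-3} --- is the control of $(I-P_j)\widetilde{I_p'}(u_j)$: one must notice that the failure of $DI_p(u_j)$ to be small off $V_j$ is harmless precisely because $u_j$ itself lies in $V_j$, so that the ``bad'' component of $\widetilde{I_p'}(u_j)$ equals $(I-P_j)K_3(u_j)$, which collapses thanks to the (pre)compactness of $K_3$ plus $P_jw\to w$. Everything else --- the $p$-superlinear boundedness estimate and the continuity/compactness of $K_3$ --- is routine and already supplied by Propositions \ref{prop4-1}--\ref{prop4-2} and the computations in Section \ref{Section3}.
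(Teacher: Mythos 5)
Your proposal is correct and follows essentially the same route as the paper's proof: boundedness from testing the restricted derivative against $u_j\in V_j$, the projection estimate $\|P_j\widetilde{I_p'}(u_j)\|_{H_{X,0}^{1}(\Omega)}\leq\|D(I_{p}|_{V_{j}})(u_{j})\|_{V_{j}^{*}}\to 0$, and the compactness of $K_{3}$ from Proposition \ref{prop4-2} to extract a convergent subsequence. The paper arranges the final step by applying $P_j$ to $u_j=\widetilde{I_p'}(u_j)-K_{3}(u_j)$ and asserting $P_jK_{3}(v_j)\to K_{3}(v_0)$, which implicitly uses the same strong convergence $P_j\to I$ (density of $\bigcup_j V_j$) that you state explicitly, so your rearrangement is only a cosmetic variant.
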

\begin{proof}
Observing that for any $v\in V_{j}$, we have
\begin{equation}\label{4-12}
 \langle D(I_{p}|_{V_{j}})(u_{j}),v\rangle=\langle DI_{p}(u_{j}),v\rangle,
\end{equation}
which means
\begin{equation}\label{4-13}
  |\langle DI_{p}(u_{j}),u_{j}\rangle|\leq \|D(I_{p}|_{V_{j}})(u_{j})\|_{V_{j}^{*}}\|u_{j}\|_{V_{j}}=\|D(I_{p}|_{V_{j}})(u_{j})\|_{V_{j}^{*}}\|u_{j}\|_{H_{X,0}^{1}(\Omega)}.
\end{equation}
Using \eqref{4-12} and the similar calculations in \eqref{3-33} and \eqref{3-35} above, it follows that $\{u_j\}_{j=1}^{\infty}$ is bounded in $H_{X,0}^{1}(\Omega)$. On the other hand, \eqref{4-9} gives
\begin{equation}\label{4-14}
\begin{aligned}
 \langle DI_{p}(u_{j}),v\rangle&=(\widetilde{I_p'}(u_j),v)_{H_{X,0}^{1}(\Omega)}\\
 &=(\widetilde{I_p'}(u_j),P_{j}v)_{H_{X,0}^{1}(\Omega)}\\
 &=(P_{j}\widetilde{I_p'}(u_j),v)_{H_{X,0}^{1}(\Omega)}\quad\forall v\in V_{j}\subset H_{X,0}^{1}(\Omega),
\end{aligned}
\end{equation}
where $P_{j}$ is the projection from $H_{X,0}^{1}(\Omega)$ to $V_j$. Now, substituting $v=P_{j}\widetilde{I_p'}(u_j)\in V_{j}$ in \eqref{4-14}. It derives from \eqref{4-12} and \eqref{4-14} that
\begin{equation}\label{4-15}
\begin{aligned}
  (P_{j}\widetilde{I_p'}(u_j),P_{j}\widetilde{I_p'}(u_j))_{H_{X,0}^{1}(\Omega)}&=|\langle DI_{p}(u_{j}),P_{j}\widetilde{I_p'}(u_j)\rangle|\\
  &=|\langle D(I_{p}|_{V_{j}})(u_{j}),P_{j}\widetilde{I_p'}(u_j)\rangle|\\
  &\leq \|D(I_{p}|_{V_{j}})(u_{j})\|_{V_{j}^{*}}\|P_{j}\widetilde{I_p'}(u_j)\|_{H_{X,0}^{1}(\Omega)},
\end{aligned}
\end{equation}
which implies $P_{j}\widetilde{I_p'}(u_j)\to 0$ in $H_{X,0}^{1}(\Omega)$ as $j\to +\infty$. Furthermore, Proposition \ref{prop4-2} yields
\[
u_j=\widetilde{I_p'}(u_j)-K_{3}(u_j),\]
where $K_{3}:H_{X,0}^{1}(\Omega)\to H_{X,0}^{1}(\Omega)$ maps the bounded sets to the relatively compact sets. That means
\begin{equation}\label{4-16}
  u_j=P_{j}\widetilde{I_p'}(u_j)-P_{j}K_{3}(u_j)~~~~\forall j\geq 1.
\end{equation}
The boundedness of $\{u_{n}\}_{n=1}^{\infty
}$ allows us to find a subsequence $\{v_{j}\}_{j=1}^{\infty}\subset \{u_{n}\}_{n=1}^{\infty
}$ such that $v_{j}\rightharpoonup v_{0}$ weakly in $H_{X,0}^{1}(\Omega)$ as $j\to +\infty$. Therefore, $P_{j}K_{3}(v_j)\to  K_{3}(v_0)$ in $H_{X,0}^{1}(\Omega)$ as $j\to +\infty$, which leads
\[ v_{j}=P_{j}\widetilde{I_p'}(v_j)-P_{j}K_{3}(v_j)\to -K_{3}(v_0) ~~\mbox{in}~~~H_{X,0}^{1}(\Omega)~~\mbox{as}~~~j\to+\infty. \]

\end{proof}

We next define the Morse index and augmented Morse index on the critical points of $I_{p}$.
\begin{definition}
 Let $u\in H_{X,0}^{1}(\Omega)$ be a critical point of $I_{p}$. Denote by
 $V$ the subspace of $ H_{X,0}^{1}(\Omega)$.
The augmented Morse index $m^*(u)$ at $u$ is defined by
 \[m^*(u):=\max\{{\rm dim} V|~ \langle I''_{p}(u)v,v\rangle\leq 0,~~\forall v\in V\}.\]
In particular,  the  Morse index $m(u)$ at $u$ is given by
 \[m(u):=\max\{{\rm dim} V|~ \langle I''_{p}(u)v,v\rangle< 0,~~\forall v\in V\setminus\{0\}\}.\]

\end{definition}

The augmented Morse indices admit the following estimate.
\begin{proposition}
\label{prop4-6}
For each $k\geq 1$, let $e_{k}:=\inf\limits_{h\in G_k}\max\limits_{u\in D_k}I_{p}(h(u))$. Then there exists $v_k\in H_{X,0}^{1}(\Omega)$ such that
\begin{equation}\label{4-17}
 I_p(v_k)\leq e_{k},\qquad DI_p(v_k)=0.
\end{equation}
Moreover
\begin{equation}\label{4-18}
 m^*(v_k)\geq k~~~~~\forall k\geq 1,
\end{equation}
where $m^*(v_k)$ is the augmented Morse index at $v_k$ related to $I_{p}$.

\end{proposition}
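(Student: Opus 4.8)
The plan is to use the symmetric minimax machinery of $I_p$ (which, unlike $E$ and $E_1$, is exactly even and has no perturbation term) together with the standard relationship between minimax values of the form $e_k$ and the augmented Morse index of the critical point they produce. First I would recall that $I_p$ satisfies the relevant compactness conditions: by Proposition \ref{prop4-3} (and Propositions \ref{prop4-4}, \ref{prop4-5}) any suitable Palais--Smale type sequence for $I_p$ has a convergent subsequence, and by Proposition \ref{prop4-2} the gradient $\widetilde{I_p'}$ is a compact perturbation of the identity. Combined with the fact that each $D_k = B_{R_k}\cap W_k$ is compact in a finite-dimensional space, that $\mathbf{id}\in G_k$, and that $I_p(u)\to -\infty$ on finite-dimensional subspaces as $\|u\|\to\infty$ (same computation as \eqref{3-45} with $g\equiv 0$, $f(x,u)=B|u|^{p-2}u$, using $p>2$), the value $e_k = \inf_{h\in G_k}\max_{u\in D_k} I_p(h(u))$ is finite and is a critical value of $I_p$; this is the symmetric mountain pass / Clark-type theorem. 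Hence there is $v_k$ with $DI_p(v_k)=0$ and $I_p(v_k)\le e_k$, which is \eqref{4-17}.

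The heart of the matter is the Morse index bound \eqref{4-18}. The strategy is the classical argument (going back to the work surrounding the symmetric mountain pass theorem, and in the perturbation-from-symmetry literature cf. \cite{Bahri1988,Tanaka1989}) producing, at a minimax level defined over an odd class of maps on a $k$-dimensional ball, a critical point whose augmented Morse index is at least $k$. Concretely, suppose for contradiction that $m^*(v_k)\le k-1$, i.e. there is no $k$-dimensional subspace on which the quadratic form $\langle I_p''(v_k)\cdot,\cdot\rangle$ is nonpositive; equivalently the negative-plus-null space $V^-\oplus V^0$ of $I_p''(v_k)$ has dimension at most $k-1$. One then performs a gradient-type deformation that pushes down the sublevel set near $v_k$ in the directions where $I_p''(v_k)$ is positive while leaving the low-dimensional "bad" directions controlled; using that this bad subspace has dimension $\le k-1 < k = \dim D_k$, a Borsuk--Ulam / intersection argument (in the spirit of \cite[Lemma~1.44]{Rabinowitz1982}, as already invoked in Proposition \ref{prop3-4}) shows that any odd map $h\in G_k$ can be deformed so that $\max_{D_k} I_p(h(\cdot)) < e_k$, contradicting the definition of $e_k$. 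The compactness supplied by Propositions \ref{prop4-2}--\ref{prop4-3} is what makes the deformation legitimate near the (possibly degenerate) critical point $v_k$, and the finite-dimensional reduction built into $G_k$ and $D_k$ is what lets the topological degree argument run.

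The main obstacle, and the step I would spend the most care on, is making the local deformation near $v_k$ both (i) compatible with the oddness constraint defining $G_k$ and the boundary constraint $h=\mathbf{id}$ on $\partial B_{R_k}\cap W_k$, and (ii) robust against the degeneracy of $I_p''(v_k)$ — since we only control the augmented (not the strict) Morse index, the null space must be absorbed into the "bad" block, and one must verify that after restricting to a small sphere around $v_k$ the functional genuinely decreases off this block. This is handled by a quantitative second-order Taylor expansion of $I_p$ at $v_k$ (here $I_p\in C^2$ by Proposition \ref{prop4-1} is essential) together with the spectral decomposition of $I_p''(v_k)$, which is a compact perturbation of the identity by Proposition \ref{prop4-2} and hence has discrete spectrum with finitely many nonpositive eigenvalues; the dimension of the span of those eigenspaces is exactly $m^*(v_k)$, and requiring it to be $<k$ is precisely what yields the contradiction. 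Once this local picture is glued to the global deformation lemma (Lemma \ref{lemma3-5}-style (PS) on the relevant energy band, plus \cite[Theorem~A.4]{Rabinowitz1986}), the inequality $m^*(v_k)\ge k$ follows, completing the proof.
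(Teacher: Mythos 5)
The paper's own proof is a one-liner: after checking that $I_p$ is even with $I_p(0)=0$, it verifies that Propositions \ref{prop4-1}--\ref{prop4-5} supply exactly the hypotheses of Tanaka's Theorem B in \cite{Tanaka1989} ($C^2$ regularity, gradient of the form identity plus compact, and the three Palais--Smale-type conditions, including the ones for the restrictions $I_p|_{V_j}$), and then both \eqref{4-17} and \eqref{4-18} are quoted from that theorem. Your first paragraph (existence of a critical point $v_k$ with $I_p(v_k)\le e_k$ via the symmetric minimax scheme and the (PS) property) is fine and consistent with this. But your proof of the Morse index bound \eqref{4-18} attempts to redo the content of Tanaka's theorem by a direct deformation argument in $H_{X,0}^1(\Omega)$, and as sketched it has a genuine gap.

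Two points. First, the contradiction is set up backwards: you fix the critical point $v_k$ and assume $m^*(v_k)\le k-1$, but the statement only asserts that \emph{some} critical point at level $\le e_k$ has large augmented index. The correct contradiction hypothesis is that \emph{every} critical point in the relevant sublevel/level band has $m^*\le k-1$, and the deformation must then avoid the whole critical set, which may be infinite or a continuum at level $e_k$; your local-deformation-glued-to-global-deformation sketch does not address this. Second, and more seriously, the local step near a \emph{degenerate} critical point cannot be justified by "a quantitative second-order Taylor expansion together with the spectral decomposition of $I_p''(v_k)$": on the null space of $I_p''(v_k)$ the quadratic form gives no information, higher-order terms dominate and may have either sign, so $I_p$ need not decrease off the $(k-1)$-dimensional "bad" block, and the Borsuk--Ulam intersection argument does not close. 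Handling exactly this degeneracy is why Tanaka's proof works through finite-dimensional (Galerkin) approximations $I_p|_{V_j}$ with the (PS)$^*$-type conditions of Propositions \ref{prop4-4} and \ref{prop4-5} (or, in other treatments, a Marino--Prodi perturbation to remove degeneracy), and why the conclusion is only $I_p(v_k)\le e_k$ rather than $I_p(v_k)=e_k$. You list these propositions as available compactness tools but never actually use them where they are needed; without that (or an equivalent nondegeneracy/perturbation device), the key step of your argument does not go through, whereas simply invoking \cite[Theorem B]{Tanaka1989}, as the paper does, would.
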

\begin{proof}
According to \eqref{4-1}, we can verify that $I_{p}(0)=0$ and $I_{p}(-u)=I_{p}(u)$ for all $u\in H_{X,0}^{1}(\Omega)$.
 Propositions \ref{prop4-1}-\ref{prop4-5} and \cite[Theorem B]{Tanaka1989} yield \eqref{4-17} and \eqref{4-18}.
\end{proof}
Owing to Proposition \ref{prop2-5}, Proposition \ref{CLR} and Proposition \ref{prop4-6}, we can derive the following lower bound of $b_{k}$.

\begin{proposition}
\label{prop4-7}
For the minimax values $b_{k}$, $e_{k}$  given in \eqref{3-46} and Proposition \ref{prop4-6}, we have
\begin{equation}
e_{k}\geq C_{1} k^{\frac{2p}{\tilde{\nu}(p-2)}}~~~~\forall k\geq 1,
\end{equation}
and
\begin{equation}
  b_{k}\geq C_{2}k^{\frac{2p}{\tilde{\nu}(p-2)}}~~~~\forall k\geq k_{0}\geq 1,
\end{equation}
where $C_{1},C_{2}$ are some positive constants and $k_{0}$ is a positive integer.

\end{proposition}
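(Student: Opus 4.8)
The plan is to establish the lower bound for $e_k$ first via the augmented Morse index estimate together with the degenerate Cwikel-Lieb-Rozenblum inequality, and then transfer this to $b_k$ by comparing the two minimax families. By Proposition \ref{prop4-6}, for each $k\geq 1$ there is a critical point $v_k$ of $I_p$ with $I_p(v_k)\leq e_k$ and augmented Morse index $m^*(v_k)\geq k$. The key observation is that at such a critical point, $I_p''(v_k)$ is precisely the quadratic form of the subelliptic Schrödinger operator $-\triangle_X + V_k$ with potential $V_k(x) = -(p-1)B|v_k(x)|^{p-2}$; indeed $\langle I_p''(v_k)h,h\rangle = \int_\Omega |Xh|^2 dx - (p-1)B\int_\Omega |v_k|^{p-2} h^2\,dx$ by Proposition \ref{prop4-1}. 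Since $m^*(v_k)\geq k$, the operator $-\triangle_X + V_k$ has at least $k$ non-positive Dirichlet eigenvalues, so $N(0,-\triangle_X+V_k) \geq k-1$ (or $\geq k$ up to a harmless shift; one perturbs $V_k$ slightly to make the inequality strict, using Proposition \ref{prop2-5}). Here one must check $V_k\in L^{p_1/2}(\Omega)$ with $p_1>\tilde\nu$: this needs $|v_k|^{p-2}\in L^{p_1/2}$, i.e. $v_k\in L^{(p-2)p_1/2}$, which holds by Corollary \ref{corollary2-1} provided $(p-2)p_1/2 < 2_{\tilde\nu}^* = 2\tilde\nu/(\tilde\nu-2)$; since $p<2_{\tilde\nu}^*$ one can choose $p_1$ in the admissible range $(\tilde\nu, \tfrac{2\cdot 2_{\tilde\nu}^*}{p-2})$, which is nonempty exactly because $p<2_{\tilde\nu}^*$.

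\textbf{Next I would} apply Proposition \ref{CLR} to $-\triangle_X+V_k$ (note $V_k\leq 0$), obtaining
\begin{equation}\label{plan-clr}
k-1 \leq N(0,-\triangle_X+V_k) \leq C\int_\Omega |V_k(x)|^{\tilde\nu/2}dx = C\bigl((p-1)B\bigr)^{\tilde\nu/2}\int_\Omega |v_k|^{(p-2)\tilde\nu/2}dx.
\end{equation}
By Hölder's inequality (since $(p-2)\tilde\nu/2 < p$, as this reduces again to $p<2_{\tilde\nu}^*$... more precisely $(p-2)\tilde\nu/2 < p \iff p\tilde\nu - 2\tilde\nu < 2p \iff p(\tilde\nu-2)<2\tilde\nu$) one bounds $\int_\Omega |v_k|^{(p-2)\tilde\nu/2}dx \leq C\|v_k\|_{L^p(\Omega)}^{(p-2)\tilde\nu/2}$. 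On the other hand, from $I_p(v_k)\leq e_k$ and $DI_p(v_k)=0$ the standard identity $I_p(v_k) = I_p(v_k) - \tfrac12\langle DI_p(v_k),v_k\rangle = (\tfrac12 - \tfrac1p)B\int_\Omega |v_k|^p dx$ gives $\|v_k\|_{L^p(\Omega)}^p \leq \frac{2p}{(p-2)B} e_k$ (one also needs $e_k\geq 0$, which follows since $I_p\geq 0$ near $0$ and the minimax class $G_k$ contains $\mathbf{id}$ together with a linking argument, or more simply from $e_k\geq b_k$-type lower bounds already available; alternatively $e_k > 0$ for $k$ large). Combining these with \eqref{plan-clr} yields $k-1 \leq C B^{\tilde\nu/2}\bigl(\tfrac{2p}{(p-2)B}e_k\bigr)^{(p-2)\tilde\nu/(2p)}$, and solving for $e_k$ gives $e_k \geq C_1 k^{\frac{2p}{\tilde\nu(p-2)}}$ for a constant $C_1>0$ independent of $k$ (absorbing all $B$-dependence into $C_1$, or choosing $B$ as in \cite{Tanaka1989}).

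\textbf{Finally,} to pass from $e_k$ to $b_k$ I would use assumption (H.2): $|f(x,u)|\leq C(1+|u|^{p-1})$ together with (H.4) $|g(x,u)|\leq \alpha(x)+\beta|u|^\sigma$ with $\sigma < \mu-1$; integrating, $F(x,u)\leq C(|u|+|u|^p)$ and $|G(x,u)|\leq C(|\alpha(x)||u| + |u|^{\sigma+1})$, so that there is a constant $B>0$ and a constant $C>0$ with
\begin{equation}\label{plan-comp}
E_1(u) \geq \frac{1}{2}\int_\Omega |Xu|^2 dx - \frac{B}{p}\int_\Omega |u|^p dx - C\bigl(\|u\|_{L^1(\Omega)} + \|u\|_{L^{\sigma+1}(\Omega)}^{\sigma+1} + 1\bigr) \geq I_p(u) - C\bigl(\|u\|_{H_{X,0}^1(\Omega)} + \|u\|_{H_{X,0}^1(\Omega)}^{\sigma+1} + 1\bigr),
\end{equation}
using the embeddings of Corollary \ref{corollary2-1} and that $\sigma+1 < \mu < p < 2_{\tilde\nu}^*$. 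Running the same minimax comparison on $\partial B_\rho \cap W_{k-1}^\perp$ as in \eqref{3-53}–\eqref{3-55} — but now with the explicit no-eigenvalue-estimate bound $e_k\geq C_1 k^{2p/(\tilde\nu(p-2))}$ in place of condition $(L)$ — one obtains $b_k \geq e_k - (\text{lower-order in }\|u\|) \geq C_2 k^{2p/(\tilde\nu(p-2))}$ for $k\geq k_0$, since the $\|u\|^{\sigma+1}$ and $\|u\|$ corrections grow strictly slower in $k$ than the leading power (this is where $\sigma+1<p$ is used). \textbf{The main obstacle} I anticipate is the interplay between the choice of the constant $B$ and the verification that $V_k\in L^{p_1/2}(\Omega)$ uniformly in $k$, together with making the strict inequality $N(0,-\triangle_X+V_k)\geq k$ rigorous from $m^*(v_k)\geq k$ — this requires a careful perturbation argument (slightly shrinking the potential and invoking continuity of eigenvalues from Proposition \ref{prop2-5}) so that the non-strict augmented-Morse-index count becomes a strict count of negative eigenvalues amenable to Proposition \ref{CLR}.
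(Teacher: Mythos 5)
Your derivation of the first estimate $e_k\geq C_1 k^{\frac{2p}{\tilde\nu(p-2)}}$ is essentially the paper's argument and is sound: identify $\langle I_p''(v_k)h,h\rangle$ with the quadratic form of $-\triangle_X-(p-1)B|v_k|^{p-2}$, shift the potential by $-\varepsilon$ so that the augmented Morse index bound $m^*(v_k)\geq k$ becomes a count of strictly negative eigenvalues, apply Proposition \ref{CLR}, let $\varepsilon\to 0$, and combine with $DI_p(v_k)=0$, $I_p(v_k)\leq e_k$ and H\"older (using $(p-2)\tilde\nu/2<p$). The $\varepsilon$-shift is exactly the paper's device (no continuity-of-eigenvalues argument is needed: if $\mathcal{Q}_0(u,u)\leq 0$ on a subspace then $\mathcal{Q}_\varepsilon(u,u)<0$ there), your integrability check for $V_k$ is a correct elaboration of what the paper states, and the worry about $e_k\geq 0$ is moot since $e_k\geq I_p(v_k)=(\frac12-\frac1p)B\int_\Omega|v_k|^p\,dx\geq 0$.

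The transfer from $e_k$ to $b_k$, however, has a genuine gap as written. The step $b_k\geq e_k-(\text{lower-order in }\|u\|)$ is not a legitimate comparison of minimax values: your correction $C(\|u\|_{H^1_{X,0}(\Omega)}+\|u\|_{H^1_{X,0}(\Omega)}^{\sigma+1}+1)$ is evaluated at the (unknown) maximizer of $h\in G_k$ on $D_k$, whose norm is not bounded uniformly in $h$ or in $k$, so "grows strictly slower in $k$" cannot be justified; moreover $\sigma+1$ may exceed $2$, so this term also cannot be absorbed into the gradient part of $I_p$. Appealing to the intersection argument of \eqref{3-53}--\eqref{3-55} is the wrong mechanism here: that argument bounds $b_k$ from below via $\inf_{\partial B_\rho\cap W_{k-1}^{\perp}}E_1$ and needs the eigenvalue bound $(L)$, which Theorem \ref{thm2} deliberately avoids, and the number $e_k$ (a minimax value over $G_k$, $D_k$) cannot be fed into it. The correct and simpler transfer, which is what the paper does in \eqref{4-27}, is purely pointwise: bound $\int_\Omega F\,dx+\psi(u)\int_\Omega G\,dx$ by $C\int_\Omega|u|^p dx+C\int_\Omega|u|dx+\int_\Omega|\alpha u|dx+C\int_\Omega|u|^{\sigma+1}dx$, absorb $\int_\Omega|\alpha u|dx$ into $\frac14\int_\Omega|Xu|^2dx$ plus a constant and absorb $\int_\Omega|u|dx$, $\int_\Omega|u|^{\sigma+1}dx$ into $\tilde C_1\int_\Omega|u|^pdx$ plus a constant by Young's pointwise inequality (using $1,\sigma+1<p$), obtaining $E_1(u)\geq\frac14\int_\Omega|Xu|^2dx-\tilde C_1\int_\Omega|u|^pdx-\hat C_1$ for all $u$; then choose $B=2p\tilde C_1$ so that $E_1(u)\geq\frac12 I_p(u)-\hat C_1$, and since $b_k$ and $e_k$ are defined over the identical classes $G_k$ and sets $D_k$, this yields at once $b_k\geq\frac12 e_k-\hat C_1\geq C_2 k^{\frac{2p}{\tilde\nu(p-2)}}$ for $k\geq k_0$. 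Your proposal has the right ingredients for this comparison, but the final assembly as stated would not go through and needs this repair.
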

\begin{proof}
For $k\geq 1$ and any $0\leq \varepsilon<1$, we let $V_{\varepsilon}:=-(p-1)B|v_k|^{p-2}-\varepsilon$, where $v_{k}$ is the critical point of $I_{p}$ given in Proposition \ref{prop4-6}. Since $|v_{k}|\in H_{X,0}^{1}(\Omega)$, by Corollary \ref{corollary2-1} we can verify  $V_{\varepsilon}\in L^{\frac{p_{1}}{2}}(\Omega)$ holds for some $p_{1}>\tilde{\nu}$ and all $0\leq \varepsilon<1$. It derives from
Proposition \ref{prop2-5} that the Dirichlet eigenvalue problems of subelliptic Schr\"{o}dinger operators $-\triangle_{X}+V_{\varepsilon}$ are well-defined for all $0\leq \varepsilon<1$. The quadratic form of $-\triangle_{X}+V_{\varepsilon}$ is given by
\begin{equation}
\begin{aligned}\label{4-21}
  \mathcal{Q}_{\varepsilon}(u,v):&=\int_{\Omega}Xu\cdot Xv dx+\int_{\Omega}V_{\varepsilon}uv dx\\
  &=  \int_{\Omega}Xu\cdot Xv dx-\int_{\Omega}((p-1)B|v_k|^{p-2}+\varepsilon)uv dx.
\end{aligned}
\end{equation}
Then for any $0<\varepsilon<1$, we have
\begin{equation}
\begin{aligned}\label{4-22}
   &N(0,-\triangle_{X}-(p-1)B|v_k|^{p-2}-\varepsilon)\\
   &=\max\{\text{dim} V| V\subset H_{X,0}^{1}(\Omega)~\text{is a subspace such that~}~ \mathcal{Q}_{\varepsilon}(u,u)<0~~\forall u\in V\setminus\{0\}\}.
\end{aligned}
\end{equation}
In particular, \eqref{4-2} and \eqref{4-21} give $ \mathcal{Q}_{0}(u,u)=\langle I''_{p}(v_{k})u,u\rangle$ for all $u\in H_{X,0}^{1}(\Omega)$. Thus
\begin{equation}
\label{4-23}
m^*(v_k)=\max\{\text{dim} V| V\subset H_{X,0}^{1}(\Omega)~\text{is a subspace such that~}~\mathcal{Q}_{0}(u,u)\leq 0~~\forall u\in V\}.
\end{equation}
Combining \eqref{4-22}, \eqref{4-23}, Proposition \ref{prop4-6} and Proposition \ref{CLR}, we have for any $k\geq 1$,
\begin{equation}\label{4-24}
 C\int_{\Omega}\left((p-1)B|v_{k}|^{p-2}+\varepsilon\right)^{\frac{\tilde{\nu}}{2}}dx \geq N(0,-\triangle_{X}-(p-1)
 B|v_k|^{p-2}-\varepsilon)
  \geq m^*(v_k) \geq k,
\end{equation}
where $C>0$ is a positive constant. Taking $\varepsilon \to 0$ in \eqref{4-24}, we get
\begin{equation}\label{4-25}
\int_{\Omega}|v_k|^{\frac{(p-2)\tilde{\nu}}{2}} dx \geq \widetilde{C}\cdot k,
\end{equation}
where $\widetilde{C}>0$ is a positive constant. Besides, $DI_p (v_k)=0$ yields $B\int_{\Omega} |v_k|^pdx=\int_{\Omega} |Xv_k|^2dx$. It follows from \eqref{4-17} and \eqref{4-25} that
\begin{equation}\label{4-26}
\begin{aligned}
e_k\geq I_p(v_k)&=\frac{1}{2}\int_{\Omega}|Xv_k|^2dx-\frac{B}{p}\int_{\Omega}|v_k|^p dx\\
&=B\left(\frac{1}{2}-\frac{1}{p}\right)\int_{\Omega}|v_k|^p dx\\
&\geq CB\left(\frac{1}{2}-\frac{1}{p}\right)\left(\int_{\Omega}|v_k|^{\frac{(p-2)\tilde{\nu}}{2}} dx \right)^{\frac{2p}{\tilde{\nu}(p-2)}}\\
& \geq C_1 k^{\frac{2p}{\tilde{\nu}(p-2)}}~~~\forall~k\geq 1,
\end{aligned}
\end{equation}
where $C_{1}>0$ is a positive constant.

On the other hand, using Young's inequality and Corollary \ref{corollary2-1}, we obtain for any $u\in H_{X,0}^{1}(\Omega)$,
\begin{equation}
\begin{aligned}\label{4-27}
E_1(u)&=\frac{1}{2}\int_{\Omega}|Xu|^2dx-\int_{\Omega}F(x,u)dx-\psi(u)\int_{\Omega}G(x,u) dx\\
&\geq \frac{1}{2}\int_{\Omega}|Xu|^2dx-C\int_\Omega|u|^pdx-C\int_\Omega|u|dx-\left(\int_{\Omega}|\alpha(x)u(x)|dx+\frac{\beta}{\sigma+1}\int_{\Omega}|u|^{\sigma+1}dx\right)\\
      &\geq \frac{1}{2}\int_{\Omega}|Xu|^2dx-\widetilde{C_{1}}\int_{\Omega}|u|^{p}dx-\|\alpha\|_{L^{\frac{2\tilde{\nu}}{\tilde{\nu}+2}}(\Omega)}\|u\|_{L^{\frac{2\tilde{\nu}}{\tilde{\nu}-2}}(\Omega)}-\widetilde{C_{1}}\\
       &\geq \frac{1}{2}\int_{\Omega}|Xu|^2dx-\widetilde{C_{1}}\int_{\Omega}|u|^{p}dx-\widetilde{C_{2}}\|\alpha\|_{L^{\frac{2\tilde{\nu}}{\tilde{\nu}+2}}(\Omega)}\left(\int_{\Omega}|Xu|^2dx\right)^{\frac{1}{2}}-\widetilde{C_{1}}\\
       &\geq \frac{1}{4}\int_{\Omega}|Xu|^2dx-\widetilde{C_{1}}\int_{\Omega}|u|^{p}dx-\widehat{C_{1}},\\
\end{aligned}
\end{equation}
where $C,\widetilde{C_{1}},\widehat{C_{1}},\widetilde{C_{2}}$ are some positive constants. Now, substituting $B=2p\widetilde{C_{1}}$, \eqref{4-27} gives that
\[ E_1(u)\geq \frac{1}{2}I_{p}(u)-\widehat{C_{1}}~~~~\forall u\in H_{X,0}^{1}(\Omega), \]
which indicates
\[ b_k=\inf\limits_{h\in G_k}\max\limits_{u\in D_k}E_1(h(u))\geq \frac{1}{2}\inf\limits_{h\in G_k}\max\limits_{u\in D_k}I_{p}(h(u))-\widehat{C_{1}}=\frac{1}{2}e_{k}-\widehat{C_{1}}~~~~\forall k\geq 1. \]
Consequently, we can conclude from \eqref{4-26} that
\[  b_{k}\geq C_{2}k^{\frac{2p}{\tilde{\nu}(p-2)}}\]
holds for all $k\geq k_{0}$, where $C_{2}>0$ is a positive constant and $k_{0}\geq 1$ is a positive integer.
\end{proof}

Finally, we finish the proof of Theorem \ref{thm2}.
\begin{proof}[Proof of Theorem \ref{thm2}]
According to the proof of Theorem \ref{thm1},
it is sufficient to verify the validity of \eqref{3-57} under the assumption $(A2)$ in Theorem \ref{thm2}. Replacing Proposition \ref{prop3-4} by Proposition \ref{prop4-7} in the proof of Proposition \ref{prop3-5}, we conclude from \eqref{3-65} that
\begin{equation}
 b_{k_{2}+l} \leq b_{k_{2}}\exp\left(2A \sum_{k=k_{2}}^{k_{2}+l-1} b_{k}^{\frac{\sigma+1-\mu}{\mu}}\right).
\end{equation}
The assumption $(A2)$ gives
 \[ \frac{2p}{\tilde{\nu}(p-2)}>\frac{\mu}{\mu-\sigma-1},\]
i.e.,
\[ \frac{2p}{\tilde{\nu}(p-2)}\cdot\frac{\sigma+1-\mu}{\mu}<-1. \]
Hence, there exists a positive constant $\widetilde{M_{2}}>0$ such that
\begin{equation}\label{4-29}
  \sum_{k=k_{2}}^{\infty} b_{k}^{\frac{\sigma+1-\mu}{\mu}}\leq C_{2}^{\frac{\sigma+1-\mu}{\mu}}\sum_{k=k_{2}}^{\infty} k^{\frac{2p}{\tilde{\nu}(p-2)}\cdot\frac{\sigma+1-\mu}{\mu}} \leq \widetilde{M_{2}}<+\infty
\end{equation}
holds for sufficiently large $k_{2}$. Therefore, by \eqref{3-65} and \eqref{4-29}, we can also find a positive constant $\widetilde{M}$ such that $b_{k}\leq \widetilde{M}$ for all $k\geq 1$.
\end{proof}

\section{An example}
\label{Section5}
In this section, we present a simple example in which the admissible range of index $p$ given by the inequality $(A1)$ in Theorem \ref{thm1} will be wider than the range of $p$ given by the inequality $(A2)$ in Theorem \ref{thm2} (In fact, we can find a lot of such examples in degenerate cases).

\begin{example}
\label{ex5-1}
Let $X=(\partial_{x_{1}},x_{1}\partial_{x_{2}})$ be the Grushin type vector fields defined on  $\mathbb{R}^2$.  The Grushin operator  generated by $X$ is given by
\[ \triangle_{X}:=\frac{\partial^{2}}{\partial x_{1}^{2}}+x_{1}^{2}\frac{\partial^{2}}{\partial x_{2}^{2}}. \]
Assume $\Omega\subset\mathbb{R}^2$ is an open bounded smooth domain such that $0\in\Omega$. Clearly, $X$ satisfy the H\"{o}rmander's condition but fail to satisfy the M\'etivier condition. In addition, $\tilde{\nu}=3>n=2$. Denoting by $\lambda_{k}$  the $k$-th Dirichlet eigenvalue of $-\triangle_{X}$ on $\Omega$, it follows from \cite[Example 8.4]{Chen2021} that
\begin{equation}\label{6-1}
  \lambda_{k}\geq  C\left(\frac{k}{\ln k} \right)^{\frac{2}{\tilde{\nu}-1}}=C\left(\frac{k}{\ln k} \right)
\end{equation}
holds for sufficiently large $k$. That means $\vartheta=2$ and $\kappa=1$ in the condition (L) of Theorem \ref{thm1}.

In this case, we consider the problem \eqref{problem1-1} with $f(x,u)=u|u|^{p-2}$ such that $f(x,u)$ satisfies assumptions (H.1)-(H.3) with $p=\mu>2$. Also, we let $g(x,u)=|u|^{\sigma}$ with $\sigma=\frac{3}{4}p-1$ such that $0\leq \sigma<\mu-1=p-1$. It derives that $g(x,u)$ satisfies assumption (H.4) and
\[ (\sigma+1)\tilde{\nu}=\frac{9p}{4}>2p=\vartheta \mu,   \]
which implies the admissible range of index $p$ given in the inequality  $(A1)$ is wider than that in the inequality $(A2)$.
\end{example}


\Acknowledgements{Hua Chen is supported by National Natural Science Foundation of China (Grant No. 12131017) and National Key R\&D Program of China (no. 2022YFA1005602). Hong-Ge Chen is supported by National Natural Science Foundation of China (Grant No. 12201607), Knowledge Innovation Program of Wuhan-Shuguang Project (Grant No. 2023010201020286), China Postdoctoral Science Foundation (Grant No. 2023T160655) and Hubei Province Postdoctoral Funding Project. Jin-Ning Li is supported by China National Postdoctoral Program for Innovative Talents (Grant No.  BX20230270).}





\end{document}